\documentclass[final,1p,times]{elsarticle}
\usepackage{amsmath,amssymb,amsthm,amscd,appendix}
\usepackage[matrix,arrow,curve]{xy}

\usepackage{graphicx}
\usepackage{comment}
\usepackage{listings}
\usepackage{mathtools, nccmath}

\usepackage{tikz}
\usetikzlibrary{arrows}

\usepackage{changepage}
\usepackage[colorlinks=true,linkcolor=blue]{hyperref}
\usepackage[noabbrev]{cleveref}
\usepackage{overpic}
\usepackage{contour}
\usepackage[normalem]{ulem}

\hoffset=-26mm \frenchspacing \emergencystretch=5pt \tolerance=400
\unitlength=1mm \textwidth=17cm

\newtheorem{formula}{}[section]
\newtheorem{proposition}[formula]{Proposition}
\newtheorem{corollary}[formula]{Corollary}
\newtheorem{lemma}[formula]{Lemma}
\crefname{lemma}{Lemma}{Lemmas}
\newtheorem{theorem}[formula]{Theorem}

\newtheorem*{theorem*}{Theorem}

\theoremstyle{definition}
\newtheorem{definition}[formula]{Definition}
\crefname{definition}{Def.}{Definitions}

\theoremstyle{remark}
\newtheorem{remark}[formula]{Remark}
\crefname{remark}{Remark}{Remarks}

\newtheorem*{problem*}{Problem}

\crefname{section}{Section}{sections}
\crefname{figure}{Figure}{figures}
\crefname{equation}{}{equations}
\crefname{appendix}{Appendix}{}


\begin{document}

\title{Flexible Kokotsakis Meshes with Skew Faces: Generalization of the Orthodiagonal Involutive Type}

\author{Alisher Aikyn}
\ead{alisher.aikyn@kaust.edu.sa}
\author{Yang Liu}
\author{Dmitry A.~Lyakhov}
\ead{dmitry.lyakhov@kaust.edu.sa}
\author{Florian Rist}
\ead{florian.rist@kaust.edu.sa}
\author{Helmut Pottmann}
\ead{helmut.pottmann@kaust.edu.sa}
\author{Dominik L.~Michels}
\ead{dominik.michels@kaust.edu.sa}




\address{KAUST Visual Computing Center}

\def\sgn{\mathrm{sgn}\,}
\def\bideg{\mathrm{bideg}\,}
\def\tdeg{\mathrm{tdeg}\,}
\def\sdeg{\mathrm{sdeg}\,}
\def\grad{\mathrm{grad}\,}
\def\ch{\mathrm{ch}\,}
\def\sh{\mathrm{sh}\,}
\def\th{\mathrm{th}\,}
\def\RZ{\mathbb{R}\mathcal{Z}}
\def\mod{\mathrm{mod}\,}
\def\In{\mathrm{In}\,}
\def\Im{\mathrm{Im}\,}
\def\Ker{\mathrm{Ker}\,}
\def\Hom{\mathrm{Hom}\,}
\def\Tor{\mathrm{Tor}\,}
\def\rk{\mathrm{rk}\,}
\def\codim{\mathrm{codim}\,}

\def\ko{{\mathbf k}}
\def\sk{\mathrm{sk}\,}
\def\RC{\mathrm{RC}\,}
\def\gr{\mathrm{gr}\,}

\def\R{{\mathbb R}}
\def\C{{\mathbb C}}
\def\Z{{\mathbb Z}}
\def\A{{\mathcal A}}
\def\B{{\mathcal B}}
\def\K{{\mathcal K}}
\def\M{{\mathcal M}}
\def\N{{\mathcal N}}
\def\E{{\mathcal E}}
\def\G{{\mathcal G}}
\def\D{{\mathcal D}}
\def\F{{\mathcal F}}
\def\L{{\mathcal L}}
\def\V{{\mathcal V}}
\def\H{{\mathcal H}}




\begin{abstract}
In this paper, we introduce and study a remarkable class of mechanisms formed by a $3 \times 3$ arrangement of rigid quadrilateral faces with revolute joints at the common edges. \textcolor{black}{In contrast to the
well-studied Kokotsakis meshes with a quadrangular base, we do not assume the planarity of the quadrilateral faces.  Our mechanisms} are a generalization of Izmestiev's orthodiagonal involutive type of Kokotsakis meshes formed by planar quadrilateral faces. The importance of this Izmestiev class is undisputed as it represents the first known flexible discrete surface -- T-nets -- which has been constructed by Graf and Sauer. Our algebraic approach yields a complete characterization of all \textcolor{black}{flexible $3 \times 3$ quad meshes} of the orthodiagonal involutive type \textcolor{black}{up to some degenerated cases}. It is shown that one has \textcolor{black}{a~maximum of} 8 degrees of freedom to construct such mechanisms. This is illustrated by several examples, including cases which could not be realized using planar faces. We demonstrate the practical realization of the proposed mechanisms by building a physical prototype using stainless steel. In contrast to plastic prototype fabrication, we avoid large tolerances and inherent flexibility.
\end{abstract}
\maketitle

\setcounter{section}{0}
\section{Introduction}

The growing interest in flexible or deployable structures is rooted in their widespread utility, from robotics to solar cells, meta-materials, art, and architecture. \textcolor{black}{In architecture, these structures help to simplify construction processes, open new design possibilities, and increase the utility of buildings. Santiago Calatrava Valls demonstrated this in large-scale projects like the Florida Polytechnic University, the UAE Pavilion at the Expo in Dubai, and the Quadracci Pavilion Milwaukee Art Museum. Norman Foster's Bund Finance Center is another example. Currently, architects resort to a minimal set of simple mechanisms. We unlock a large class of exciting mechanisms for future use in architecture and design. Unlike often investigated kinematic frame structures \cite{escrig1993,calatrava1981,li2020}, the presented quad mechanisms allow complete covering of a surface, which is essential for many applications.}


Deployable structures that have a flat state or even collapse to a straight or curve-like shape are investigated as means to increase the efficiency of construction processes. Even when confining
to rigid components of the structure, there is a wide variety of recent research. We mention rigid-foldable origami \cite{demaine-book,evans2015,evans2016,ciang-2019-cf,song-2017,Tac09a,Tac10a,tachi-2010,tachi-2011-onedof,tachi-2013-composite,tachi2016} and its use to approximate given target surfaces \cite{dang-feng,feng-dang}, or so-called programmable meta-materials, which are based on special patterns and essentially constitute mechanisms, typically with many degrees of freedom \cite{Callens2018,dieleman2020,Dudte2016,auxetic-2022,Konakovic:2016:BDC:2897824.2925944,konakovic2018,silverberg2014}.

The present work is a step towards \emph{transformable design with quad mesh mechanisms} \textcolor{black}{ (see Fig.~\ref{fig:SOM} for an example). These are quad meshes with regular combinatorics, possibly apart from a few isolated combinatorial singularities. Their faces are rigid bodies and all edges where two faces meet are revolute joints. In general, a quad mesh with rigid faces is overall rigid. The meshes we are interested in are special mechanisms that are capable of
performing a one-parameter motion when one quad is fixed (see Fig.~\ref{fig:motion-sequence}). In contrast to almost all prior work, we do not assume the quadrilateral faces to be planar. We also use the term 
\emph{flexible quad mesh} for such a quad mesh mechanism.}

\begin{figure}[t]
\centering
\includegraphics[width=0.3\textwidth]{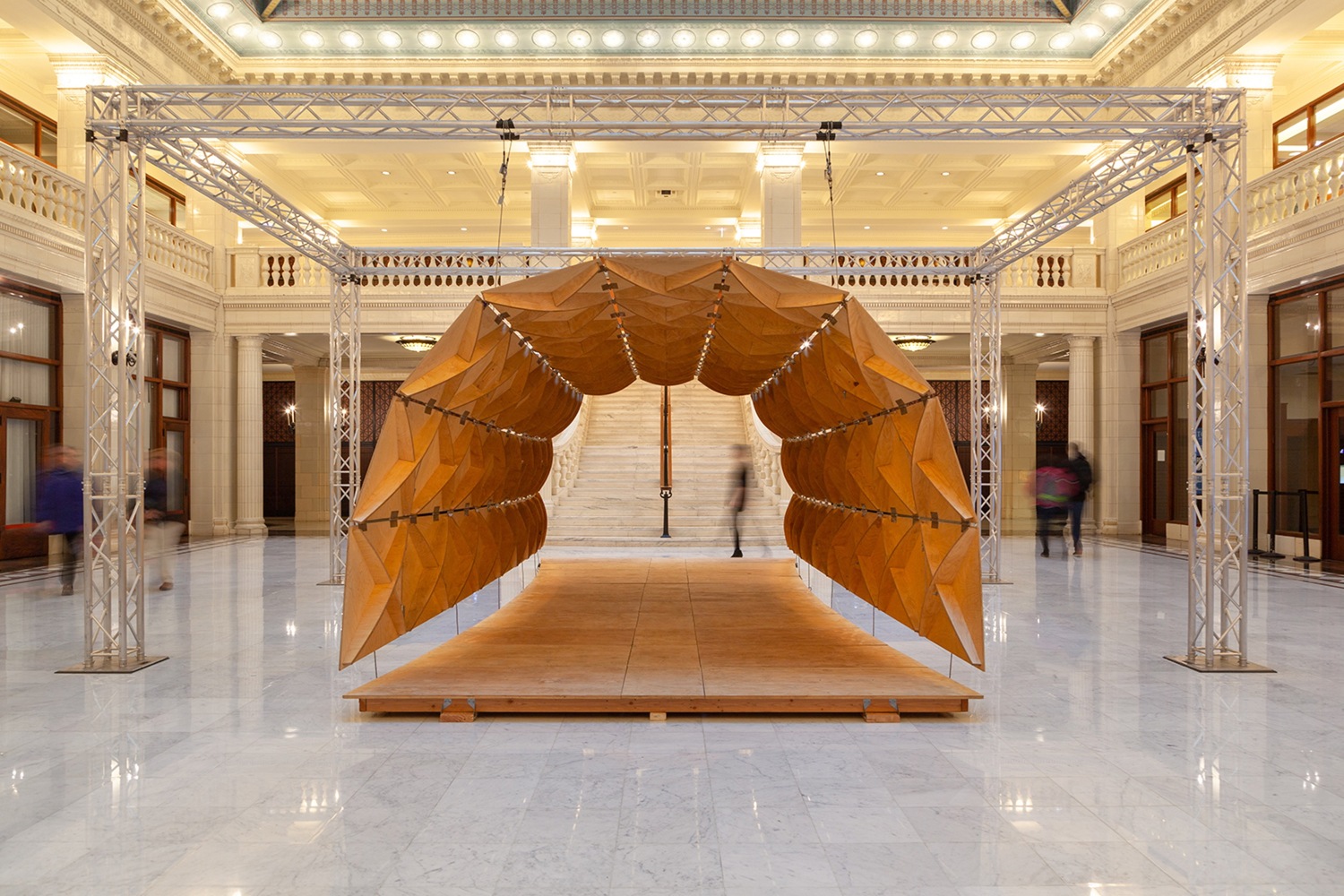}
	\hspace{1cm}
 \includegraphics[width=0.3\textwidth]{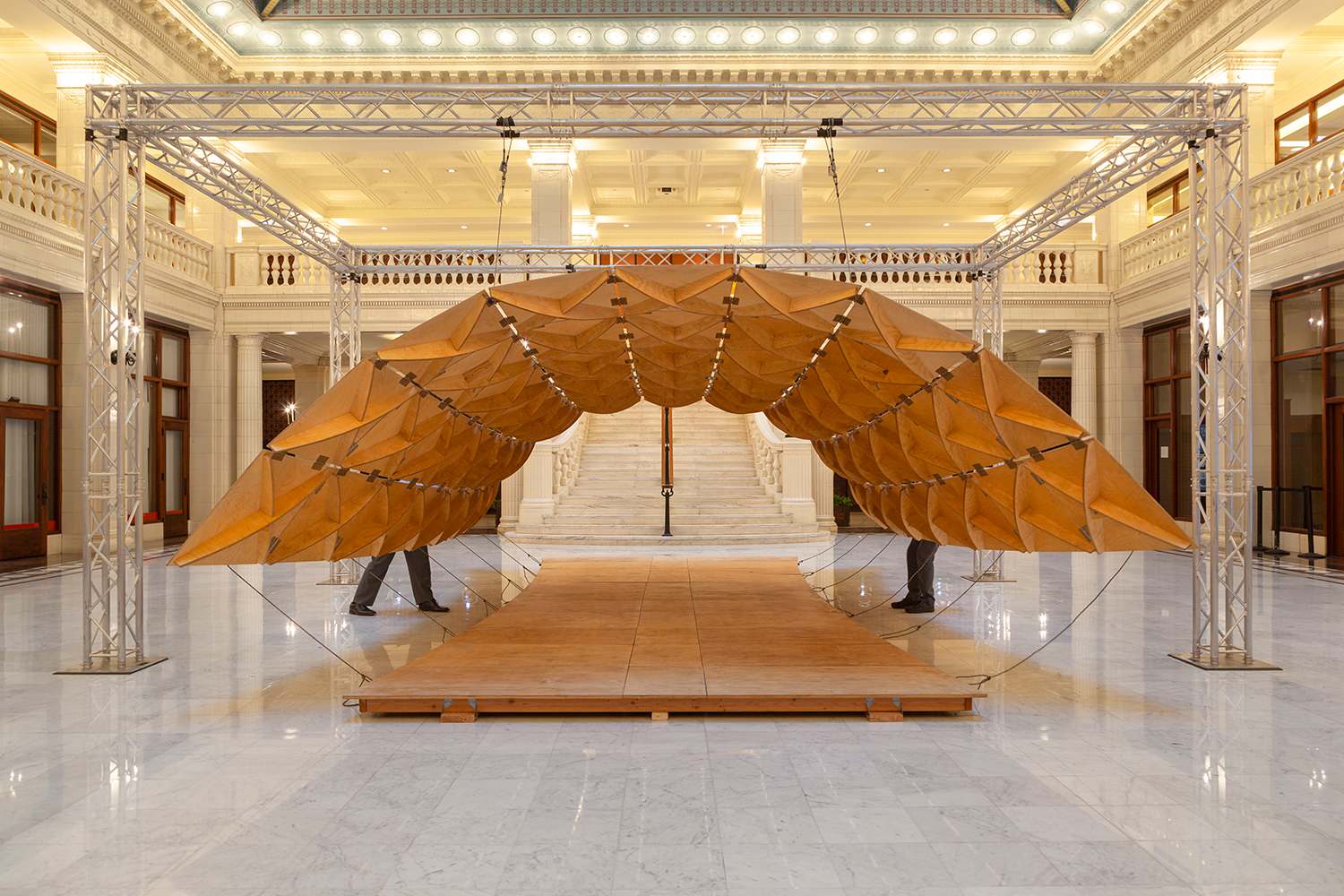}
\caption{\textcolor{black}{Example of transformable design with a quad mesh mechanism: {\it Kinematic Sculpture} by Skidmore, Owings \&\ Merrill,
an installation for the 2018 Chicago Design week \cite{Mitchell2018}.
Here the underlying flexible mesh is a so-called Voss net, a special flexible quad mesh with planar faces.}} \label{fig:SOM}
\end{figure}

\textcolor{black}{While a simply connected quad mesh of $2 \times n$ faces is always
flexible, a $3 \times 3$ quad mesh is in general rigid. This hints already to the fact that the basic components in a flexible quad mesh are the are so-called \emph{elementary cells} or \emph{complexes}, which are the $3 \times 3$ submeshes contained in it. It can be shown with the same proof as in Schief et al.~\cite{Schief2008}  that a quad mesh is flexible if and only if all its $3 \times 3$ submeshes are flexible. Hence, our focus is on flexible $3 \times 3$ meshes. We present a rich class of such flexible $3 \times 3$ quad meshes. Figure~\ref{fig:motion-sequence} shows an example and for its motion refer to a video on YouTube, \cite[1:03-2:07]{kokotsakis23}. }

\begin{figure}
\centering
\includegraphics[width=\textwidth]{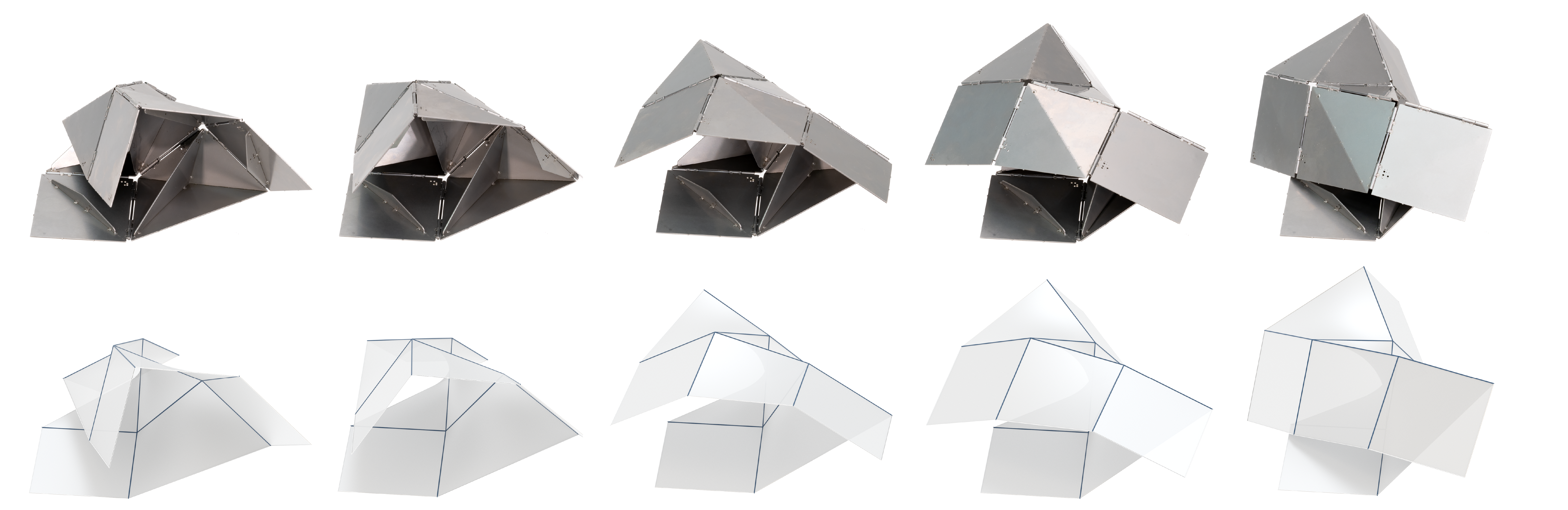}
\vspace{-0.4cm}
\caption{\textcolor{black}{One-parametric motion of a flexible 3x3 quad mesh with non-planar faces computed with our method. The lower left quad is fixed. In the upper row, we show a physical model and in the lower row the corresponding states in a digital model. There, the blue edges are those where a hinge is placed. To avoid confusion caused by diagonals in the non-planar quads, the quad faces in the digital model are filled by bilinear surfaces.}}
\label{fig:motion-sequence}
\end{figure}

\subsection{Prior work}

If one goes beyond quad meshes of $3 \times 3$ faces, only very few solutions are known, especially if they do not possess a flat state as in origami. \textcolor{black}{Most of these} quad meshes have planar faces and are discrete counterparts of well-studied smooth surfaces, namely Voss surfaces (characterized by a conjugate net of geodesics) \cite{voss1888uber} and profile affine surfaces \cite{sauer-graf,sauer:1970} which generalize molding surfaces that are traced out by a planar profile whose plane rolls on a cylinder. The discrete counterparts of Voss surfaces, so-called Voss nets, are reciprocal parallel to discrete models for surfaces of constant negative Gauss curvature  \cite{wunderlich-1951,sauer:1970}. Discrete profile affine surfaces are known as T-nets, as their flat faces are all trapezoids. Both types have recently received interest for applications in transformable design and architecture \cite{voss-IASS,t-nets-IASS,Mitchell2018,archdaily18}.

Flexible polyhedra have a long history in mathematical research. Famous examples include Bricard's flexible octahedra
\cite{bricard1897} and the flexible Kokotsakis meshes \cite{kokotsakis33,stachel2010}. The latter have a planar $n$-gon $P$ in the center, which is
surrounded by a belt of planar faces so that 3 faces meet at each vertex of $P$.  The case with a central quad $P$ 
is equivalent to a flexible $3 \times 3$ mesh of planar quads. \textcolor{black}{A classification of all types of such building blocks has been given by Izmestiev \cite{izmestiev-2017} by studying the complexified configuration space of the polyhedron. The idea behind the classification is briefly described in \cref{sec3}.  }
One remarkable type in Izmestiev's classification is called {\it orthodiagonal anti-involutive} and has been thoroughly studied by Erofeev and Ivanov \cite{EI20}. They have obtained a parametrization of all real examples and described an algorithm for their construction.

\textcolor{black}{Substantial progress on the composition of Izmestiev's flexible $3 \times 3$ meshes to larger flexible quad mesh mechanisms has recently been made by Dieleman et al. \cite{dieleman2020} and for some specific classes, such as linear compounds and conical types, by He and Guest~\cite{he2020rigid}.}

Another stream of relevant research is related to infinitesimal flexibility. Sauer \cite{sauer:1970} studied first order flexibility in great detail, not only for the case of planar quad faces. However, he mentions that there is no known example for finite flexibility in the presence of non-planar faces. Schief et al. \cite{Schief2008} showed that in the smooth limit and for planar faces, second order infinitesimal flexibility implies finite flexibility. This is not true for the discrete setting, but one has a remarkable type of integrable systems \cite{Schief2008}. 

The first example of a flexible mesh with non-planar faces has recently been presented by Nawratil \cite{Nawratil2022}
in a study of generalized Kokotsakis belts with non-planar quads surrounding the central polyline. This is a generalization of the isogonal type in the planar quad case, where in each vertex both pairs of opposite angles are equal (as in Voss nets) or
supplementary (as in flat foldable origami). 

\subsection{Our contribution}
In the present paper, we present a larger novel class of flexible $3 \times 3$ meshes \textcolor{black}{allowing also skew (= non-planar) quad faces}. 
We refer to this generalization of Kokotsakis meshes
with quadrangular bases as \textcolor{black}{\emph{skew Kokotsakis meshes}}. 
We follow the classical approach proposed by Bricard \textcolor{black}{and reformulate the flexibility problem in Euclidean space as one on the sphere}. This means that we can proceed with the polynomial system expressed in new variables, half-tangents of adjacent dihedral angles. Flexibility then means the existence of a one-parameter real-valued solution set of this
system.

We extend the approach of Izmestiev by constructing one remarkable class, \textcolor{black}{namely} orthodiagonal spherical quadrilaterals with involutive couplings. We derive explicit algebraic expressions to construct flexible \textcolor{black}{\emph{skew Kokotsakis meshes}}. These mechanisms have \textcolor{black}{maximum} 8 degrees of freedom, \textcolor{black}{not counting the one degree of freedom in a flexion. This is wider in comparison to the planar case. Our mechanisms} possess special geometric properties which generalize the classical ones of T-nets.

\subsection{Overview}\label{sec-overview}


\begin{figure}[t]
\centering
\includegraphics[width=0.35\textwidth]{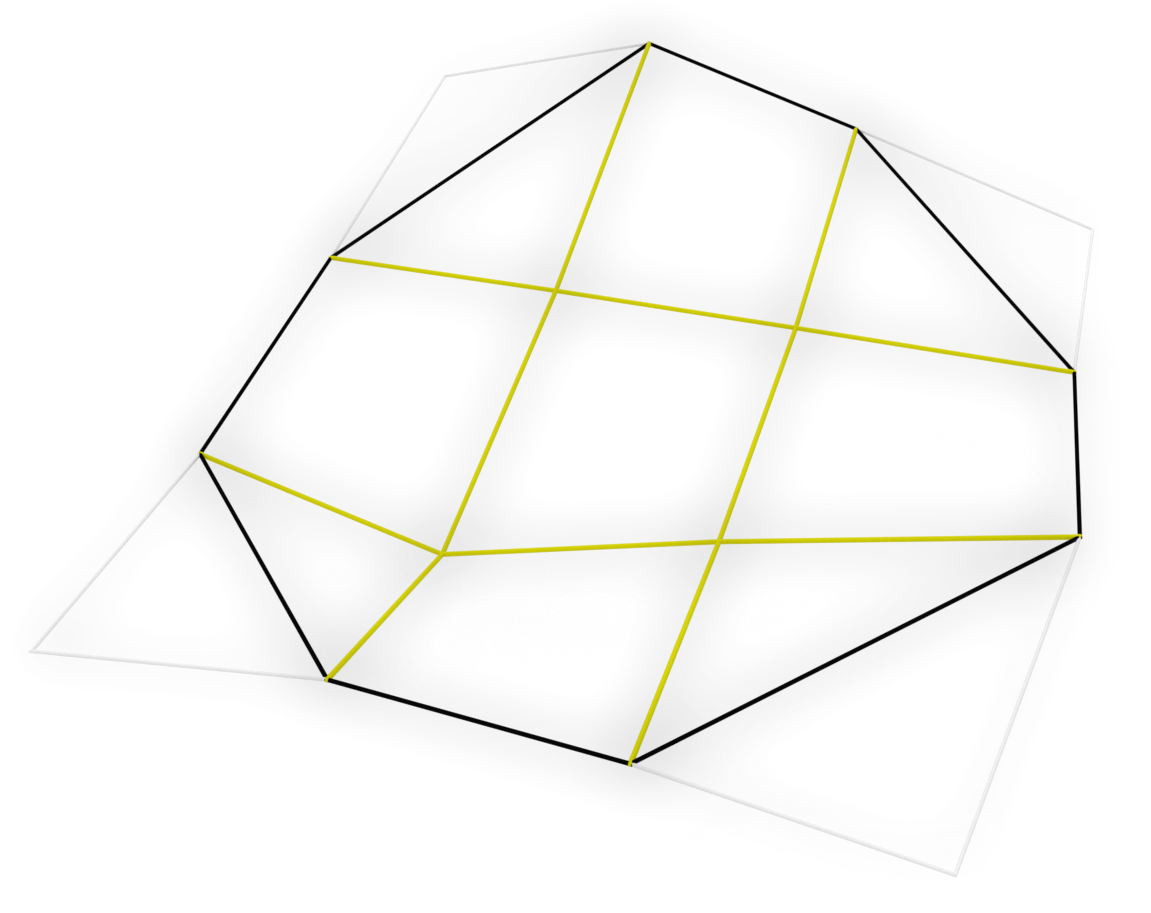}
	\hspace{1cm}
 \includegraphics[width=0.35\textwidth]{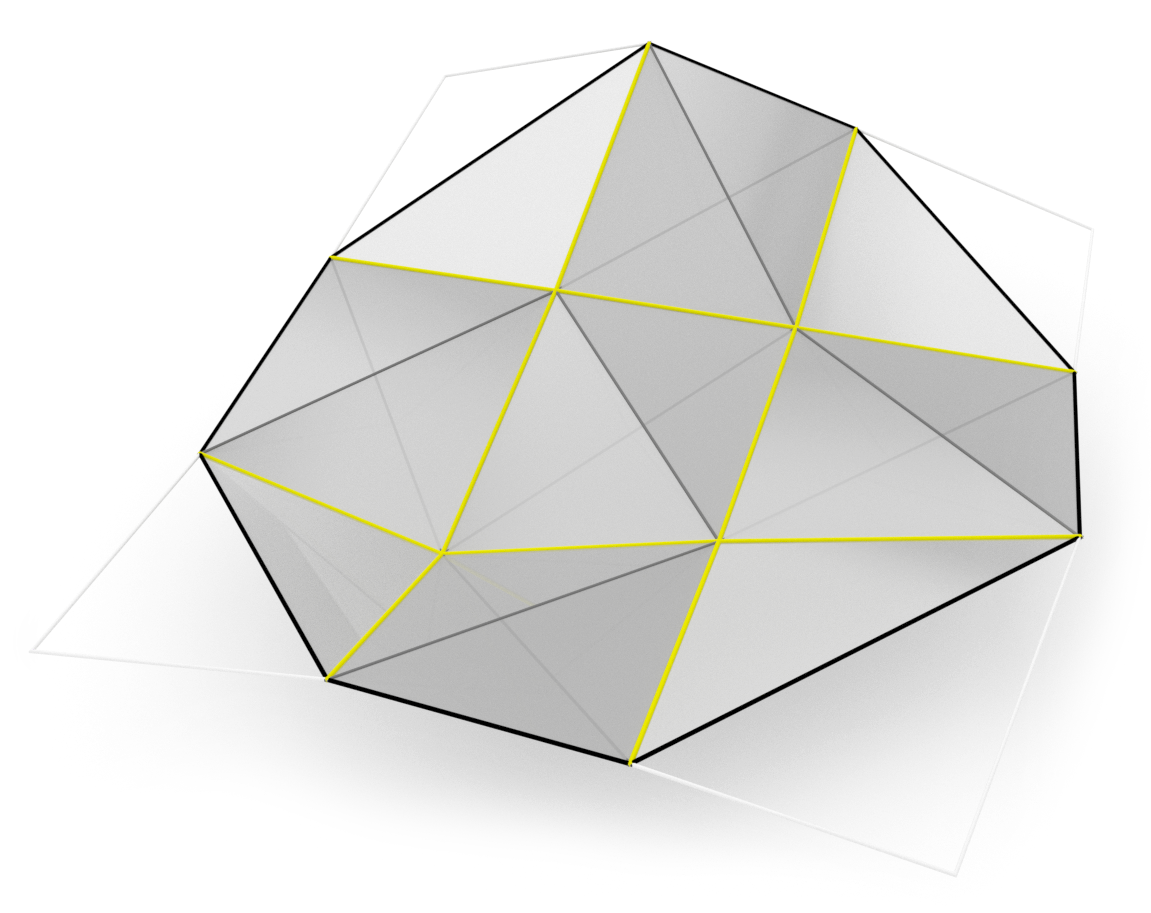}
\caption{\textcolor{black}{Left: For flexibility analysis of a $3 \times 3$
quad mesh (skew Kokotsakis mesh), we can replace the four corner quads by triangles. Right: For geometric analysis based on angles, diagonals are added to the remaining five quads so that they become tetrahedrons.  Triangles and tetrahedrons are considered rigid. The yellow edges where they are joined act as revolute joints.}} \label{fig:quadmesh}
\end{figure}

We propose a construction method for a \textcolor{black}{remarkable class of flexible skew Kokotsakis meshes.} We start with the notations for the \textcolor{black}{mesh} and the corresponding linkages of spherical quadrilaterals. \textcolor{black}{These spherical quadrilaterals represent the motion of $2 \times 2 $  submeshes.} Since the flexibility of a mesh is equivalent to the flexibility of a spherical linkage, the main focus of the study is the configuration space of the linkage. Firstly, we describe the configuration space of one orthodiagonal spherical quadrilateral in \cref{lma2.7} by means of a polynomial equation of two variables based on classical results.

\textcolor{black}{
\cref{sec3} starts with a brief discussion of Izmestiev's approach to the classification of the planar case based on a commutative diagram of branched covers associated with a flexible Kokotsakis mesh. \textcolor{black}{Proceeding towards the motion of $2 \times 3$ submeshes}, we introduce the notion of an involutive coupling, and in \cref{lma4.2}, we determine the conditions for the coupling of orthodiagonal quadrilaterals to be involutive. In \cref{lma3.4} we describe its configuration space as a rational equation.}

\textcolor{black}{
Finally, in \cref{sec5}, \textcolor{black}{we arrive at the description of flexible
$3 \times 3 $ meshes}. We show how to perform a matching of involutive couplings and define the class of orthodiagonal involutive skew Kokotsakis meshes. Moreover, we present algebraic reasoning to obtain the conditions of the class in terms of planar and dihedral angles and prove that they have a one-parameter set of real solutions, i.e. \textcolor{black}{that they are}  geometrically flexible.}

\textcolor{black}{In Section~\ref{sec:practice} we discuss the practical computation
of mechanisms. We illustrate it with an example for which we also built a physical
model from stainless steel and briefly address the fabrication process. }

\section{Configuration space of a spherical four-bar linkage}\label{sec2}
\textcolor{black}{
    Recall that the main object under consideration in this paper is a \emph{skew Kokotsakis mesh}, defined as a simply connected quad mesh with $3 \times 3$
    faces. We do not assume planarity of faces, but planar faces are allowed. 
     For our study, we confine to the essential part which is relevant for
     flexibility and therefore replace the four corner quads 
    by triangles.
 Moreover, we insert the two diagonals in each of the remaining five quads so
 that they become tetrahedrons (see Fig.~\ref{fig:quadmesh}). Also this resulting
 figure is called a skew Kokotsakis mesh. 
 The five tetrahedrons and four triangles are assumed to be rigid. The edges 
 in which they are joined act as revolute joints. }


\textcolor{black}{We call a skew Kokotsakis mesh \emph{flexible} if, after fixing one
face, it possesses a one-parametric family of isometric meshes in which
each face is related to its original position by a rigid body motion. } The problem of flexibility naturally admits a reformulation in terms of spherical geometry. By doing this, we remove excessive parameters such as lengths of edges and show that only angles play an important role for the property of flexibility.

\subsection{Notations}

\begin{figure}[ht]
        \centering
\scalebox{0.7}{\begin{overpic}[width=0.6\textwidth]{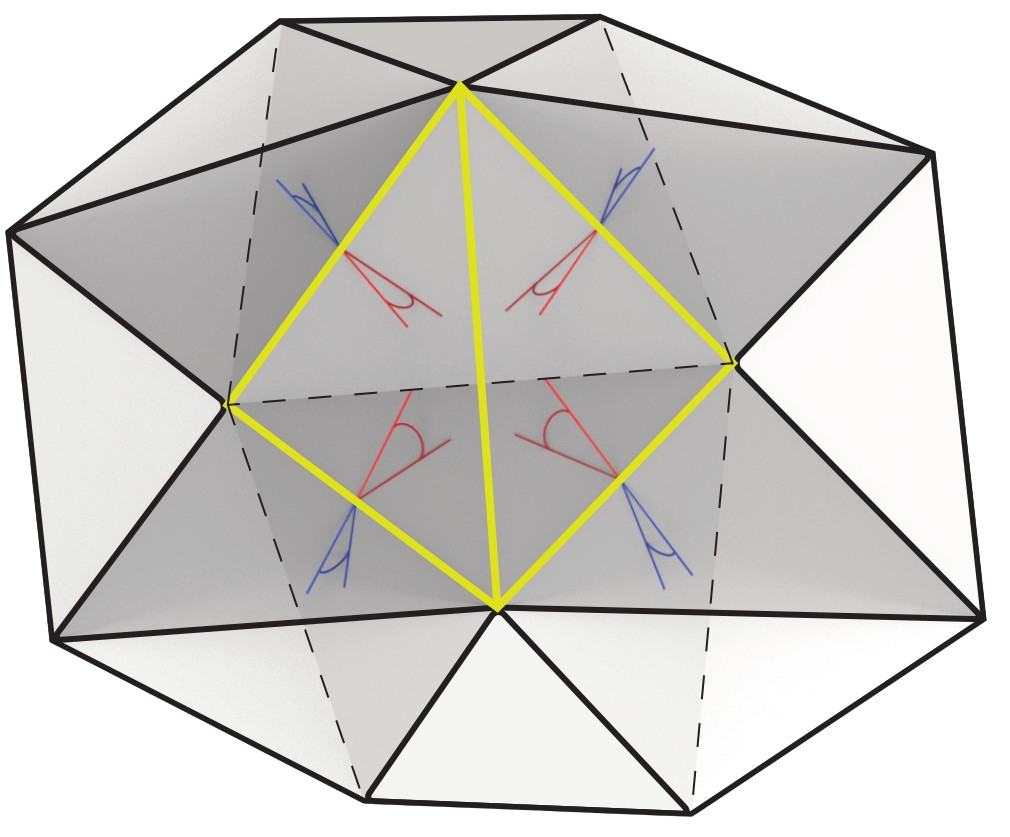}
    \put(43,77){\contour{white}{$A_3$}}
    \put(75,47){\contour{white}{$A_4$}}  
    \put(17,43){\contour{white}{$A_2$}}   
    \put(42,42){\contour{white}{$\tau_2$}}
    \put(50,41){\contour{white}{$\tau_1$}} 
    \put(41,49){\contour{white}{$\tau_3$}}
    \put(51,50){\contour{white}{$\tau_4$}}
    \put(30,30){\contour{white}{$\zeta_2$}}
    \put(28,58){\contour{white}{$\zeta_3$}}
    \put(62,61){\contour{white}{$\zeta_4$}}
    \put(66,31){\contour{white}{$\zeta_1$}}    
    \put(49,17){\contour{white}{$A_1$}}
    \put(70,0){\contour{white}{$B_1$}}
    \put(34,0){\contour{white}{$C_1$}}
    \put(1,20){\contour{white}{$B_2$}}
    \put(0,63){\contour{white}{$C_2$}}    
    \put(23,81){\contour{white}{$B_3$}}      
    \put(63,81){\contour{white}{$C_3$}}
    \put(93,67){\contour{white}{$B_4$}}    
    \put(98,20){\contour{white}{$C_4$}}  
\end{overpic}}
 \caption{Vertices and angles in a skew Kokotsakis mesh; $\tau_i$ denote \textcolor{black}{the constant dihedral} angles in the central tetrahedron (yellow) and $\zeta_j$ are the \textcolor{black}{constant dihedral} angles of the side tetrahedra.}\label{fig1}
\end{figure}

In this subsection we describe the notations of angles in order to transform the geometrical problem into algebra. As already mentioned, we interpret every skew quad as a rigid tetrahedron (see \cref{fig1}). The enumeration of vertices and angles is cyclic and takes values from $\{1\ldots 4\}$, for example, if $i = 4$ then $[A_{i}, A_{i + 1}, A_{i + 2}]$ denotes $[A_{4}, A_{1}, A_{2}]$ and if $i = 1$ $[A_{i - 2}, A_{i - 1}, A_{i}]$ denotes $[A_{3}, A_{4}, A_{1}]$.  

We denote by $\alpha_i, \beta_i, \gamma_i, \delta_i \in (0, \pi)$ the angles between edges as shown in \cref{fig2}. Each $\delta_i$ is the angle between edges $A_{i - 1}A_{i}, A_{i}A_{i + 1}$, $\beta_i$ is the angle between $B_{i}A_i, A_{i}C_{i}$, $\alpha_i$ is the angle between $C_{i}A_{i}, A_i A_{i+1}$ if $i = 1, 3$ and $B_i A_i, A_i A_{i - 1}$ if $i = 2, 4$, $\gamma_i$ is the angle between $B_i A_i, A_i A_{i - 1}$ if $i = 1, 3$ and $C_i A_i, A_i A_{i + 1}$ if $i = 2, 4$. They play the same role as the planar angles in~\cite{izmestiev-2017}.

In the analysis of the flexibility of the non-planar case, we have to consider the dihedral angles of all tetrahedrons as well. Denote by $a_i = A_{i - 1} A_i$, $b_i = B_i A_i$, $c_i = A_i C_i$, where for two points $A$ and $B$, $AB$ is the vector from point $A$ to point $B$. We fix the orientation such that $a_1, a_2, [a_1, a_2]$ are positively oriented, where $[a, b]$ is the cross product of vectors $a, b$. By $\tau_i, \zeta_i \in (-\pi, \pi)$ we denote the fixed oriented dihedral angles of the central tetrahedron $A_1 A_2 A_3 A_4$ and side tetrahedrons, respectively, as in \cref{fig1}. Each $\tau_i$ is the angle between faces $[A_{i - 2}, A_{i-1}, A_{i}], [A_{i - 1}, A_i, A_{i+1}]$, $\zeta_i$ is the angle between faces $[C_{i - 1}, A_{i - 1}, A_{i}], [A_{i - 1}, A_{i}, B_i]$. 

By $\phi_i,\psi_i \in (-\pi, \pi]$ we denote those oriented dihedral angles that change under flexion. We set
$$
\phi_i = \text{sign}(\phi_{i}^{*})\cdot(\pi - |\phi_{i}^*|), \quad \psi_i = \text{sign}(\psi_{i}^{*})\cdot(\pi - |\psi_{i}^*|),
$$
where $\phi_{i}^*$, $\psi_{i}^* \in (-\pi, \pi]$ are the oriented flexible dihedral angles (see \cref{fig2}) between faces $[A_{i - 1}, A_i, A_{i+1}], [A_{i - 1}, A_i, B_i]$ and $[A_{i - 2}, A_{i - 1}, A_{i}], [C_{i - 1}, A_{i - 1}, A_i]$, respectively, if $i = 1, 3$ and between faces $[A_{i - 2}, A_{i - 1}, A_{i}], [C_{i - 1}, A_{i - 1}, A_i]$ and $[A_{i - 1}, A_i, A_{i+1}]$, $[A_{i - 1}, A_i, B_{i}]$, respectively, if $i = 2, 4$. Changing the dihedral angles $\phi_i^{*}$ and $\psi_i^{*}$ corresponds to the flexibility of the Kokotsakis mesh.

We can calculate all dihedral angles with signs as follows:
\begin{equation*}
    \tau_i = \text{sign}(o_{\tau_i})\arccos((\overline{[a_{i - 1}, a_i]}, \overline{[a_i, a_{i + 1}]})), \quad \text{where} \quad o_{\tau_i} = \begin{cases}
    ([a_i, a_{i + 1}], a_{i + 2}), \text{ if } i = 1, 3\\
    ([a_{i - 1}, a_{i}],a_{i + 1}), \text{ if } i = 2, 4
    \end{cases},
\end{equation*}
\begin{equation*}
    \zeta_i = \text{sign}(o_{\zeta_i})\arccos((\overline{[a_{i}, b_i]}, \overline{[c_{i - 1}, a_{i}]})),\quad \text{where} \quad o_{\zeta_i} = 
    \begin{cases}
        ([a_i, b_i], c_{i-1}), \text{ if } i = 1, 3\\
        ([c_{i - 1}, a_i], b_i), \text{ if } i = 2, 4\\
    \end{cases},
\end{equation*}
and
\begin{equation*}
    \phi_i = 
    \begin{cases}
        \text{sign}(([a_i, a_{i + 1}], b_i))\arccos((\overline{[a_i, a_{i + 1}]}, \overline{[a_{i}, b_{i}]})), \text{ if } i = 1, 3\\
        \text{sign}(([a_{i - 1}, a_{i}], c_{i - 1}))\arccos((\overline{[a_{i - 1}, a_{i}]}, \overline{[c_{i- 1}, a_{i}]})), \text{ if } i = 2, 4
    \end{cases},
\end{equation*}
\begin{equation*}
    \psi_i =
    \begin{cases}
        \text{sign}(([a_{i - 1}, a_i], c_{i - 1}))\arccos((\overline{[a_{i - 1}, a_i]}, \overline{[c_{i - 1}, a_i]})), \text{ if } i = 1, 3\\
        \text{sign}(([a_{i}, a_{i + 1}], b_i)) \arccos((\overline{[a_i, a_{i + 1}]}, \overline{[a_i, b_i]})), \text{ if } i = 2, 4
    \end{cases},
\end{equation*}
where $(\cdot, \cdot)$ denotes the scalar product of vectors and $\overline{[a, b]} = \frac{[a, b]}{|[a, b]|}$ the normalization of vector $[a, b]$. These dihedral angles satisfy the equality $\psi_i = \phi_i + \tau_i + \zeta_i$.

\begin{figure}[ht]
    \centering

\scalebox{0.65}{
    \begin{tikzpicture}
            \node[anchor=south west,inner sep=0] at (0,0) {\includegraphics[width=\textwidth]{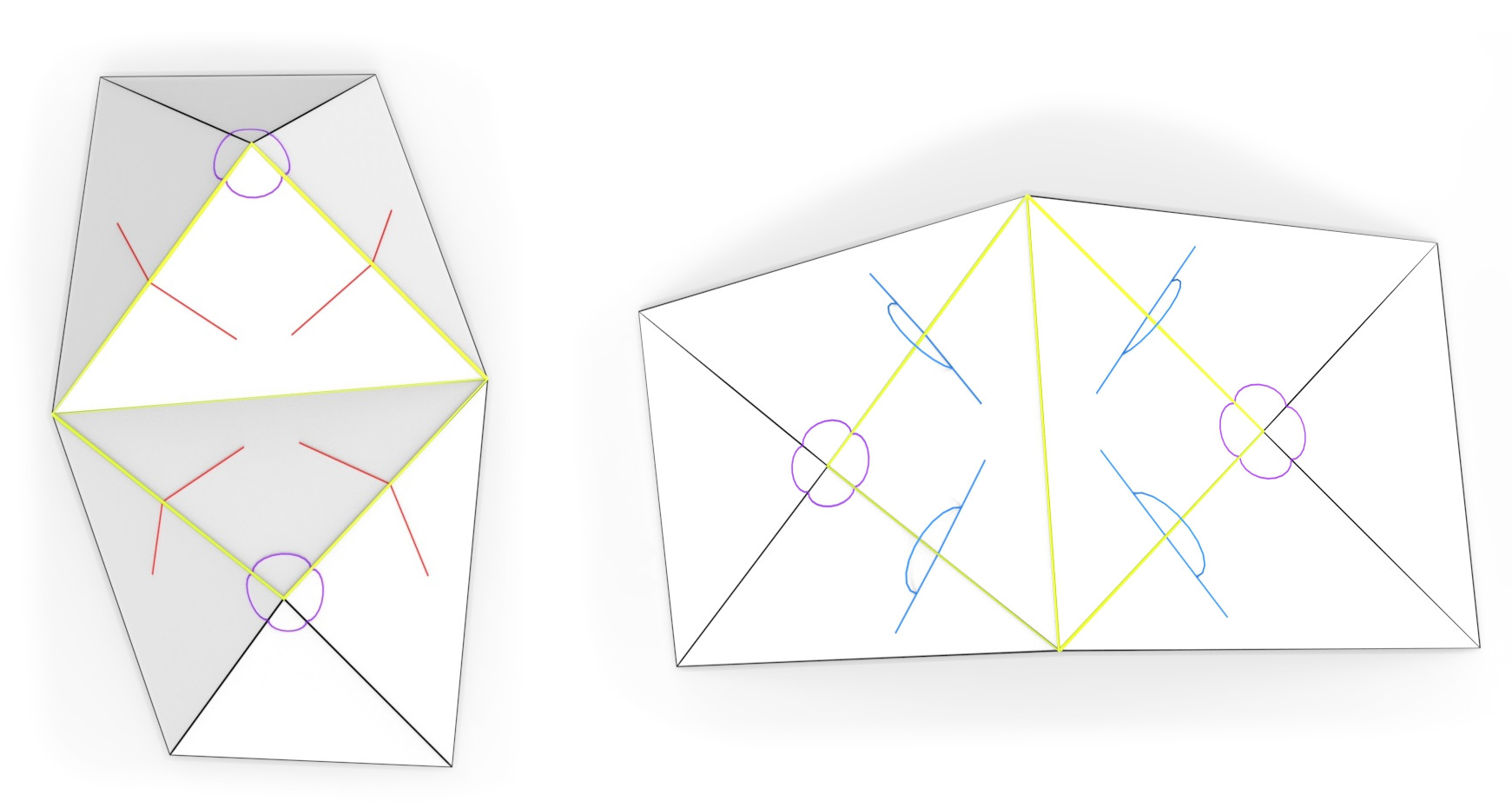}};

    \draw (3.2, 3.05) node {$\delta_1$};
    \draw (3.85, 2.3) node {$\gamma_1$};
    \draw (3.3, 1.8) node {$\beta_1$};
    \draw (2.6, 2.3) node {$\alpha_1$};
    
    \draw (2.8, 7.85) node {$\beta_3$};
    \draw (3.5, 7.3) node {$\alpha_3$};
    \draw (2.9, 6.65) node {$\delta_3$};
    \draw (2.2, 7.3) node {$\gamma_3$};

    \draw (4.2, 4) node {$\phi_1^*$};
    \draw[purple] (4.1, 3.77) arc(160:288:10pt);
    \draw (2, 3.9) node {$\phi_2^*$};
    \draw[purple] (2.15, 3.65) arc(20:-93:10pt);
    \draw (1.8, 5.5) node {$\phi_3^*$};
    \draw[purple] (1.95, 5.75) arc(-25:110:10pt);
    \draw (4.1, 5.7) node {$\phi_4^*$};
    \draw[purple] (3.95, 5.9) arc(210:83:10pt);

    \draw (1.8, 0.4) node {$C_1$};
    \draw (5.2, 0.3) node {$B_1$};
    \draw (0.3, 4.4) node {$A_2$};
    \draw (1, 8.4) node {$B_3$};
    \draw (4.5, 8.4) node {$C_3$};
    \draw (5.8, 4.8) node {$A_4$};

    \draw (10, 3.9) node {$\delta_2$};
    \draw (9.3, 4.5) node {$\gamma_2$};
    \draw (8.7, 3.8) node {$\beta_2$};
    \draw (9.4, 3.2) node {$\alpha_2$};
    
    \draw (14.95, 4.3) node {$\beta_4$};
    \draw (14.3, 3.5) node {$\gamma_4$};
    \draw (13.55, 4.2) node {$\delta_4$};
    \draw (14.2, 4.9) node {$\alpha_4$};

    \draw (12, 1.45) node{$A_1$};
    \draw (7.5, 1.3) node {$B_2$};
    \draw (7, 5.8) node {$C_2$};
    \draw (11.6, 7.15) node{$A_3$};
    \draw (16.9, 1.7) node {$C_4$};
    \draw (16.4, 6.4) node {$B_4$};
    
    \draw (13.75, 3) node {$\psi_1^*$};
    \draw (13.5, 5.7) node {$\psi_4^*$};
    \draw (9.9, 2.8) node {$\psi_2^*$};
    \draw (9.8, 5.3) node {$\psi_3^*$};
    
    \end{tikzpicture}
}

    \caption{Dihedral angles $\phi_i^*,\psi_i^*$ change during flexion of a skew Kokotsakis mesh; $(\alpha_i, \beta_i, \gamma_i, \delta_i)$ denote constant planar angles.}\label{fig2}
\end{figure}

\subsection{Spherical image}\label{sec3.2}

Firstly, we follow the classical approach of associating it with a movable spherical linkage (cf. \cite{izmestiev-2017}, \cite{nawratil-2010}, \cite{nawratil-2011}, \cite{nawratil-2012}, \cite{Nawratil2022} and \cite{stachel2010}). 
For each of the four interior vertices $A_1, A_2, A_3, A_4$ consider its spherical image, that is, the intersection of the cone of adjacent faces with a unit sphere centered at the vertex. This yields four spherical quadrilaterals $Q_i$ with side lengths $\alpha_i, \beta_i, \gamma_i, \delta_i$ in this cyclic order.  In the works mentioned above, the faces of a Kokotsakis mesh are planar. Therefore the spherical images of two adjacent vertices are {\it coupled} by means of a common dihedral angle. However, since we have
skew quad faces, we have to consider a generalization of that construction. Spherical images of two adjacent vertices are {\it coupled} if the difference of their dihedral angles in the common vertices is constant during all deformations. Thus, for every edge $A_i A_{i+1}$ we have a scissors-like coupling of two spherical quadrilaterals, see \cref{fig3}.

\textcolor{black}{A skew Kokotsakis mesh is flexible if and only if the spherical linkage that is formed by the closed chain of four coupled spherical images corresponding to the four edges of the base quad is flexible.} Thus, in order to study flexible Kokotsakis meshes, it suffices to consider all such flexible spherical linkages.

We start by recalling classical results about the configuration space of a spherical quadrilateral for the first quadrangle $Q_1$ in \cref{fig3}.

A spherical quadrilateral $Q_1$ with given side lengths $(\alpha_1, \beta_1, \gamma_1, \delta_1)$ is uniquely determined by the values of two adjacent angles; on the other hand, these angles satisfy a certain relation.
By performing the substitution
\begin{equation}\label{eq1}
    x_1 = \tan\frac{\phi_1}{2}, \quad x_2 = \tan\frac{\phi_2}{2}, 
\end{equation}
where $\phi_1$ and $\phi_2$ are as in \cref{fig3}, this leads to a polynomial equation for $x_1$ and $x_2$. 

\begin{lemma}[\cite{bricard1897}]
The configuration space of a spherical quadrilateral $Q_1$ with side lengths $\alpha_1$, $\beta_1$, $\gamma_1$, $\delta_1$ in this cyclic order is the solution of the equation
\begin{equation}\label{eq2}
P_1(x_{2}, x_1) = c_{22} x_{2}^2 x_1^2 + c_{20} x_{2}^2 + c_{02} x_1^2 + 2c_{11} x_{2} x_1 + c_{00} = 0,
\end{equation}
where
$$
\begin{matrix}
c_{22} = \sin \frac{\alpha_1 + \beta_1 + \gamma_1 - \delta_1}{2} \sin \frac{\alpha_1 - \beta_1 + \gamma_1 - \delta_1}{2}, \\
c_{20} = \sin \frac{\alpha_1 - \beta_1 - \gamma_1 - \delta_1}{2} \sin \frac{\alpha_1 + \beta_1 - \gamma_1 - \delta_1}{2}, \\
c_{02} = \sin \frac{\alpha_1 + \beta_1 - \gamma_1 + \delta_1}{2} \sin \frac{\alpha_1 - \beta_1 - \gamma_1 + \delta_1}{2}, \\
c_{11} = -\sin\alpha_1 \sin \gamma_1, \\
c_{00} = \sin \frac{\alpha_1 - \beta_1 + \gamma_1 + \delta_1}{2} \sin \frac{\alpha_1 + \beta_1 + \gamma_1 + \delta_1}{2}.
\end{matrix}
$$
\end{lemma}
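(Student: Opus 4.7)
The proof follows the classical route for spherical four-bar linkages. First, I would set up a vector model on the unit sphere: place the vertex of $Q_1$ at which $\phi_1$ is measured at the north pole and orient the coordinate system so that one of the two arcs meeting there lies in a fixed meridian. Using rotation by $\phi_1$ about the north pole, the second arc meeting at that vertex is parametrized, and its far endpoint becomes an explicit unit vector with coordinates that are linear in $\cos\phi_1,\sin\phi_1$ with coefficients involving $\sin\alpha_1,\cos\alpha_1$ and $\sin\delta_1,\cos\delta_1$. An analogous construction at the adjacent vertex produces the fourth vertex of the quadrilateral as a unit vector linear in $\cos\phi_2,\sin\phi_2$ with coefficients built from the other two side lengths.

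Second, the closure condition is that these two free vertices are at spherical distance equal to the remaining side length, i.e.\ that their inner product equals the cosine of that length. Expanding this inner product yields a single scalar equation of the form
\begin{equation*}
K_0 \;=\; K_1\cos\phi_1 + K_2\cos\phi_2 + K_3\cos\phi_1\cos\phi_2 + K_4\sin\phi_1\sin\phi_2,
\end{equation*}
where the five constants $K_j$ are products of sines and cosines of the four side arcs. The reflection symmetry across the meridional plane kills the would-be cross terms $\sin\phi_1\cos\phi_2$ and $\cos\phi_1\sin\phi_2$, which is exactly why no odd-total-degree monomials other than $x_1x_2$ can survive the half-tangent substitution.

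Third, applying the Weierstrass substitution $\cos\phi_i=(1-x_i^2)/(1+x_i^2)$, $\sin\phi_i=2x_i/(1+x_i^2)$, multiplying through by $(1+x_1^2)(1+x_2^2)$, and collecting monomials gives a polynomial of bidegree $(2,2)$ with precisely the five nonzero coefficients $c_{22},c_{20},c_{02},2c_{11},c_{00}$ claimed. The coefficient $2c_{11}$ is read off directly from the $\sin\phi_1\sin\phi_2$ term and gives $-2\sin\alpha_1\sin\gamma_1$, i.e.\ $c_{11}=-\sin\alpha_1\sin\gamma_1$. For the remaining four coefficients, each emerges as a signed sum of the form $\cos\beta_1-\cos\alpha_1\cos\gamma_1\cos\delta_1\pm\sin\alpha_1\sin\gamma_1\cos\delta_1\pm\cdots$, with the four sign choices determined by parity in $x_1,x_2$.

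Finally, to recognize each such signed sum as a product of two half-angle sines, I would apply the identity
\begin{equation*}
\cos X - \cos Y \;=\; -2\sin\tfrac{X+Y}{2}\sin\tfrac{Y-X}{2}
\end{equation*}
together with the expansion of $\cos(\alpha_1+\gamma_1\pm\delta_1)$ via the angle-addition formulas. Each of the four signed sums then collapses to a single product $\sin\tfrac{\alpha_1\pm\beta_1\pm\gamma_1\pm\delta_1}{2}\sin\tfrac{\alpha_1\pm\beta_1\pm\gamma_1\pm\delta_1}{2}$ with the correct sign pattern, yielding the advertised formulas for $c_{22},c_{20},c_{02},c_{00}$. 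The main obstacle is this last bookkeeping step: matching four sign patterns to four distinct factorizations requires careful tracking of signs, but each individual trigonometric identity is routine and the proof is essentially the one originally given by Bricard.
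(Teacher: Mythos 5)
Your outline is correct and coincides with the classical Bricard--Stachel derivation that the paper itself does not reproduce but simply cites: write the closure condition $\cos\beta_1=\cos\alpha_1\cos\gamma_1\cos\delta_1+\cos\alpha_1\sin\gamma_1\sin\delta_1\cos\phi_1+\sin\alpha_1\cos\gamma_1\sin\delta_1\cos\phi_2-\sin\alpha_1\sin\gamma_1\cos\delta_1\cos\phi_1\cos\phi_2-\sin\alpha_1\sin\gamma_1\sin\phi_1\sin\phi_2$, apply the half-tangent substitution, and factor the four corner coefficients by sum-to-product. The only blemish is a sign slip in your stated identity (it should read $\cos X-\cos Y=-2\sin\tfrac{X+Y}{2}\sin\tfrac{X-Y}{2}$), which is harmless given that you already flag the sign bookkeeping as the remaining routine work.
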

The proof can be found in \cite{stachel2010}. We view equation \cref{eq2} as an equation in two projective variables $x_1, x_2 \in \mathbb{C}\text{P}^1$, to incorporate the value $\infty$ for $\phi_i=\pi$. By $Z_1$ we denote the solution set of \cref{eq2} in $(\mathbb{C}\text{P}^1)^2$
$$
Z_1 = \{(x_2, x_1) \in \mathbb{C}\text{P}^1 \times \mathbb{C}\text{P}^1| P_1(x_2, x_1) = 0\}.
$$

The same result holds for the other spherical quads $Q_i$, which means that the configuration space of a Kokotsakis mesh equals the set of solutions for the system
\begin{equation}\label{eq3}
P_1(x_2, x_1) = 0, \quad P_2(y_2, y_3) = 0, \quad P_3(x_4, x_3) = 0, \quad P_4(y_4, y_1) = 0,
\end{equation}
where $x_i = \tan \frac{\phi_i}{2}$,  $y_i = \tan \frac{\psi_i}{2}$. \textcolor{black}{Thus we have the following lemma
\begin{lemma}
    A skew Kokotsakis mesh with a quad base is flexible if and only if the system of polynomial equations~\cref{eq3} has a family of one-parameter solutions over the reals.
\end{lemma}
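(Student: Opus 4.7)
The plan is to chain together the equivalences that have already been prepared in the preceding paragraphs. As noted there, flexibility of the skew Kokotsakis mesh is equivalent to flexibility of the closed chain of four coupled spherical quadrilaterals $Q_1,\ldots,Q_4$, one for each interior vertex of the central quad. It therefore suffices to identify the configuration space of this coupled spherical linkage with the real zero set of the system~\cref{eq3}, interpreted together with the half-tangent substitution~\cref{eq1} and the implicit coupling identities $\psi_i = \phi_i + \tau_i + \zeta_i$ that express the skew-face scissors coupling as M\"obius relations on the variables $x_i,y_i$.

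For the forward direction, assume a one-parameter family of isometric deformations of the mesh exists. Along this family, the oriented dihedral angles $\phi_i,\psi_i$ are continuous functions of the flexion parameter, while the planar angles $(\alpha_i,\beta_i,\gamma_i,\delta_i)$ and the skew-tetrahedron dihedral angles $\tau_i,\zeta_i$ remain constant. At each interior vertex $A_i$, the four face cones meeting at $A_i$ cut a spherical quadrilateral of the prescribed side lengths on the unit sphere, and by the previous lemma its two flexible vertex angles satisfy the Bricard relation $P_i=0$ in the half-tangent coordinates~\cref{eq1}. This immediately yields the desired one-parameter real family of solutions of~\cref{eq3}.

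For the converse, I would start with a one-parameter real family of solutions and reconstruct a flex. After fixing one tetrahedron in space, each value of the parameter determines all eight movable dihedral angles via~\cref{eq1}. The vanishing of $P_i$ together with the previous lemma guarantees that these angles are realized by a genuine spherical quadrilateral $Q_i$ of the prescribed side lengths, so the four corner tetrahedra can be placed consistently with the central one; the coupling identities then glue the four local reconstructions into a single mesh that closes up. Continuity in the parameter upgrades the assignment into a genuine flex.

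The main obstacle is the hidden subtlety in the phrase ``one-parameter family of real solutions'': one should read it as a real-analytic arc, not merely an infinite set of isolated real points on a possibly complex curve. This is the notion naturally inherited from the geometric side, and any one-dimensional real component of the variety cut out by~\cref{eq3} together with the coupling identities contains such an arc at any smooth point, so the two formulations of the equivalence agree.
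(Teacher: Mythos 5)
Your argument is correct and follows exactly the route the paper itself takes (the paper states this lemma with no written proof, treating it as an immediate consequence of the preceding reduction to the coupled spherical linkage and Bricard's description of each $Q_i$'s configuration space via the half-tangent substitution). Your fleshing-out of the two directions and the remark on reading ``one-parameter family'' as a real arc rather than isolated real points are consistent with the paper's intent and add no conflicting content.
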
}
\begin{figure}[ht!]
    \centering
        \begin{tikzpicture}
        \node[anchor=south west,inner sep=0] at (-3.73, -4.63) {\includegraphics[width=0.5\textwidth]{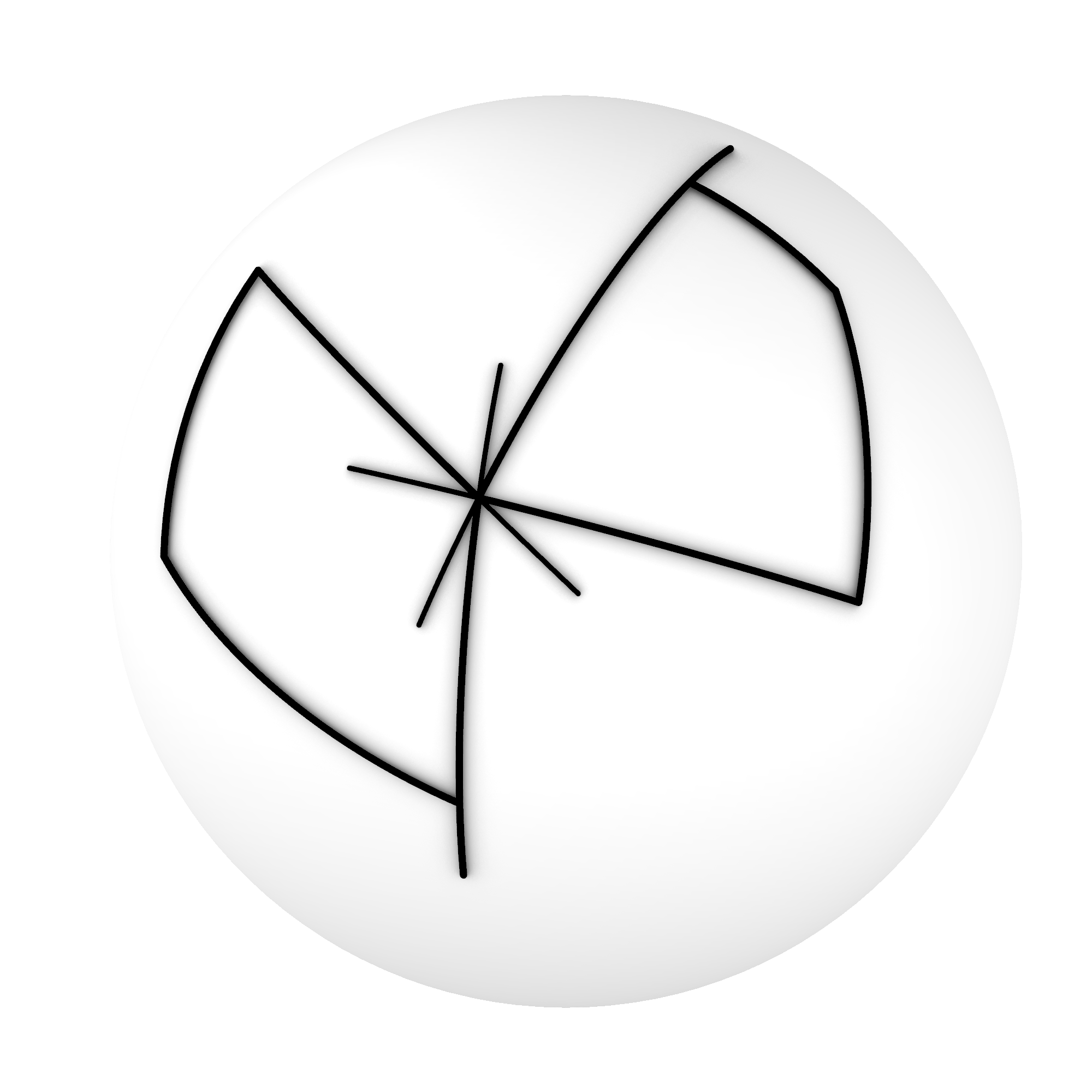}};

        \draw[ultra thick] (245:0.5) arc (245:345:0.5);
        \draw (290:0.8) node {$\psi_2$};
        
        \draw[ultra thick] (315:0.8) arc (315:345:0.8);
        \draw (330:1.1) node {$\zeta_2$};
        
        \draw[ultra thick] (60:0.9) arc (60:80:0.9);
        \draw (71:1.2) node {$\tau_2$};
        
        \draw[ultra thick] (82:0.6) arc (82:135:0.6);
        \draw (106:0.85) node {$\phi_2$};
        
        \draw[ultra thick] (255:2.25) arc (170:265:0.5);
        \draw (255:2.7) node {$\phi_1$};
        
        \draw[ultra thick] (50:3) arc (0:40:0.5);
        \draw (49:3.35) node {$\psi_3$};

        \draw (-0.8, 1.2) node {$\alpha_1$};
        \draw (-2.5, 0.8) node {$\beta_1$};
        \draw (-1.7, -1.7) node {$\gamma_1$};
        \draw (0.15, -1.5) node {$\delta_1$};

        \draw (1.8, -0.7) node {$\alpha_1$};
        \draw (3.3, 0.5) node {$\beta_1$};
        \draw (2.7, 2.1) node {$\gamma_1$};
        \draw (0.8, 1.8) node {$\delta_1$};

        \draw (-1.4, -0.2) node {$Q_1$};
        \draw (1.8, 0.8) node {$Q_2$};
    \end{tikzpicture}
    \caption{A coupling $(Q_1, Q_2)$ of two spherical quadrilaterals.}
    \label{fig3}
\end{figure}    
\textcolor{black}{Notice that due to $\psi_i = \phi_i + \tau_i + \zeta_i$, the variables $y_i$ can be expressed by $x_i$ in the following form
\begin{equation}\label{eq8}
y_i(x_i) = \frac{x_i + F_i}{1 - F_i x_i},
\end{equation}
where $F_i = \tan\frac{\tau_i + \zeta_i}{2} \in \mathbb{R}\text{P}^1$.} 
\begin{definition}\label{def-pseudoplanar}
A skew Kokotsakis mesh is called pseudo-planar if $F_i = 0$ for all $i$.
\end{definition}
The definition is justified by the fact that equations which define the configuration space of the Kokotsakis mesh (system \cref{eq3}) coincide with the planar case.

The approach used by Izmestiev in \cite{izmestiev-2017} for the case of planar faces (where $y_i = x_i$) was to find conditions when algebraic sets $R_{12}(x_1, x_3) = \text{res}_{x_2}(P_1, P_2) = 0$ and $R_{34}(x_1, x_3) = \text{res}_{x_4}(P_3, P_4) = 0$ have a common irreducible component, where $\text{res}_{x_k}(P_i, P_j)$ stands for a resultant with respect to $x_k$. We will follow the same approach in \cref{sec3}.

\textcolor{black}{Note that functions
$$
P_{2}(y_2, y_3) = \frac{\tilde{P}_2(x_2, x_3)}{(1 - F_2 x_2)(1 - F_3 x_3)}, \quad P_{4}(y_4, y_1) = \frac{\tilde{P}_4(x_4, x_1)}{(1 - F_4 x_4)(1 - F_1 x_1)}
$$
are rational in $x_i$, and by $R_{12}(x_1, x_3), R_{34}(x_1, x_3)$ we mean $\text{res}_{x_2}(P_1, \tilde{P}_2), \text{res}_{x_4}(P_3, \tilde{P}_4)$, respectively.}

\subsection{Orthodiagonal quadrilateral}
\textcolor{black}{In this subsection, we remind the reader of the main results about orthodiagonal quadrilaterals on the sphere. Orthodiagonal quadrilaterals admit a simple representation of their configuration space which is described in \cref{lma2.7}.}

\textcolor{black}{Let $Q$ be a spherical quadrilateral with side lengths $\alpha, \beta, \gamma, \delta$ in this cyclic order.
\begin{definition}
A spherical quadrilateral is said to be {\it orthodiagonal}, if its diagonals are orthogonal, see \cref{fig:orthodiagonal-quads}.
\end{definition}}

The orthodiagonality is equivalent to the following identity.
    \begin{lemma}[Lemma 6.3, \cite{EI20}]\label{lma5.3}
        The diagonals of a spherical quadrilateral with side lengths $\alpha, \beta, \gamma, \delta$ (in this cyclic order) are orthogonal if and only if its side lengths satisfy the relation
        \begin{equation}\label{eq33}
            \cos \alpha \cos \gamma = \cos \beta \cos \delta.
        \end{equation}
    \end{lemma}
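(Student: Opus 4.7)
The plan is to introduce the intersection point $O$ of the two diagonals and reduce the global identity on the sides $\alpha,\beta,\gamma,\delta$ to a single trigonometric identity involving the angle $\theta$ between the diagonals. Denote the lengths of the four segments of the diagonals by $p=|V_1O|$, $q=|OV_3|$ (so $p+q=d_1$) and $r=|V_2O|$, $s=|OV_4|$ (so $r+s=d_2$). The angles at $O$ in the four sub-triangles are $\theta$ and $\pi-\theta$ alternately.

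First I would write down the spherical law of cosines in each of the four triangles $V_1OV_2$, $V_2OV_3$, $V_3OV_4$, $V_4OV_1$:
\begin{align*}
\cos\alpha &= \cos p\cos r + \sin p\sin r\cos\theta,\\
\cos\beta  &= \cos q\cos r - \sin q\sin r\cos\theta,\\
\cos\gamma &= \cos q\cos s + \sin q\sin s\cos\theta,\\
\cos\delta &= \cos p\cos s - \sin p\sin s\cos\theta.
\end{align*}
Next I would form the difference $\cos\alpha\cos\gamma-\cos\beta\cos\delta$. The constant terms and the $\cos^2\theta$ terms cancel, leaving a single multiple of $\cos\theta$.

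The key identity to establish — and this is the only real computational step — is
\[
\cos\alpha\cos\gamma-\cos\beta\cos\delta = \cos\theta\,\sin d_1\,\sin d_2.
\]
To obtain it one groups the four surviving mixed terms as
\[
(\cos p\cos r\sin q\sin s+\cos p\cos s\sin q\sin r)+(\sin p\sin r\cos q\cos s+\sin p\sin s\cos q\cos r)
\]
and applies the sine addition formula twice: the first bracket collapses to $\cos p\sin q\sin(r+s)=\cos p\sin q\sin d_2$, the second to $\sin p\cos q\sin d_2$, and then one more application gives $\sin d_2\sin(p+q)=\sin d_1\sin d_2$. This bookkeeping is the main (and only) obstacle, but it is purely formal once the grouping above is spotted.

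Since the quadrilateral is non-degenerate we have $d_1,d_2\in(0,\pi)$, so $\sin d_1\sin d_2\neq 0$, and the displayed identity shows that the algebraic condition $\cos\alpha\cos\gamma=\cos\beta\cos\delta$ is equivalent to $\cos\theta=0$, i.e.\ to the diagonals being orthogonal. This proves both directions of the lemma simultaneously and explains geometrically why \cref{eq33} is the right algebraic encoding of orthodiagonality: it is exactly the coefficient of $\cos\theta$ in the spherical Ptolemy-type expansion.
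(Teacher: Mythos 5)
The paper does not prove this lemma at all: it is imported verbatim as Lemma~6.3 of the cited reference \cite{EI20}, so there is no in-paper argument to compare against. Your proof is a correct, self-contained derivation, and it is essentially the standard one for this fact. I checked the computation: with the four applications of the spherical law of cosines as you wrote them (the angles at $O$ alternating between $\theta$ and $\pi-\theta$ is what produces the sign pattern $+,-,+,-$), the product difference expands so that the $\cos p\cos q\cos r\cos s$ terms and the $\sin p\sin q\sin r\sin s\cos^2\theta$ terms cancel, and the four surviving mixed terms regroup exactly as you claim into
\begin{equation*}
\cos\alpha\cos\gamma-\cos\beta\cos\delta=\cos\theta\,(\cos p\sin q+\sin p\cos q)\sin(r+s)=\cos\theta\,\sin d_1\,\sin d_2 ,
\end{equation*}
so the equivalence with $\cos\theta=0$ follows once $\sin d_1\sin d_2\neq 0$.

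Two small points worth making explicit. First, the decomposition into four triangles with $p+q=d_1$, $r+s=d_2$ and alternating angles $\theta,\pi-\theta$ presupposes that the two diagonal arcs actually meet at an interior point $O$, i.e.\ that the quadrilateral is convex; for a non-convex quadrilateral the intersection point of the two great circles lies outside one of the segments and the bookkeeping changes (the identity survives with signed lengths, but that should be said or the statement restricted to the convex case, which is how the source treats it). Second, you should note why $d_1,d_2\in(0,\pi)$: on the sphere a diagonal of length $0$ or $\pi$ means two vertices coincide or are antipodal, which is excluded by non-degeneracy. With those caveats the argument is complete and also yields the stronger quantitative identity, which is a nice bonus over the bare equivalence stated in the lemma.
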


The orthodiagonality property is quite remarkable as it also implies an existence property:
    \begin{lemma}[Lemma 6.4, \cite{EI20}]
        Let $\alpha, \beta, \gamma, \delta \in (0, \pi)$ satisfy $\cos\alpha \cos \gamma = \cos \beta \cos \delta$. Then there exists a spherical orthodiagonal quadrilateral with side lengths $\alpha, \beta, \gamma, \delta$.
    \end{lemma}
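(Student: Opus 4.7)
The plan is to construct the quadrilateral by placing the intersection point of its two diagonals at the north pole and taking the diagonals themselves along perpendicular great circles. Concretely, put $P=(0,0,1)$, let the first diagonal lie in the plane $y=0$ and the second in the plane $x=0$, and describe the four vertices by positive distances $a,b,c,d$ from $P$ along the four meridian rays at longitudes $0,\pi/2,\pi,3\pi/2$. Any quadrilateral obtained this way is orthodiagonal by construction, so the question reduces to: when can one choose $a,b,c,d$ so that the consecutive side lengths equal $\alpha,\beta,\gamma,\delta$?

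The key tool is the spherical Pythagorean theorem: for a right spherical triangle with legs of length $a$ and $b$, the hypotenuse $h$ satisfies $\cos h=\cos a\cos b$. Applied to the four right triangles cut off by the diagonals at $P$, it yields the system
\begin{equation*}
\cos\alpha=\cos a\cos b,\quad \cos\beta=\cos b\cos c,\quad \cos\gamma=\cos c\cos d,\quad \cos\delta=\cos d\cos a.
\end{equation*}
Multiplying the first and third equations, and separately the second and fourth, both yield $\cos a\cos b\cos c\cos d$, so the hypothesis $\cos\alpha\cos\gamma=\cos\beta\cos\delta$ is precisely the obstruction-free compatibility condition for this system.

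To solve the system explicitly, set $X=\cos a,\ Y=\cos b,\ Z=\cos c,\ W=\cos d$ and consider first the generic case where all four prescribed cosines are nonzero. Choose $X\in(-1,1)\setminus\{0\}$; then $Y,Z,W$ are determined by $Y=\cos\alpha/X$, $W=\cos\delta/X$, $Z=X\cos\beta/\cos\alpha$, and the compatibility relation makes the remaining equation $ZW=\cos\gamma$ hold automatically. An elementary check shows that, because $|\cos\alpha|,|\cos\beta|,|\cos\gamma|,|\cos\delta|<1$, the set of admissible $X$ producing $|Y|,|Z|,|W|<1$ is nonempty in every sign configuration; correspondingly one recovers $a,b,c,d\in(0,\pi)$ and hence a non-degenerate quadrilateral.

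The main obstacle I expect is bookkeeping in the degenerate subcases where one of the prescribed cosines vanishes. By the compatibility relation these vanishings must occur in pairs (e.g.\ $\cos\alpha=0$ forces $\cos\beta\cos\delta=0$), and in each such case one sets the corresponding half-diagonal(s) to $\pi/2$, making one of $X,Y,Z,W$ vanish, and then chooses the remaining unknowns so as to satisfy the two surviving equations. Since only finitely many such subcases appear and each admits an immediate direct construction, assembling these pieces finishes the argument.
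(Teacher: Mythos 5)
Your construction is correct. Note that the paper does not actually prove this statement---it is imported verbatim as Lemma 6.4 of the cited reference [EI20]---so there is no in-paper proof to compare against; your argument (diagonals on two perpendicular great circles through a pole, the spherical Pythagorean relation $\cos h=\cos a\cos b$ on the four right triangles, and the observation that $\cos\alpha\cos\gamma=\cos\beta\cos\delta$ is exactly the compatibility condition for the resulting multiplicative system) is the standard route and is essentially the one taken in that reference. The only step worth spelling out is the ``elementary check'': the admissible window is $\max(|\cos\alpha|,|\cos\delta|)<|X|<\min(1,|\cos\alpha/\cos\beta|)$, and its nonemptiness uses the hypothesis once, via $|\cos\delta||\cos\beta|=|\cos\alpha||\cos\gamma|<|\cos\alpha|$; together with your pairing observation for the degenerate vanishing-cosine subcases, the proof is complete.
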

\begin{lemma}[Lemma 4.12, \cite{izmestiev-2017}]\label{lma2.5}
\textcolor{black}{If $Q_1$ is an orthodiagonal quadrilateral then it is one of the two following types}
 \begin{itemize}
        \item \textcolor{black}{of {\it elliptic} type, i.e. equation
            \begin{equation*}\label{eq4}
                \alpha \pm \beta \pm \gamma \pm \delta = 0\quad(\text{mod }2\pi)
            \end{equation*}
        has no solutions;}
        \item \textcolor{black}{a {\it deltoid}, i.e it has two pairs of equal adjacent sides, and an {\it antideltoid}, if it has two pairs of adjacent sides complementing each other to $\pi$.}
    \end{itemize}
\end{lemma}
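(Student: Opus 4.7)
The plan is to prove the contrapositive. Assume $Q_1$ is orthodiagonal, so that $\cos\alpha\cos\gamma=\cos\beta\cos\delta$ by \cref{lma5.3}, and assume that $Q_1$ is not of elliptic type, meaning some sign combination $\alpha\pm\beta\pm\gamma\pm\delta\equiv 0\pmod{2\pi}$ holds; the goal is then to conclude that $Q_1$ is a deltoid or antideltoid. The orthodiagonality relation is invariant under $\alpha\leftrightarrow\gamma$, under $\beta\leftrightarrow\delta$, under the pair swap $(\alpha,\gamma)\leftrightarrow(\beta,\delta)$, and under cyclic relabeling, so the eight sign equations collapse to three representative cases: the all-plus case $\alpha+\beta+\gamma+\delta=2\pi$, a three-plus-one-minus case (take $\delta=\alpha+\beta+\gamma$), and the two-plus-two-minus cases, split into the opposite-pair form $\alpha+\gamma=\beta+\delta$ and the adjacent-pair form $\alpha+\beta=\gamma+\delta$.

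I would first dispose of the two-plus-two-minus cases, which yield the deltoid conclusion. In the opposite-pair case $\alpha+\gamma=\beta+\delta$ one has $\cos(\alpha+\gamma)=\cos(\beta+\delta)$ automatically, so expanding orthodiagonality via the product-to-sum identity $2\cos X\cos Y=\cos(X-Y)+\cos(X+Y)$ reduces it to $\cos(\alpha-\gamma)=\cos(\beta-\delta)$; the two sign branches $\alpha-\gamma\equiv\pm(\beta-\delta)\pmod{2\pi}$, combined with the hypothesis, give $\alpha=\beta,\,\gamma=\delta$ or $\alpha=\delta,\,\beta=\gamma$, both deltoids. The adjacent-pair case is dual: $\alpha+\beta=\gamma+\delta$ provides $\cos(\alpha-\gamma)=\cos(\beta-\delta)$ for free, orthodiagonality reduces to $\cos(\alpha+\gamma)=\cos(\beta+\delta)$, and one branch yields $\alpha=\delta,\,\beta=\gamma$ while the other produces $\alpha+\beta+\gamma+\delta=2\pi$, reducing to the all-plus case. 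In the all-plus case I would substitute $\gamma=2\pi-\alpha-\beta-\delta$ and apply product-to-sum; orthodiagonality collapses to $\cos(2\alpha+\beta+\delta)=\cos(\beta-\delta)$, whose two branches force $\alpha+\delta=\pi$ or $\alpha+\beta=\pi$, and the complementary pair then also sums to $\pi$ by the all-plus hypothesis---an antideltoid.

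Finally, the three-plus-one-minus case $\delta=\alpha+\beta+\gamma$ is shown to be incompatible with orthodiagonality on $(0,\pi)$. Substituting into orthodiagonality yields the identity $\cos(\alpha-\gamma)=\cos(2\beta+\alpha+\gamma)$; its two branches give $\beta+\gamma=\pi$ or $\alpha+\beta=\pi$, each of which, together with $\delta=\alpha+\beta+\gamma$, drives some side out of $(0,\pi)$, and the wrap-around twin $\alpha+\beta+\gamma-\delta=2\pi$ is handled identically. The main obstacle is bookkeeping: organizing the sign-case reductions via the symmetries of orthodiagonality and carefully tracking modular $2\pi$ shifts so no branch is silently dropped; once this is arranged, each individual case reduces to a short trigonometric manipulation.
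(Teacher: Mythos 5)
Your argument is correct, but it is worth noting that the paper itself offers no proof of this statement: it is imported verbatim as Lemma~4.12 of Izmestiev \cite{izmestiev-2017}, so there is nothing in the text to compare against line by line. What you supply is a self-contained elementary derivation, and it checks out. The logical reduction (orthodiagonal and non-elliptic $\Rightarrow$ (anti)deltoid) is the right reading of the dichotomy, and the symmetry group of $\cos\alpha\cos\gamma=\cos\beta\cos\delta$ does collapse the eight sign equations to your three representatives. The individual cases land where you say: in the opposite-pair case $\alpha+\gamma=\beta+\delta$ the product-to-sum identity leaves $\cos(\alpha-\gamma)=\cos(\beta-\delta)$, and since both differences lie in $(-\pi,\pi)$ the two branches give $\alpha=\beta,\gamma=\delta$ or $\alpha=\delta,\beta=\gamma$ with no spurious $2\pi$ shift; in the all-plus case the branches of $\cos(2\alpha+\beta+\delta)=\cos(\beta-\delta)$ force $\alpha+\delta=\pi$ or $\alpha+\beta=\pi$, and the complementary pair sums to $\pi$ automatically; and in the three-plus-one-minus case both branches of $\cos(\alpha-\gamma)=\cos(\alpha+2\beta+\gamma)$ force a side out of $(0,\pi)$, for the wrap-around twin as well since $\cos\delta=\cos(\alpha+\beta+\gamma)$ either way. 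The only delicate points are exactly the ones you flag — verifying that the two-minus equations hold exactly (their left-hand sides lie in $(-2\pi,2\pi)$) and that $k=1$ is forced in each $2k\pi$ branch — and you handle them. Compared with Izmestiev's own treatment, which sits inside his analysis of the factorized coefficients of the biquadratic \cref{eq2}, your route is more pedestrian but has the advantage of using nothing beyond \cref{lma5.3} and product-to-sum identities, so it could stand alone in this paper without the external reference.
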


\begin{figure}[htb]
    \centering
    \begin{tikzpicture}
        \node[anchor=south west,inner sep=0] at (0,0) {\includegraphics[width=0.3\textwidth]{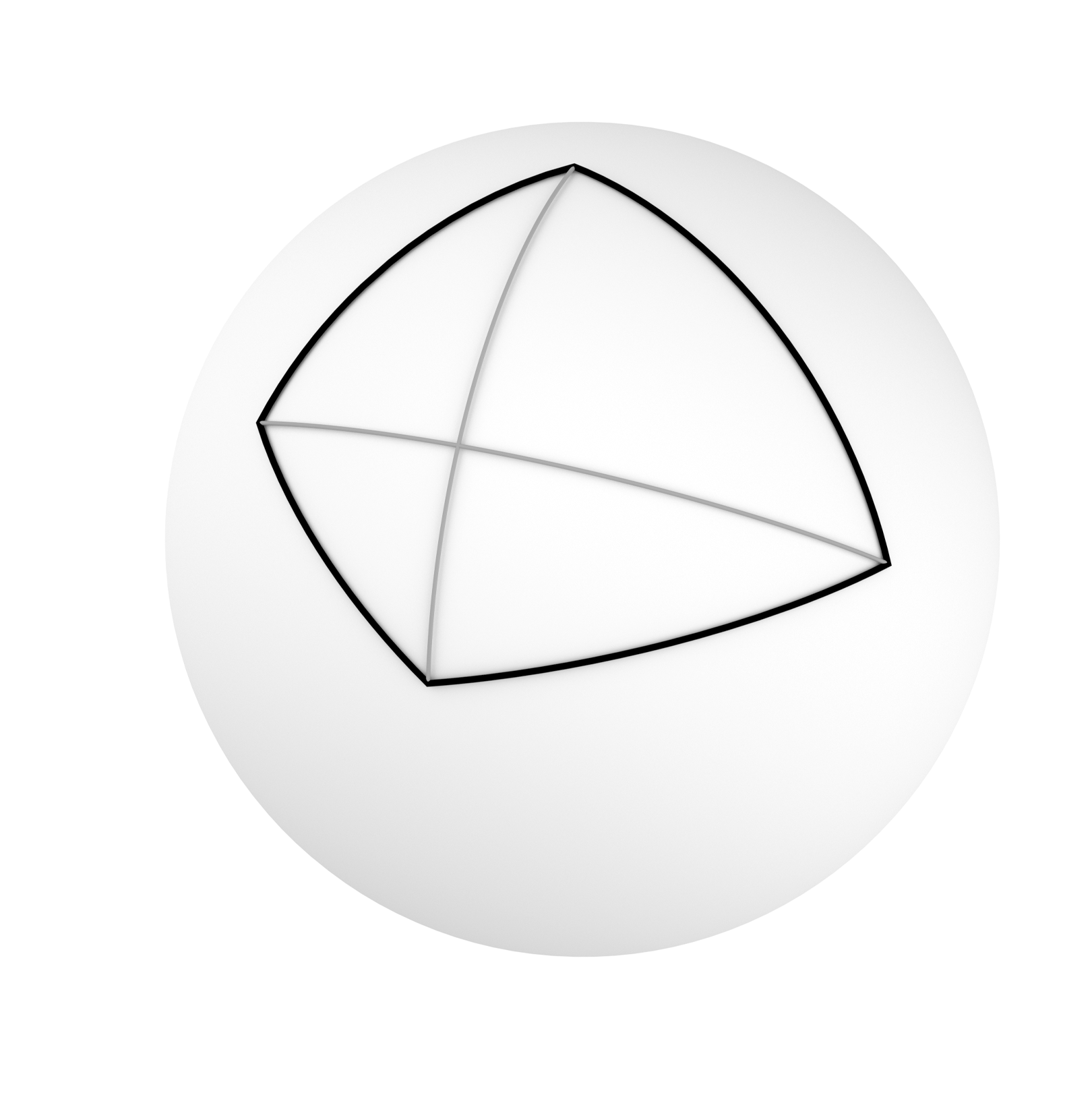}};
        \draw[thick] (2.31, 3.01) -- (2.35, 3.2) -- (2.2, 3.23);
        \draw (3.8, 3.7) node {$\alpha$};
        \draw (1.7, 4.1) node {$\beta$};
        \draw (1.3, 2.4) node {$\gamma$};
        \draw (3.1, 1.8) node {$\delta$};    
    \end{tikzpicture}
    \begin{tikzpicture}
        \node[anchor=south west,inner sep=0] at (0,0) {\includegraphics[width=0.3\textwidth]{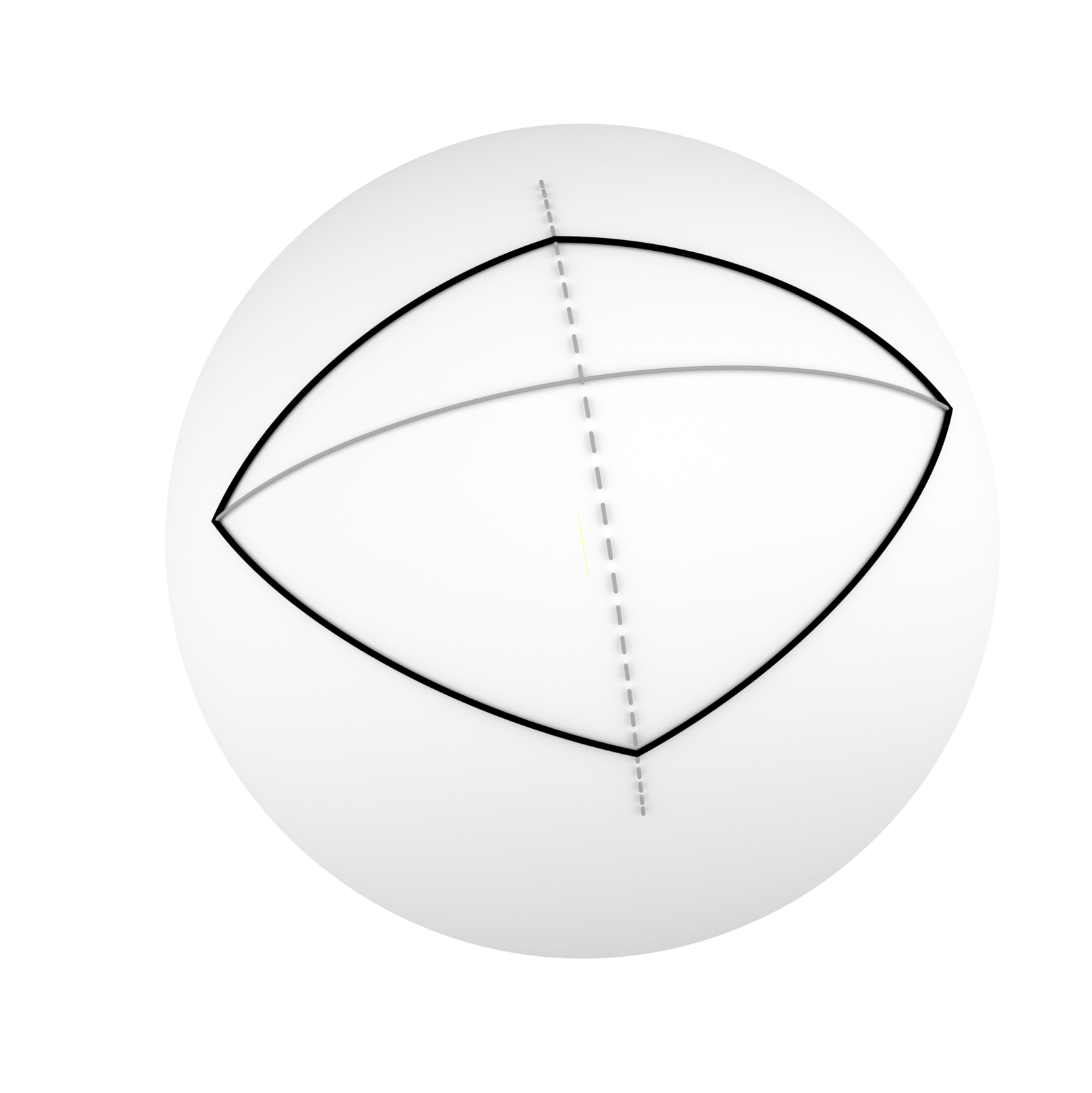}};
        \draw[thick, rotate around ={10:(2.55, 3.3)}] (2.55, 3.3) -- (2.55, 3.15) -- (2.72, 3.15);
        \draw (3.6, 4) node {$\alpha$};
        \draw (1.6, 3.7) node {$\alpha$};
        \draw (1.6, 1.9) node {$\delta$};
        \draw (4, 2.1) node {$\delta$};    
    \end{tikzpicture}
    \begin{tikzpicture}
        \node[anchor=south west,inner sep=0] at (0,0) {\includegraphics[width=0.3\textwidth]{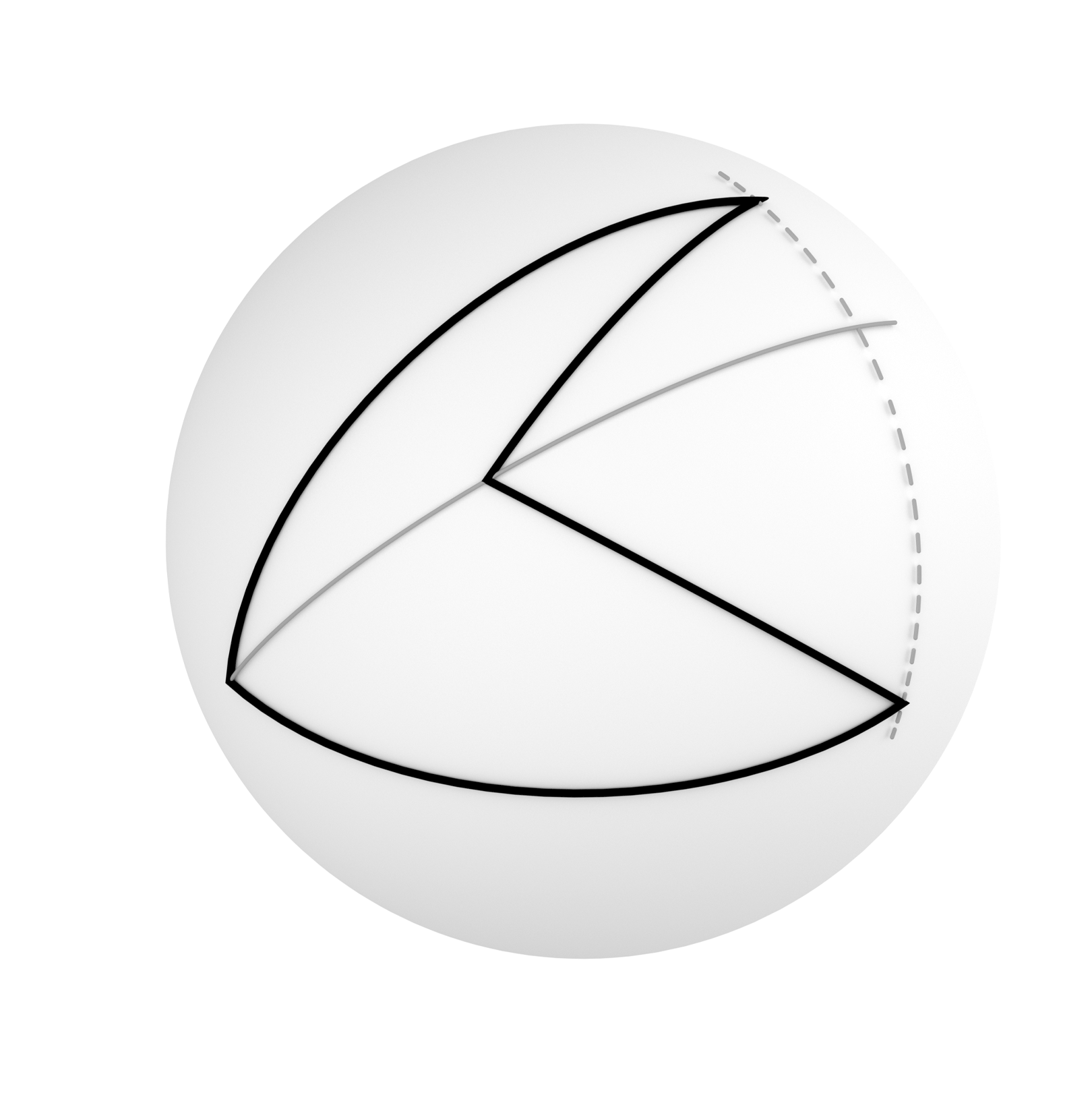}};
        \draw[thick, rotate around = {25:(3.82, 3.55)}] (3.82, 3.55) -- (3.8, 3.75) -- (3.97, 3.74);
        \draw (3, 3.5) node {$\alpha$};
        \draw (1.4, 3.3) node[rotate=40] {$\pi - \alpha$};
        \draw (2.5, 1.2) node {$\pi - \delta$};
        \draw (3.2, 2.6) node {$\delta$};    
    \end{tikzpicture}  
\caption{\textcolor{black}{Left: general orthodiagonal quad.
Middle: For a deltoid, one diagonal is a symmetry axis (dashed). The vertices on it are the apices. Right: Reflecting a vertex of a deltoid that is not an apex at the center of the sphere, one obtains an antideltoid. Its
apices agree with the ones of the deltoid from which it
has been derived.}
} \label{fig:orthodiagonal-quads}
\end{figure}
The case when $\alpha_i = \beta_i = \gamma_i = \delta_i = \frac{\pi}{2}$ is excluded, as it leads only to trivial deformations. We refer to a vertex of a quadrilateral by naming the two sides incident to it. We say that an (anti)deltoid has \emph{apices} $\alpha \delta$ and $\beta \gamma$, if $\alpha = \delta$, $\beta = \gamma$ or $\alpha + \delta = \pi = \beta + \gamma$. \textcolor{black}{For an
illustration, see Fig.~\ref{fig:orthodiagonal-quads}.}

\begin{definition}
    An (anti)deltoid $Q_1$ is said to be {\it frontally} coupled with $Q_2$ if the common vertex of $Q_1$ and~$Q_2$ is an apex of $Q_1$. Otherwise, $Q_1$ is said to be {\it laterally} coupled with $Q_2$.    
\end{definition}
\begin{definition}\label{def2.6}
Let $Q$ be an orthodiagonal quadrilateral. Following \cite{izmestiev-2017}, we define the involution factors at each of its vertices, excluding the apices if $Q$ is an (anti)deltoid, as follows.

The involution factor at the vertex $\alpha \delta$ is
$$
\lambda := 
    \begin{cases}
        \frac{\tan(\delta) + \tan(\alpha)}{\tan(\delta) - \tan(\alpha)} \text{, if } \alpha \neq \frac{\pi}{2} \text{ or } \delta \neq \frac{\pi}{2},\\
        \frac{\cos(\beta) + \cos(\gamma)}{\cos(\beta) - \cos(\gamma)} \text{, if } \alpha = \delta = \frac{\pi}{2}.
    \end{cases}
$$

Similarly, the involution factor at the vertex $\gamma \delta$ is
$$
\mu :=
    \begin{cases}
        \frac{\tan(\delta) + \tan(\gamma)}{\tan(\delta) - \tan(\gamma)} \text{, if } \gamma \neq \frac{\pi}{2} \text{ or } \delta \neq \frac{\pi}{2},\\
        \frac{\cos(\beta) + \cos(\alpha)}{\cos(\beta) - \cos(\alpha)} \text{, if } \gamma = \delta = \frac{\pi}{2}.
    \end{cases}
$$

Besides, for an orthodiagonal quadrilateral of elliptic type we put 
$$
\nu :=
\begin{cases}
\frac{(\lambda - 1)(\mu - 1)}{\cos(\delta)} \text{, if } \delta \neq \frac{\pi}{2},\\
2(\mu - 1) \tan(\alpha) \text{, if } \delta = \gamma = \frac{\pi}{2},\\
2(\lambda - 1) \tan(\gamma) \text{, if } \delta = \alpha = \frac{\pi}{2}.
\end{cases}
$$

For an (anti)deltoid with apex $\gamma \delta$ we put
$$
\xi :=
    \begin{cases}
        \frac{\lambda - 1}{\cos(\delta)} \text{, if } \delta \neq \frac{\pi}{2},\\
        2 \tan(\alpha) \text{, if } \delta = \gamma = \frac{\pi}{2}.
    \end{cases}
$$
\end{definition}
The involution factors are well-defined real numbers different from 0. For example, if we consider
$\delta = \alpha = \gamma = \frac{\pi}{2}$, then 
$$\nu = 2(\mu - 1) \tan(\alpha) = 2\left(\frac{\cos\beta + \cos\alpha}{\cos\beta - \cos\alpha} - 1\right)\frac{\sin\alpha}{\cos\alpha} = \frac{4\cos\alpha}{\cos\beta - \cos\alpha} \frac{\sin\alpha}{\cos\alpha} = \frac{4}{\cos\beta}=2(\lambda - 1) \tan(\gamma).$$
If $\alpha = \frac{\pi}{2}$ and $\delta \neq \frac{\pi}{2}$, then $\lambda = \frac{\infty}{-\infty} = -1$.

\textcolor{black}{These parameters allow us
to abbreviate the equation of the configuration space of an orthodiagonal quadrilateral
in the following way.} 
\begin{lemma}[Corollary 4.15, \cite{izmestiev-2017}]\label{lma2.7}
The configuration space of an orthodiagonal quadrilateral~$Q_1$ has the equation 
\begin{equation}\label{eq5}
(x_2 + \lambda_1 x_2^{-1})(x_1 + \mu_1 x_1^{-1}) = \nu_1 \text{, if $Q_1$ is elliptic} 
\end{equation}
\begin{equation}
x_2 + \lambda_1 x_2^{-1}= \xi_1 x_1^{n_1} \text{, if $Q_1$ is an (anti)deltoid with apex } \alpha \beta
\end{equation}
Here $n_1 = 1$, if $Q_1$ is a deltoid, and $n_1 = -1$, if $Q_1$ is an antideltoid; $\lambda_1$, $\mu_1$, $\nu_1$, $\xi_1$ are as in~\cref{def2.6}.
\end{lemma}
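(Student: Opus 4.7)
The plan is to derive the product forms directly from Bricard's equation \cref{eq2}, using orthodiagonality (\cref{lma5.3}) to force the separable factorization one needs; the (anti)deltoid case then falls out as a degeneration. First I rewrite every coefficient with the identity $\sin A\sin B=\tfrac12(\cos(A-B)-\cos(A+B))$, getting four expressions of the form $\tfrac12\bigl(\cos\beta_1-\cos(\alpha_1\pm\gamma_1\pm\delta_1)\bigr)$. A short sum-to-product calculation then yields
\[
c_{22}c_{00}-c_{20}c_{02} \;=\; \sin\alpha_1\sin\gamma_1\,\bigl(\cos\beta_1\cos\delta_1-\cos\alpha_1\cos\gamma_1\bigr),
\]
so by \cref{lma5.3} orthodiagonality is exactly the separability identity $c_{22}c_{00}=c_{02}c_{20}$.

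In the elliptic case all four coefficients are nonzero. Dividing \cref{eq2} by $c_{22}x_1x_2$ and using the separability identity, the non-mixed terms assemble into a product, and the equation becomes
\[
\bigl(x_2+\tfrac{c_{20}}{c_{22}}x_2^{-1}\bigr)\bigl(x_1+\tfrac{c_{02}}{c_{22}}x_1^{-1}\bigr)=-\tfrac{2c_{11}}{c_{22}},
\]
which already has the required shape. What remains is to verify that these three coefficient ratios coincide with the involution factors $\lambda_1,\mu_1,\nu_1$ of \cref{def2.6}. This is a trigonometric rewriting: for $\alpha_1,\delta_1\ne\pi/2$ one applies product-to-sum and the orthodiagonal identity to bring $c_{02}/c_{22}$ into the form $(\tan\delta_1+\tan\alpha_1)/(\tan\delta_1-\tan\alpha_1)$, and analogously for the other ratios; the sub-cases where one of the planar angles equals $\pi/2$ follow by taking the corresponding limit of the same computation and match the alternative expressions in \cref{def2.6}.

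The (anti)deltoid case is a degeneration of the same argument. If $Q_1$ is a deltoid with apex $\alpha\beta$ then $\alpha_1=\beta_1$ and $\gamma_1=\delta_1$; direct substitution into the definitions of the $c_{ij}$ gives $c_{22}=c_{02}=0$, and \cref{eq2} collapses to an equation linear in $x_1$, namely $x_2+\tfrac{c_{00}}{c_{20}}x_2^{-1}=-\tfrac{2c_{11}}{c_{20}}x_1$, so $n_1=1$. If $Q_1$ is an antideltoid with apex $\alpha\beta$ then $\alpha_1+\beta_1=\pi=\gamma_1+\delta_1$, which instead makes $c_{20}=c_{00}=0$; the surviving terms are divisible by $x_1$ and rearrange into $x_2+\tfrac{c_{02}}{c_{22}}x_2^{-1}=-\tfrac{2c_{11}}{c_{22}}x_1^{-1}$, giving $n_1=-1$. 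The same trigonometric identification as in the elliptic case confirms that the emerging $\lambda_1,\xi_1$ agree with \cref{def2.6}.

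The main obstacle will be the last step in each case: matching the bare coefficient ratios to the closed-form involution factors of \cref{def2.6}, while correctly handling each degenerate sub-case where one of $\alpha_1,\gamma_1,\delta_1$ equals $\pi/2$. These are bookkeeping rather than conceptual difficulties, but they are precisely the reason \cref{def2.6} splits into several cases rather than exhibiting a single uniform formula.
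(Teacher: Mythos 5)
The paper does not prove this lemma at all --- it is imported verbatim as Corollary 4.15 of Izmestiev \cite{izmestiev-2017} --- so your derivation from Bricard's equation \cref{eq2} is a genuine addition rather than a retelling. The argument is sound and I could verify its two load-bearing claims. The separability identity is correct: writing each coefficient as $\tfrac12\bigl(\cos\beta_1-\cos(\alpha_1\pm\gamma_1\pm\delta_1)\bigr)$ and applying sum-to-product does give $c_{22}c_{00}-c_{20}c_{02}=\sin\alpha_1\sin\gamma_1(\cos\beta_1\cos\delta_1-\cos\alpha_1\cos\gamma_1)$, and ellipticity is exactly the condition that all four corner coefficients are nonzero, so the division and refactoring are legitimate. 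The deferred trigonometric identification also works out as you predict; for instance, under orthodiagonality one finds
\begin{equation*}
\frac{c_{02}}{c_{22}}=\frac{\sin(\alpha_1+\delta_1)}{\sin(\delta_1-\alpha_1)}=\frac{\tan\delta_1+\tan\alpha_1}{\tan\delta_1-\tan\alpha_1}=\lambda_1,\qquad
\frac{c_{20}}{c_{22}}=\frac{\sin(\gamma_1+\delta_1)}{\sin(\delta_1-\gamma_1)}=\mu_1,\qquad
-\frac{2c_{11}}{c_{22}}=\frac{(\lambda_1-1)(\mu_1-1)}{\cos\delta_1}=\nu_1,
\end{equation*}
and the degenerations $c_{22}=c_{02}=0$ (deltoid) and $c_{20}=c_{00}=0$ (antideltoid) are exactly as you state, with the emerging ratios $c_{00}/c_{20}$ and $-2c_{11}/c_{20}$ matching $\lambda_1$ and $\xi_1$. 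Two small corrections: in your displayed elliptic equation the two ratios are attached to the wrong factors --- expanding $(x_2+\lambda_1x_2^{-1})(x_1+\mu_1x_1^{-1})$ shows that $c_{20}/c_{22}$ (the coefficient of $x_2x_1^{-1}$) must be $\mu_1$, the factor carried by $x_1$, and $c_{02}/c_{22}$ must be $\lambda_1$; and in the deltoid case the ratio that must be matched to $\lambda_1$ is $c_{00}/c_{20}$, not the same $c_{02}/c_{22}$ as in the elliptic case, so the closing sentence ``the same trigonometric identification'' covers a slightly different (though entirely analogous) computation. Neither issue affects the validity of the approach.
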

\textcolor{black}{The main property of the configuration space $Z_1$ of an elliptic orthodiagonal quadrilateral is that it admits two involutions
    \begin{equation*}
        i_1 :Z_1 \to Z_1, \quad j_1:Z_1 \to Z_1
    \end{equation*}
    \begin{equation*}
        i_1:(x_2, x_1) \to (x_2, x'_1), \quad j_1:(x_2, x_1) \to (x'_2, x_1)   
    \end{equation*}
    \begin{equation*}
        i_1(x_2, x_1) = (x_2, \mu_1 x_1^{-1}), \quad j_1(x_2, x_1) = (\lambda_1 x_2^{-1}, x_1) 
    \end{equation*}
and an (anti)deltoid admits only one of the involutions depending on what involution factor is defined.
Geometrically, involutions $i_1,j_1$ act by folding the quadrilateral along one of its diagonals, see \cref{fig6}.} 

\textcolor{black}{For convenience, we write $j_1(x_2) = x'_2$ instead of $j_1(x_2, x_1) = (x_2', x_1)$, and analogously for~$i_1$.}
\begin{figure}[ht!]
    \centering
    \scalebox{0.75}{
        \begin{tikzpicture}
            \node[anchor=south west,inner sep=0] at (0,0) {\includegraphics[width=\textwidth]{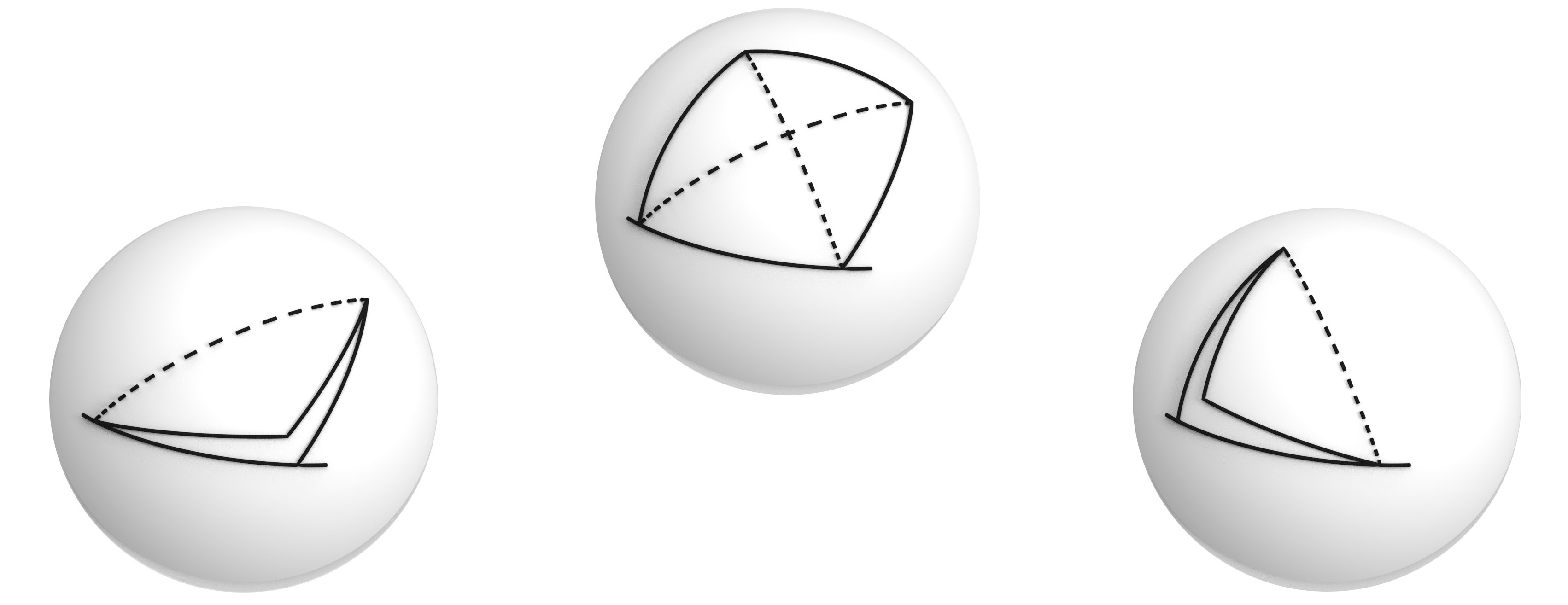}};
            \draw[<-, ultra thick, rotate around = {30:(6.3, 4.1)}] (4.5, 4.1) -- (6.3, 4.1)  node[above, midway] {$i_1$};
            \draw[->, ultra thick, rotate around = {-30:(10.75, 4.1)}] (10.75, 4.1) -- (12.55, 4.1) node[above, midway] {$j_1$};
    
            \draw (7.9, 3.6) node {$\delta_1$};
            \draw (7.1, 5.2) node {$\gamma_1$};
            \draw (9.2, 6.1) node {$\beta_1$};
            \draw (9.9, 4.5) node {$\alpha_1$};
            \draw[ultra thick] (6.85, 4.2) arc(160:124:10pt);
            \draw (6.8, 4.55) node {$\phi_1$};
            \draw[ultra thick] (9.4, 3.65) arc(0:45:10pt);
            \draw (9.6, 3.9) node {$\phi_2$};
            
            \draw (2, 1.4) node {$\delta_1$};
            \draw[ultra thick] (0.95, 2.05) arc(160:-10:6pt);
            \draw (1.1, 2.45) node {$\phi_1'$};
            \draw[ultra thick] (3.5, 1.51) arc(0:45:10pt);
            \draw (3.7, 1.8) node {$\phi_2$};
            \draw (2.3, 2) node {$\gamma_1$};
            \draw (3.3, 2.5) node {$\beta_1$};
            \draw (4, 2.3) node {$\alpha_1$};
    
            \draw (13.7, 1.5) node {$\delta_1$};
            \draw[ultra thick] (15.15, 1.54) arc(0:165:7pt);
            \draw (15.1, 2) node {$\phi_2'$};
            \draw[ultra thick] (12.67, 2.05) arc(160:121:10pt);
            \draw (12.65, 2.4) node {$\phi_1$};
            \draw (12.9, 3.1) node {$\gamma_1$};
            \draw (13.6, 3.1) node {$\beta_1$};
            \draw (14, 2) node {$\alpha_1$};
        \end{tikzpicture}    
    }
    \caption{\textcolor{black}{Action of involutions on a spherical quadrilateral.}}
    \label{fig6}
\end{figure}
\section{Composition of four-bar linkages to a mechanism}\label{sec3}
Our goal is to form a flexible mechanism by means of four-bar linkages. Firstly, we formulate an algebraic definition of the configuration space of coupled four-bar linkages (see Section~\ref{coupled-linkages}). Secondly, we show that orthodiagonal quadrilaterals are allowed to connect to each other by means of a so-called involutive coupling with a clear algebraic meaning (see Section~\ref{involutive-coupling}). Finally, in Section~\ref{matching-involutive}, we show how to match them into a mechanism using properties of orthodiagonal quadrilaterals with involutive coupling.

\subsection{Configuration space of two coupled four-bar linkages}\label{coupled-linkages}
Now we can discuss the coupling of two four-bar linkages as shown in \cref{fig3}. Its configuration space  is described as the following set of solutions
\begin{equation}
\widehat{Z}_{12} = \{(x_1, x_2, y_2, y_3) \in (\mathbb{C}\text{P}^1)^4|(x_2, x_1) \in Z_1, (y_2, y_3) \in \widehat{Z}_2\}
\end{equation}
where $Z_1$ and $\widehat{Z}_2$ are solutions sets of $P_1(x_2, x_1) = 0$ and $P_2(y_2, y_3) = 0$, respectively. \textcolor{black}{Since the relation \cref{eq8} is just a M\"obius transformation of $\mathbb{C}\text{P}^1$ to $\mathbb{C}\text{P}^1$, the complex algebraic curve}
$$
\widehat{Z}_2 = \{(y_2, y_3) \in (\mathbb{C}\text{P}^1)^2| P_2(y_2, y_3) = 0\}
$$
can be identified with
$$
Z_2 = \{(x_2, x_3) \in (\mathbb{C}\text{P}^1)^2| y_2 = \frac{x_2 + F_2}{1 - F_2 x_2}, y_3 = \frac{x_3 + F_3}{1 - F_3 x_3} \text{, where } (y_2, y_3) \in \widehat{Z}_2\}.
$$
Therefore, we redefine $Z_{12}$ as the following curve,
\begin{equation}\label{eq9}
Z_{12} = \{(x_1, x_2, x_3) \in (\mathbb{C}\text{P}^1)^3|(x_1, x_2) \in Z_1, (x_2, x_3) \in Z_2\}.
\end{equation}

The projection of $Z_{12}$ to the $(x_1, x_3)$-plane is the zero set of the resultant $R_{12} = \text{res}_{x_2}(P_1, \tilde{P}_2) = 0$. The spaces $Z_{3}, \widehat{Z}_4$ and $Z_{34}, \widehat{Z}_{34}$ are defined analogously for $P_3(x_4, x_3), P_4(y_4, y_1)$.

\textcolor{black}{An important point in the classification of $3 \times 3$ building blocks \cite{izmestiev-2017} is to study all possible couplings and characterize them according to the property of reducibility and involutivity. Then, in order to compose a matching, two couplings must have a common component. A component of $Z_i$ is called {\it trivial}, if it has the form $x_i = \text{const}$ or $x_{i + 1} = \text{const}$. For non-trivial components, the restrictions of the projection maps $Z_{12} \to Z_{i}$, where $i = 1, 2$ are branched covers between Riemann surfaces. If the solution set $Z_{all}$ of the system \cref{eq3} is one dimensional, then the map $Z_{all} \to Z_{12}$ is also a branched cover. Izmestiev based his classification \cite{izmestiev-2017} on a study of a commutative diagram of these branched covers. All maps in this diagram are at most two-fold. Since $\widehat{Z}_{12}$ and $\widehat{Z}_{34}$ are homeomorphic to $Z_{12}$ and $Z_{34}$, as it was shown before, the commutative diagram for the generalized case should coincide with the planar case. Therefore, we can determine the {\it orthodiagonal involutive} type similarly to Definition 5.7 in \cite{izmestiev-2017}.}
\subsection{Involutive coupling}\label{involutive-coupling}
\textcolor{black}{We consider a }special type of coupling $(Q_1, Q_2)$ such that the projection maps from $Z_{12}$ to $Z_2$ are two-fold.
\begin{definition}
A coupling of orthodiagonal quadrilaterals $(Q_1, Q_2)$ is called {\it involutive}, if $Z_{12}$ admits an involution $j_{12}$ that preserves $x_1, x_3$ but changes~$x_2$ almost everywhere,
$$
j_{12}:Z_{12} \to Z_{12},\quad j_{12}(x_1, x_2, x_3) = (x_1, x_2', x_3).
$$
\end{definition}
\begin{lemma}\label{lma4.2}
A coupling of orthodiagonal quadrilaterals $(Q_1, Q_2)$ is involutive if and only if parameters $F_2, \lambda_1, \lambda_2$ satisfy:
    \begin{equation}\label{eq10}
        \begin{tabular}{ |c|c|c|c| } 
            \hline
              $F_2$ & $0$ & $\mathbb{R}\backslash 0$ & $\pm \infty$ \\ 
            \hline
               & $\lambda_1 = \lambda_2$ & $\lambda_1 = \lambda_2 = -1$ &$\lambda_1 = \lambda_2^{-1}$\\
            \hline
        \end{tabular}
    \end{equation}
\end{lemma}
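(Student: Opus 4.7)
The plan is to identify the only candidate for the involution $j_{12}$ via \cref{lma2.7}, translate the well-definedness of $j_{12}$ on $Z_{12}$ into a rational-function identity in $x_2$, and then read off the parameter conditions by coefficient comparison. Any nontrivial involution of $Z_{12}$ fixing $x_1$ descends, under the projection $Z_{12}\to Z_1$, to an involution of $Z_1$ fixing $x_1$. For fixed $x_1$, the equation of \cref{lma2.7} reduces to a quadratic relation in $x_2$ whose two roots multiply to $\lambda_1$ (this is evident in the elliptic case, and holds in the (anti)deltoid case as well via the equation $x_2+\lambda_1 x_2^{-1}=\xi_1 x_1^{n_1}$), so the induced involution must be $j_1:x_2\mapsto\lambda_1/x_2$. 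The analogous argument applied to $Z_2$ forces $j_{12}$ to descend to $y_2\mapsto \lambda_2/y_2$. Thus an involutive coupling exists precisely when, under the Möbius substitution $y_2=(x_2+F_2)/(1-F_2x_2)$, the map $x_2\mapsto\lambda_1/x_2$ is conjugate to $y_2\mapsto\lambda_2/y_2$; equivalently, with $x_2'=\lambda_1/x_2$ and $y_2'=y_2(x_2')$, the product $y_2\,y_2'$ must equal $\lambda_2$ identically on $\mathbb{C}\text{P}^1$.

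A direct computation gives
\begin{equation*}
y_2(x_2)\,y_2(\lambda_1/x_2)\;=\;\frac{F_2\,x_2^{2}+(\lambda_1+F_2^{2})\,x_2+\lambda_1 F_2}{-F_2\,x_2^{2}+(1+\lambda_1 F_2^{2})\,x_2-\lambda_1 F_2}.
\end{equation*}
Setting this rational function equal to the constant $\lambda_2$ and matching coefficients of $x_2^2$, $x_2$, and the constant term produces the three scalar conditions $F_2(1+\lambda_2)=0$, $\lambda_1 F_2(1+\lambda_2)=0$, and $\lambda_1+F_2^{2}=\lambda_2(1+\lambda_1 F_2^{2})$. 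If $F_2=0$, the third condition collapses to $\lambda_1=\lambda_2$. If $F_2\in\mathbb{R}\setminus\{0\}$, the first two force $\lambda_2=-1$, and then the third becomes $(1+F_2^{2})(\lambda_1+1)=0$; since $F_2$ is real we conclude $\lambda_1=-1$. Finally, when $F_2=\pm\infty$ the coupling degenerates projectively to $y_2=-x_2^{-1}$, whence $y_2\,y_2'=\lambda_1^{-1}$ and hence $\lambda_2=\lambda_1^{-1}$. Conversely, in each of the three listed regimes the prescription $(x_1,x_2,x_3)\mapsto(x_1,\lambda_1/x_2,x_3)$ is manifestly well-defined and restricts to an involution of $Z_{12}$, supplying the required $j_{12}$.

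The main delicate point is the uniqueness step: verifying that $j_1$ is the only nontrivial involution on $Z_1$ fixing $x_1$. In the elliptic case this is immediate, since $Z_1\to\{x_1\text{-line}\}$ is a double cover whose deck transformation is $j_1$. In the (anti)deltoid cases, and particularly if $x_1$ sits at an apex vertex where $\lambda_1$ may be undefined, I would argue directly from the equation $x_2+\lambda_1 x_2^{-1}=\xi_1 x_1^{n_1}$, noting that its right-hand side determines $x_2+\lambda_1 x_2^{-1}$ and hence the unordered pair $\{x_2,\lambda_1/x_2\}$. A secondary technical issue is handling $F_2=\pm\infty$ uniformly with the other cases; I would do this by clearing denominators in a common projective chart, so that the three rows of the table emerge from a single coefficient-comparison argument rather than an ad hoc limiting computation.
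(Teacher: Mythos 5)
Your proposal is correct and follows essentially the same route as the paper: both identify the involutions $x_2\mapsto\lambda_1/x_2$ and $y_2\mapsto\lambda_2/y_2$, pull the latter back through the M\"obius relation $y_2=(x_2+F_2)/(1-F_2x_2)$, and extract the table by comparing coefficients of a quadratic identity in $x_2$ (you phrase the compatibility as $y_2(x_2)\,y_2(\lambda_1/x_2)=\lambda_2$, the paper as $\lambda_1/x_2=j_2(x_2)$, which are equivalent). Your explicit uniqueness argument for $j_1$ via the deck transformation of the double cover is a point the paper leaves implicit, but it does not change the substance of the proof.
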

\begin{proof}
A coupling $(Q_1, Q_2)$ is involutive iff involutions $j_1, j_2$ of $Z_1, Z_2$ satisfy $j_1(x_2) = j_2(x_2)$.

\textcolor{black}{The involution $j_2$ for $Z_2$ can be induced from $\hat{j}_{2}:\widehat{Z}_2 \to \widehat{Z}_2$, $\hat{j}_2(y_2) = \lambda_2 y^{-1}_2$ using the~relation~$y_2(x_2)$ from~\cref{eq8}. Denoting $x'_2 = j_2(x_2)$, $y'_2 = \hat{j}_2(y_2)$ and solving the following equation for $x'_2$, we get
\begin{equation*}
    \frac{x'_2 + F_2}{1 - F_2 x'_2} = \lambda_2\frac{1 - F_2 x_2}{x_2 + F_2}\, \Rightarrow \,
    j_2(x_2) = \frac{-(\lambda_2 + 1) F_2 x_2 + \lambda_2 - F_2^2}{(1 - \lambda_2 F_2^2) x_2 + (\lambda_2 + 1) F_2}.
\end{equation*}}

Therefore, $j_1(x_2) = j_2(x_2)$ if and only if the following equation,
\begin{equation}\label{eq11}
\frac{\lambda_1}{x_2} - \frac{-(\lambda_2 + 1) F_2 x_2 + \lambda_2 - F_2^2}{(1 - \lambda_2 F_2^2) x_2 + (\lambda_2 + 1) F_2} = 0,
\end{equation}
is true for any $x_2$. It is equivalent to the conditions \cref{eq10}.
\end{proof}
\begin{lemma}\label{lma3.4}
Let $(Q_1, Q_2)$ be an involutive coupling of orthodiagonal quadrilaterals. The quotient space $W = Z_{12} / j_{12}$ has the following form
    \begin{itemize}
        \item If $Q_1$ and $Q_2$ are both elliptic, then
            \begin{equation}\label{eq12}
                 k_2 \frac{4F_2 x_1^2 + \nu_1(1 - F_2^2)x_1 + 4F_2\mu_1}{(1 - F_2^2)x_1^2 - F_2\nu_1x_1 + \mu_1(1 - F_2^2)}\Bigg(\frac{x_3 + F_3}{1 - F_3 x_3} + \mu_2 \frac{1 - F_3 x_3}{x_3 + F_3}\Bigg) = \nu_2.
            \end{equation}
        \item If $Q_2$ is an (anti)deltoid laterally coupled to $Q_1$ that is elliptic, then 
            \begin{equation}\label{eq13}
                k_2 \frac{4F_2 x_1^2 + \nu_1(1 - F_2^2)x_1 + 4F_2\mu_1}{(1 - F_2^2)x_1^2 - F_2\nu_1x_1 + \mu_1(1 - F_2^2)} = \xi_2 \Bigg(\frac{x_3 + F_3}{1 - F_3 x_3}\Bigg)^{n_2}.
            \end{equation}
          \item If $Q_1,Q_2$ are laterally coupled (anti)deltoids, then 
            \begin{equation}\label{laterally}
                k_2 \frac{(1-F_2^2)\xi_1 x_1^{n_1} + 4F_2}{-F_2 \xi_1 x_1^{n_1} + (1 - F_2^2)} = \xi_2 \Bigg(\frac{x_3 + F_3}{1 - F_3 x_3}\Bigg)^{n_2}.
            \end{equation}
    \end{itemize}
    Here $\mu_i, \nu_i, \xi_i, n_i$ are as in~\cref{lma2.7} and $k_2 = -\lambda_2$ if $F_2= \pm \infty$, or $k_2 = 1$ otherwise.
\end{lemma}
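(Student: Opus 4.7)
The plan is to exploit the fact that, by Lemma~\cref{lma4.2}, for an involutive coupling the two candidate involutions $j_1$ and $j_2$ induce the same map on the $x_2$--coordinate of $Z_{12}$, so $j_{12}$ acts as $x_2 \mapsto \lambda_1/x_2$ while fixing $x_1$ and $x_3$, and equivalently as $y_2 \mapsto \lambda_2/y_2$ after the M\"obius change of variables~\cref{eq8}. Consequently $W = Z_{12}/j_{12}$ sits naturally as a curve in the $(x_1,x_3)$--plane, and its defining equation is obtained by writing any $j_{12}$--invariant relation on $Z_{12}$ purely in terms of $x_1,x_3$. The privileged invariant I shall use is
\begin{equation*}
S \;:=\; y_2 + \lambda_2 y_2^{-1},
\end{equation*}
which is manifestly fixed by $\hat{j}_2$ and hence by $j_{12}$. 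The whole proof then has two legs: express $S$ as a function of $x_1$ via the configuration equation of $Z_1$, express $S$ as a function of $x_3$ via the configuration equation of $\hat{Z}_2$, and equate the two.

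For the first leg I substitute $y_2 = (x_2+F_2)/(1-F_2 x_2)$ into $S$ and obtain
\begin{equation*}
S \;=\; \frac{(x_2+F_2)^2 + \lambda_2 (1-F_2 x_2)^2}{(x_2+F_2)(1-F_2 x_2)} .
\end{equation*}
After dividing numerator and denominator by $x_2$ and splitting according to the three regimes of Lemma~\cref{lma4.2}, $S$ becomes a M\"obius function of the $j_1$--invariant $\sigma := x_2 + \lambda_1/x_2$: namely $S = \sigma$ when $F_2 = 0$ and $\lambda_1 = \lambda_2$; $S = ((1-F_2^2)\sigma + 4F_2)/(-F_2 \sigma + (1-F_2^2))$ when $F_2 \in \mathbb{R}\setminus 0$ and $\lambda_1 = \lambda_2 = -1$ (using $\sigma = x_2 - 1/x_2$); and $S = -\lambda_2 \sigma$ in the limit $F_2 \to \pm\infty$ with $\lambda_1 = \lambda_2^{-1}$, since then $y_2 \to -1/x_2$. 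Substituting the expression of $\sigma$ in terms of $x_1$ furnished by Lemma~\cref{lma2.7} (namely $\sigma = \nu_1 x_1/(x_1^2+\mu_1)$ for elliptic $Q_1$, or $\sigma = \xi_1 x_1^{n_1}$ for an (anti)deltoid $Q_1$) and clearing denominators produces exactly the left--hand sides of~\cref{eq12}, \cref{eq13}, and \cref{laterally}, with the coefficient $k_2$ absorbing the three regimes.

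For the second leg I invoke the equation of $\hat{Z}_2$ as given by Lemma~\cref{lma2.7}: in the elliptic case $S\cdot(y_3 + \mu_2 y_3^{-1}) = \nu_2$, while in the laterally coupled (anti)deltoid case $S = \xi_2 y_3^{n_2}$. Replacing $y_3$ via $y_3 = (x_3+F_3)/(1-F_3 x_3)$ then yields the right--hand sides of the three formulas. Combining the two legs gives the displayed equations; the three cases of the lemma correspond to the two choices for $Q_2$ (elliptic vs.\ laterally coupled (anti)deltoid) crossed with the two choices for $Q_1$ (elliptic vs.\ (anti)deltoid), with the mixed elliptic--deltoid case excluded in the statement by symmetry with~\cref{eq13}.

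The main technical obstacle I anticipate is the uniform treatment of the three regimes $F_2 = 0$, $F_2 \in \mathbb{R}\setminus 0$, $F_2 = \pm\infty$ inside a single closed--form expression. The finite--$F_2$ formula for $S(\sigma)$ degenerates as $F_2 \to \pm\infty$, and one must extract the leading $F_2^2$ terms in both numerator and denominator to see that the limit equals $-\lambda_2 \sigma$; this is precisely what forces the definition $k_2 = -\lambda_2$ in the infinite case and $k_2 = 1$ otherwise, and verifying that this substitution recovers the correct expression in each case is the only point requiring care beyond routine algebra.
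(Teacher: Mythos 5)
Your proposal is correct and follows essentially the same route as the paper: the paper likewise rewrites $y_2+\lambda_2 y_2^{-1}$ via \cref{eq8} and \cref{eq10} as a M\"obius function $k_2\frac{(1-F_2^2)w_2+4F_2}{-F_2 w_2+(1-F_2^2)}$ of $w_2=x_2+\lambda_1 x_2^{-1}$ (your $\sigma$), treating the three regimes $F_2=0$, $F_2\in\mathbb{R}\setminus 0$, $F_2=\pm\infty$ separately, and then substitutes $w_2=\nu_1 x_1/(x_1^2+\mu_1)$ (resp.\ $\xi_1 x_1^{n_1}$) and $y_3=(x_3+F_3)/(1-F_3x_3)$. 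Your framing of this quantity as the $j_{12}$--invariant is a nice conceptual gloss, but the computation is the same.
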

\begin{proof}

    We first consider the case when $Q_1, Q_2$ are both elliptic. The space $Z_{12}$ is described by the system
    $$
        \begin{cases}
        (x_2 + \frac{\lambda_1}{x_2})(x_1 + \frac{\mu_1}{x_1}) = \nu_1, \\
        (y_2 + \frac{\lambda_2}{y_2})(y_3 + \frac{\mu_2}{y_3}) = \nu_2.
        \end{cases}
    $$
    First, using \cref{eq8} and \cref{eq10} we rewrite the part involving $y_2$ in the second equation as follows:
    \begin{equation*}
      y_2 + \frac{\lambda_2}{y_2} = \frac{(1 + \lambda_2 F_2^2)x_2^2 + 2F_2(1 - \lambda_2)x_2 + (F_2^2 + \lambda_2)}{-F_2x_2^2 + (1 - F_2^2) x_2 + F_2} = \begin{cases}
          \mfrac{x_2^2 + \lambda_2}{x_2} = x_2 + \mfrac{\lambda_1}{x_2}, \quad \text{if } F_2 = 0, \\[2ex]
          \mfrac{(1 - F_2^2)x_2^2 + 4 F_2 x_2 + (F_2^2 - 1)}{-F_2x_2^2 + (1 - F_2^2) x_2 + F_2} = \mfrac{(1 - F_2^2)(x_2 - x_2^{-1}) + 4F_2}{-F_2(x_2 - x_2^{-1}) + (1 - F_2^2)}, \quad \text{if } F_2 \in \mathbb{R}\backslash 0,\\[2ex]
        \mfrac{\lambda_2 x_2^2 + 1}{-x_2} = -\lambda_2 (x_2 + \mfrac{\lambda_1}{x_2}), \quad \text{if } F_2 = \pm \infty.
      \end{cases}
    \end{equation*}
    Then we rewrite these three cases as a function of $w_2 = x_2 + \mfrac{\lambda_1}{x_2}$ and obtain the following equality:
    $$
    y_2 + \frac{\lambda_2}{y_2} = k_2 \frac{(1-F_2^2)w_2 + 4F_2}{-F_2 w_2 + (1 - F_2^2)},
    $$
    where $k_2 = -\lambda_2$ if $F_2= \pm \infty$, or $k_2 = 1$ otherwise.
    
    \textcolor{black}{After expressing $w_2$ as $\frac{\nu_1 x_1}{x_1^2 + \mu_1}$ from the first equation we substitute it in the second equation of the system using the above equality.  Using the expression \cref{eq8} for $y_3$ we obtain the desired result. The remaining cases (\ref{eq13}) and (\ref{laterally}) are treated in an analogous way.}

\end{proof}
\begin{definition}\label{def3.6}
    Orthodiagonal quadrilaterals $Q_1$ and $Q_2$ are called {\it compatible} if one of the~following holds:
    \begin{itemize}
        \item $Q_1$ and $Q_2$ are involutive;
        \item $Q_1$ and $Q_2$ are either a deltoid or antideltoid and they are frontally coupled.
    \end{itemize}
\end{definition}

\subsection{Matching of two involutive couplings}\label{matching-involutive}

In order to compose two couplings to a mechanism,
it is sufficient to suppose that $(Q_1, Q_2)$, $(Q_3, Q_4)$ are orthodiagonal involutive couplings and that the algebraic sets $Z_{12}/ j_{12}$, $Z_{34}/ j_{34}$ are identical. In this case, we define the class from the assumption that 
\begin{equation}\label{eq14}
Z_{12} / j_{12} = W = Z_{34}/ j_{34},
\end{equation}
where $W = \{(x_1, x_3) \in (\mathbb{C}P^1)^2|\,\exists x_2, x_4 \in \mathbb{C}P^1\text{ s. t. } (x_1, x_2, x_3) \in Z_{12}, (x_1, x_4, x_3) \in Z_{34}\}$.

We determine the conditions when \cref{eq14} holds and say such combination $(Q_1,Q_2,Q_3,Q_4)$ is of {\it orthodiagonal involutive (OI)} type. 

First, we notice the following property of involutive couplings.
\begin{lemma}\label{lma4.1}
    If $(Q_1, Q_2)$, $(Q_3, Q_4)$ are OI couplings and \cref{eq14} is satisfied, then $(Q_1, Q_4)$ and $(Q_2, Q_3)$ are compatible.
\end{lemma}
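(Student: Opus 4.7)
My plan is to compare the two explicit rational equations for $Z_{12}/j_{12}$ and $Z_{34}/j_{34}$ supplied by \cref{lma3.4}, viewed as subvarieties of the $(x_1,x_3)$-plane. Applied to the coupling $(Q_1,Q_2)$ in the fully elliptic case, \cref{lma3.4} gives a separated equation $A_{12}(x_1)\,B_{12}(x_3)=\nu_2/k_2$, where $A_{12}$ is the rational function of $x_1$ built from $\mu_1,\nu_1,F_2$ appearing in \cref{eq12}, and $B_{12}(x_3)=y_3+\mu_2 y_3^{-1}$ with $y_3=(x_3+F_3)/(1-F_3 x_3)$. Applied to $(Q_3,Q_4)$ the same formula yields $A_{34}(x_3)\,B_{34}(x_1)=\nu_4/k_4$, where $A_{34}$ is built from $\mu_3,\nu_3,F_4$ and $B_{34}(x_1)=y_1+\mu_4 y_1^{-1}$ for $y_1=(x_1+F_1)/(1-F_1 x_1)$. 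By the hypothesis \cref{eq14} these two equations cut out the same curve $W\subset(\mathbb{C}\text{P}^1)^2$.

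The key step is that the projection $W\to\mathbb{C}\text{P}^1$, $(x_1,x_3)\mapsto x_3$, is generically two-to-one, and its Galois involution on the $x_1$-coordinate can be computed from either equation and must agree. A direct substitution shows that $A_{12}$ is invariant under $x_1\mapsto\mu_1 x_1^{-1}$, so this M\"obius involution is one description of the fiber swap. From the second equation, $B_{34}$ is invariant under the M\"obius involution of $x_1$ obtained by lifting $y_1\mapsto\mu_4 y_1^{-1}$ through \cref{eq8}. Forcing these two involutions of $\mathbb{C}\text{P}^1$ to coincide produces an equation identical in shape to \cref{eq11}, with the substitutions $\lambda_1\mapsto\mu_1$, $\lambda_2\mapsto\mu_4$, $F_2\mapsto F_1$. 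Its solutions are exactly the three cases of \cref{lma4.2}, hence the coupling $(Q_1,Q_4)$ is involutive. Running the same argument with the roles of $x_1$ and $x_3$ swapped---using the involutions $y_3\mapsto\mu_2 y_3^{-1}$ and $x_3\mapsto\mu_3 x_3^{-1}$ together with the coupling parameter $F_3$---shows that $(Q_2,Q_3)$ is likewise involutive, so both pairs satisfy the first clause of \cref{def3.6}.

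If one or more of the $Q_i$ are (anti)deltoids laterally coupled, the relevant formula from \cref{lma3.4} becomes \cref{eq13} or \cref{laterally}. The separated product structure persists, but the $x_1$- or $x_3$-factor collapses from a genuine two-to-one expression to a single branch in $x_1^{n}$. In that degenerate situation the generic fiber of the corresponding projection of $W$ is a single point rather than a pair, and the fiber-matching principle above forces the corresponding pair $(Q_1,Q_4)$ or $(Q_2,Q_3)$ to consist of (anti)deltoids that are frontally coupled, i.e.\ the second clause of \cref{def3.6}. The main obstacle of the proof is therefore the systematic case analysis over the possible elliptic/(anti)deltoid types of the four quadrilaterals; in each combination the equality \cref{eq14} of the two separated forms forces either an involutive matching of the form \cref{lma4.2} or a frontal (anti)deltoid matching, and in either outcome \cref{def3.6} yields compatibility.
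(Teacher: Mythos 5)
Your proposal is correct and follows essentially the same route as the paper: both arguments read off the fiber-swapping involutions in $x_1$ (resp.\ $x_3$) from the two separated equations of \cref{lma3.4}, force them to coincide because \cref{eq14} says they describe the same curve $W$, and recognize the resulting condition as the involutivity criterion of \cref{lma4.2}, with the degenerate (anti)deltoid factors handled by the frontal-coupling clause of \cref{def3.6}. Your explicit framing via the Galois involution of the two-to-one projection is just a slightly more detailed justification of the paper's statement that ``the involutions for both equations must coincide.''
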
 
\begin{proof}
    We start with the case when the equation of $W$ is of the form \cref{eq12} (where $k_4 = -\lambda_4$ if $F_4 = \pm \infty$ or $k_4 = 1$ otherwise)
   \begin{equation}\label{eq15}
   k_2\frac{4F_2 x_1^2 + \nu_1(1 - F_2^2)x_1 + 4F_2\mu_1}{(1 - F_2^2)x_1^2 -F_2\nu_1x_1 + \mu_1(1 - F_2^2)} \Big(y_3 + \frac{\mu_2}{y_3}\Big) = \nu_2,
   \end{equation}
    \begin{equation*}
        k_4\frac{4F_4 x_3^2 + \nu_3(1 - F_4^2)x_3 + 4F_4\mu_3}{(1 - F_4^2)x_3^2 - F_4\nu_3x_3 + \mu_3(1 - F_4^2)}\Big(y_1 + \frac{\mu_4}{y_1}\Big) = \nu_4.
   \end{equation*}
    \textcolor{black}{Then the involutions for $x_1, y_3$ and $x_3, y_1$ are defined as follows:
    \begin{equation*}
       i_1(x_1) = \frac{\mu_1}{x_1}, \quad \hat{i}_2(y_3) = \frac{\mu_2}{y_3} \quad \text{and} \quad  i_3(x_3) = \frac{\mu_3}{x_3}, \quad \hat{i}_4(y_1) = \frac{\mu_4}{y_1},
    \end{equation*}
    which can be checked by direct substitution to the equations.}
    
   Since $Z_{12} / j_{12}, Z_{34} / j_{34}$ are defined by equations \cref{eq15} respectively, the involutions for both equations must coincide (see \cref{eq11}). For the equations of the form \cref{eq13} or \cref{laterally} the reasoning is similar. We only need to notice that if $Q_2$ is an (anti)deltoid, there will be no involution for $y_3$ (or $x_3$),
   and thus  $Q_3$ must be an (anti)deltoid and hence $(Q_2,Q_3)$ have to be frontally coupled.
\end{proof}
\begin{remark}
    Note that even if spaces $Z_{12}$ and $Z_{34}$ do not coincide it is still possible for them to have a common component if they are reducible. Such a case leads to subclasses of a different type of Kokotsakis meshes, called {\it linear compound} (Section 3.5.2 and 3.5.4 in \cite{izmestiev-2017}). These subclasses have a more simple relation between $x_1, y_3$ (or $x_1, x_3$), i.e., the numerator in the relation~\cref{eq15} has a total degree of two rather than four. Therefore, we omit this case.
\end{remark}
\begin{remark}
In this paper, we mainly focus on the rich class of flexible \textcolor{black}{skew Kokotsakis meshes} composed of orthodiagonal elliptic quadrilaterals. For complexes with two (anti)deltoids we refer to Appendix B. There we present the conditions and an example of a mechanism with two involutive couplings which contain a deltoid and antideltoid respectively. Note that this is not possible in the case of planar
quads. We skip the case when all quadrilaterals are deltoids or antideltoids, as it is similar to the linear compounds mentioned above.
\end{remark}

\section{Elliptic orthodiagonal involutive type}\label{sec5}

\textcolor{black}{In this section, we present algebraic conditions for a flexible skew Kokotsakis mesh which belongs to the {\it elliptic orthodiagonal involutive} type. It is the most generic type for a mechanism constructed from orthodiagonal elliptic quadrilaterals with involutive coupling. Moreover, in Section~\ref{sec5.5}, we will prove the theorem that ensures the existence of a real interval of solutions.}

\textcolor{black}{Firstly, we justify our definition of orthodiagonal involutive type with elliptic quadrilaterals by proving the following corollary.}
\begin{corollary}\label{coro}
    If $(Q_1, Q_2)$, $(Q_3, Q_4)$ are elliptic OI couplings and \cref{eq14} is satisfied, then $(Q_1, Q_4)$ and $(Q_2, Q_3)$ are also involutive and hence the following table of conditions holds.
        \begin{equation}\label{elp-table}
        \begin{tabular}{ |c|c|c|c| } 
            \hline
              $F_i$ & $0$ & $\mathbb{R}\backslash 0$ & $\pm \infty$ \\ 
            \hline
             i=2,4  & $\lambda_{i-1} = \lambda_{i}$ & $\lambda_{i - 1} = \lambda_{i} = -1$ &$\lambda_{i-1} = \frac{1}{\lambda_{i}}$\\
            \hline
              $F_j$ & $0$ & $\mathbb{R}\backslash 0$ & $\pm \infty$ \\ 
            \hline
             j=1,3  & $\mu_{j-1} = \mu_j$ & $\mu_{j-1} = \mu_j = -1$ &$\mu_{j-1} = \frac{1}{\mu_j}$\\
            \hline
        \end{tabular}
    \end{equation}
\end{corollary}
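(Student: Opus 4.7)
My plan is to derive the corollary as a direct consequence of the two preceding results: \cref{lma4.1} supplies the compatibility of the two ``opposite'' couplings $(Q_1,Q_4)$ and $(Q_2,Q_3)$, while \cref{lma4.2} translates the involutivity of each of the four couplings into explicit algebraic relations on the involution factors and on the parameters $F_i$.

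First I would invoke \cref{lma4.1} to conclude that under the standing assumption \cref{eq14} the couplings $(Q_1,Q_4)$ and $(Q_2,Q_3)$ are compatible in the sense of \cref{def3.6}. Since we are in the fully elliptic setting, none of the four quadrilaterals is an (anti)deltoid, so the second alternative in \cref{def3.6} is forbidden. Hence both pairs must be involutive, upgrading the two hypothesized involutive couplings $(Q_1,Q_2)$ and $(Q_3,Q_4)$ to a complete cycle of four involutive couplings around the mesh.

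Next I would apply \cref{lma4.2} to each of these four involutive couplings separately. The essential book-keeping is to recognize, via \cref{def2.6} and \cref{lma2.7}, that the involution factor produced by the coupling is the $\lambda$-factor precisely when the shared vertex is of type $\alpha\delta$, and is the $\mu$-factor when the shared vertex is of type $\gamma\delta$. In the cyclic labelling fixed in \cref{sec2}, the vertices shared across the edges carrying $F_2$ and $F_4$ are of $\alpha\delta$-type, which yields the first row of \cref{elp-table} (the conditions indexed by $i=2,4$ on $\lambda_{i-1},\lambda_i$); the vertices shared across the edges carrying $F_1$ and $F_3$ are of $\gamma\delta$-type, which yields the second row (the conditions indexed by $j=1,3$ on $\mu_{j-1},\mu_j$). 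In each case substituting into the trichotomy \cref{eq10} reproduces the three columns of the table.

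The main hurdle is not the algebra, which is essentially a restatement of \cref{lma4.2}, but the careful tracking of which involution factor is attached to which shared vertex under the cyclic conventions of \cref{sec2}, so that the $\lambda$/$\mu$ dichotomy is assigned consistently to the two rows of \cref{elp-table}. Once this dictionary is in place, the corollary follows immediately by concatenating \cref{lma4.1,lma4.2} with the elliptic hypothesis.
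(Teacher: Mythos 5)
Your proposal is correct and follows essentially the same route as the paper: the authors state that the corollary "is an immediate implication of Lemma~\ref{lma4.1}," i.e.\ compatibility of $(Q_1,Q_4)$ and $(Q_2,Q_3)$ plus the elliptic hypothesis (which excludes the frontally-coupled (anti)deltoid alternative of \cref{def3.6}) forces involutivity, and the table is then the condition of \cref{lma4.2} applied around the cycle with the appropriate $\lambda$/$\mu$ factor at each shared vertex. Your more explicit book-keeping of which involution factor sits at which vertex is a helpful elaboration of what the paper leaves implicit, but it is not a different argument.
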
 

\textcolor{black}{It is an immediate implication of Lemma \ref{lma4.1}.} Note that equation \cref{eq14} only guarantees that two combined couplings $(Q_1, Q_2)$ and $(Q_3, Q_4)$ have a one-parameter solution of $x_i, y_i$ in $\mathbb{C}$. However, to obtain a flexible Kokotsakis mesh, we need a one-parameter set of real solutions. This problem will be resolved in Theorem~\ref{thm:main}.


Starting from equation \cref{eq15}, we consider the following substitutions (note that $t_n=\infty$ if $F_n^2=1$)
\begin{equation*}
w_i=x_i+\frac{\mu_{i}}{x_i}, i=1,3; \quad w_j=x_j+\frac{\lambda_{j-1}}{x_j}, j=2,4; \quad t_n = \frac{2 F_n}{1 - F_n^2}, 1\leq n \leq 4.
\end{equation*}

Since~$(Q_i, Q_{i + 1})$ are involutive, we can rewrite the factors in the left-hand side of \cref{eq15} as follows
\begin{equation}\label{eq28}
     k_2 \frac{4F_2 x_1^2 + \nu_1(1 - F_2^2)x_1 + 4F_2\mu_1}{(1 - F_2^2)x_1^2 - F_2\nu_1x_1 + \mu_1(1 - F_2^2)} = k_2 \frac{4 t_2 w_1 + 2 \nu_1}{2 w_1 - \nu_1 t_2}\quad \text{and} \quad y_3 + \frac{\mu_2}{y_3} = k_3 \frac{2 w_3 + 4 t_3}{2 - t_3 w_3}.
\end{equation}

\begin{remark}\label{rm4.2}
    A similar multiplier $k_4$ also appears in the equation for $(Q_3, Q_4)$. From now on we suppose for convenience that $F_i \neq \pm \infty$. However, the  following computations do not change drastically if
    $F_i$ takes values $\pm \infty$. 
\end{remark}

\textcolor{black}{Then \cref{eq15} can be rewritten as follows:}
\begin{equation}\label{eq31}
    \frac{4 t_2 w_1 + 2\nu_1}{2 w_1 - \nu_1 t_2}\cdot\frac{2 w_3 + 4 t_3}{2 - t_3 w_3} = \nu_2  \quad \text{and} \quad \frac{4 t_4 w_3 + 2\nu_3}{2 w_3 - \nu_3 t_4}\cdot\frac{2 w_1 + 4 t_1}{2 - t_1 w_1}= \nu_4,
\end{equation}
or
\begin{equation}\label{eq16}
    \begin{gathered}
        2(4t_2 + \nu_2t_3)w_1 w_3 + 4(4t_2 t_3 - \nu_2) w_1+\nu_1 (4 - \nu_2 t_2 t_3)w_3 + 2 \nu_1 (\nu_2 t_2 + 4 t_3) = 0,\\
        2(\nu_4 t_1 + 4 t_4) w_1 w_3 + \nu_3 (4 - \nu_4 t_1 t_4) w_1 + 4 (4 t_1 t_4 - \nu_4) w_3 + 2 \nu_3 (4 t_1 + \nu_4 t_4) = 0.
    \end{gathered}
\end{equation}
\begin{remark}\label{rm4.3}
    If some $F_i^2 = 1$ (i.e. $t_i=\infty$), equations \cref{eq31} and \cref{eq16} can still be used. For example, if $F_2 = 1$, we have
        \begin{equation*}
            -\frac{4 w_1}{\nu_1}\cdot\frac{2 w_3 + 4 t_3}{2 - t_3 w_3} = \nu_2.
        \end{equation*}
\end{remark}
According to $Z_{12} / j_{12} =  Z_{34}/ j_{34}$, the equations in \cref{eq16} define the same space $W$, so the coefficients of corresponding terms are proportional. This is equivalent to the condition that $\rk(N) = 1$, where $N$ is the following matrix
\begin{equation}\label{eq17}
N =
    \begin{pmatrix}
        2(4 t_2 + \nu_2 t_3) & 4(4 t_2 t_3 - \nu_2) & \nu_1(4 - \nu_2 t_2 t_3) &     2\nu_1(\nu_2 t_2 + 4 t_3)\\
        2(\nu_4 t_1 + 4 t_4) & \nu_3(4 - \nu_4 t_1 t_4) & 4(4t_1 t_4 - \nu_4) & 2 \nu_3(4 t_1 + \nu_4 t_4)
    \end{pmatrix}.
\end{equation}
Therefore, it is equivalent to a zero-determinant condition for all 2-minors of the matrix $N$, where $p_{ij}$ stands for the minor of  columns $i, j$.
\begin{equation}\label{eq18}
    \begin{gathered}
        p_{12} = 16\nu_4t_1t_2t_3 + 4\nu_3\nu_4t_1t_2t_4 + \nu_2\nu_3\nu_4t_1t_3t_4 + 64t_2t_3t_4 - 4\nu_2\nu_4t_1 - 16\nu_3t_2 - 4\nu_2\nu_3t_3 - 16\nu_2t_4,\\
        p_{13} = \nu_1\nu_2\nu_4t_1t_2t_3 + 64t_1t_2t_4 + 16\nu_2t_1t_3t_4 + 4\nu_1\nu_2t_2t_3t_4 - 4\nu_1\nu_4t_1 - 16\nu_4t_2 - 4\nu_2\nu_4t_3 - 16\nu_1t_4,\\
        p_{14} = (16 \nu_3 - \nu_1  \nu_2 \nu_4) t_1 t_2 + 4 (\nu_3 \nu_2 - \nu_1 \nu_4) t_1 t_3 + 4(\nu_3 \nu_4 - \nu_1 \nu_2) t_2 t_4 + ( \nu_3 \nu_4 \nu_2 - 16 \nu_1) t_3 t_4,\\
        p_{23} = (256 - \nu_1 \nu_2 \nu_3 \nu_4) t_1 t_2 t_3 t_4 + 4(\nu_1 \nu_3  \nu_4 - 16 \nu_2) t_1 t_4 +4(\nu_1  \nu_2 \nu_3 - 16 \nu_4) t_2 t_3 + 16(\nu_4 \nu_2 - \nu_1 \nu_3),\\
        p_{24} = \nu_3(64 t_1 t_2 t_3 + \nu_1\nu_2\nu_4t_1t_2t_4 + 4\nu_1\nu_4t_1t_3t_4 + 16\nu_4t_2t_3t_4 - 16\nu_2t_1 - 4\nu_1\nu_2t_2 - 16\nu_1t_3 - 4\nu_2\nu_4t_4),\\
        p_{34} = \nu_1(4\nu_2\nu_3t_1t_2t_3 + 16\nu_2t_1t_2t_4 + 64t_1t_3t_4 + \nu_2\nu_3\nu_4t_2t_3t_4 - 16\nu_3t_1 - 4\nu_2\nu_4t_2 - 16\nu_4t_3 - 4\nu_3\nu_4t_4).\\
    \end{gathered}
\end{equation}

\noindent \emph{Pseudo-planar case.}
\textcolor{black}{In the pseudo-planar case (see \cref{def-pseudoplanar}) the polynomial system \cref{eq18}} degenerates to $\nu_2 \nu_4 - \nu_1 \nu_3 = 0$ and equality of involution factors from table \cref{elp-table} implies that the parameters $(\lambda_1, \lambda_3, \mu_1, \mu_2)$ and $(\nu_1, \nu_2, \nu_3)$ are independent and they determine a finite family of flexible \textcolor{black}{skew Kokotsakis meshes} uniquely. In geometric terms, these parameters can be described as seven planar angles $(\delta_1,\delta_2, \delta_3, \alpha_1, \alpha_3, \gamma_1, \gamma_2)$ and one dihedral angle~$\tau_1$, which gives us 8 degrees of freedom.

\subsection{Existence}\label{sec5.5}
    
We define the subclass of the elliptic OI type as follows.   
\begin{enumerate}
    \item All planar angles $(\alpha_i , \beta_i, \gamma_i, \delta_i)$ satisfy the conditions of orthodiagonality and ellipticity:
    $$
    \begin{matrix}
    \cos(\alpha_i) \cos(\gamma_i) = \cos(\beta_i) \cos(\delta_i), & \alpha_i \pm \beta_i \pm \gamma_i \pm \delta_i \neq 0 \text{ mod } 2\pi.
    \end{matrix}
    $$
    \item Elliptic quadrilaterals $Q_i$ satisfy involutive coupling conditions from table \cref{elp-table}.
    \item The set of parameters $(\nu_1, \nu_2, \nu_3, \nu_4, t_1, t_2, t_3, t_4)$ satisfies the polynomial system \cref{eq18}.
    \end{enumerate}
\textcolor{black}{
The flexibility of the \textcolor{black}{skew Kokotsakis mesh} means the existence of a one-parametric family of real solutions (not a discrete set of points) for the system \cref{eq3}. Firstly we give necessary and sufficient conditions when a single equation of the system has a one-parameter set of real solutions.
 \begin{proposition}
     \label{thm:localexistence} (Local existence) 
     Equation \eqref{eq5} which defines the configuration space of an orthodiagonal quadrilateral $Q_i$ of elliptic type has solutions in real if and only if one of the following conditions is satisfied:
     \begin{itemize}
     \item at least one value of $\lambda_i$ or $\mu_i$ is negative;
     \item $\lambda_i > 0, \,\mu_i > 0$ and $\mfrac{\nu_i^2}{\lambda_i \mu_i} > 16$.
     \end{itemize}
 \end{proposition}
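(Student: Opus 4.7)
The plan is to reduce the two-variable equation \eqref{eq5} to a range analysis on the hyperbola $uv=\nu_i$ via the substitutions $u=x_2+\lambda_i x_2^{-1}$ and $v=x_1+\mu_i x_1^{-1}$. A one-parameter family of real $(x_1,x_2)$ satisfying \eqref{eq5} corresponds, up to the fibres of these substitutions, to a one-parameter set of real $(u,v)$ lying both on $uv=\nu_i$ and in the image rectangle $U(\lambda_i)\times V(\mu_i)$, where $U(c)$ (resp.\ $V(c)$) denotes the image of $h_c(x)=x+cx^{-1}$ on $\mathbb{R}\setminus\{0\}$.

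First I would describe $U(c)$ explicitly by differentiating $h_c$. For $c>0$ the critical points are $x=\pm\sqrt{c}$ with critical values $\pm 2\sqrt{c}$, giving $U(c)=(-\infty,-2\sqrt{c}]\cup[2\sqrt{c},\infty)$. For $c\leq 0$ the derivative $1-cx^{-2}$ is strictly positive on each branch, so $h_c$ is strictly monotonic and sweeps all of $\mathbb{R}$ (with the origin removed when $c=0$). On the interior of each image component $h_c$ is a local diffeomorphism, so passing between the $(x_1,x_2)$-picture and the $(u,v)$-picture preserves the topological dimension of the real solution set.

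The argument then splits into cases on the signs of $\lambda_i$ and $\mu_i$. If at least one factor is negative, say $\lambda_i<0$, then $U(\lambda_i)=\mathbb{R}$; since $\mathbb{R}\setminus V(\mu_i)$ is at worst a bounded open interval containing the origin, for every $u_0$ with $|u_0|$ sufficiently small one has $\nu_i/u_0\in V(\mu_i)$, so one obtains an open arc of admissible $u$ and hence a real one-parameter family. If both $\lambda_i>0$ and $\mu_i>0$, the constraints $|u|\geq 2\sqrt{\lambda_i}$ and $|v|=|\nu_i/u|\geq 2\sqrt{\mu_i}$ can be simultaneously satisfied only when $|\nu_i|\geq 4\sqrt{\lambda_i\mu_i}$, i.e.\ $\nu_i^2/(\lambda_i\mu_i)\geq 16$. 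Strict inequality leaves a nondegenerate interval $2\sqrt{\lambda_i}\leq|u|\leq|\nu_i|/(2\sqrt{\mu_i})$ of admissible values and hence a real arc of solutions, whereas equality collapses the admissible set to a single corner of the rectangle.

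The only delicate point, and the one worth spelling out carefully in the written proof, is the boundary case $\nu_i^2=16\lambda_i\mu_i$ with $\lambda_i,\mu_i>0$: here the hyperbola $uv=\nu_i$ is tangent to the corner of the admissible rectangle, the real zero set degenerates to a pair of isolated points, and no flexion is possible. This explains the strict inequality in the statement. Everything else reduces to the elementary calculus of $h_c$ together with the standard geometry of the hyperbola, and the pullback step is immediate from the local-diffeomorphism property established in the first step.
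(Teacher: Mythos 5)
Your argument is correct and follows essentially the same route as the paper: the paper rewrites \eqref{eq5} as $\bigl(\tfrac{x_2}{\sqrt{\lambda_1}}+\tfrac{\sqrt{\lambda_1}}{x_2}\bigr)\bigl(\tfrac{x_1}{\sqrt{\mu_1}}+\tfrac{\sqrt{\mu_1}}{x_1}\bigr)=\tfrac{\nu_1}{\sqrt{\lambda_1\mu_1}}$ and uses that each factor has absolute value at least $2$, which is precisely your range analysis of $h_c(x)=x+cx^{-1}$, and it disposes of the case of a negative factor via positivity of the discriminant of the associated quadratic, which is your observation that $h_c$ is then surjective onto $\mathbb{R}$. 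Your explicit handling of the boundary case $\nu_i^2=16\lambda_i\mu_i$ (where the real solution set degenerates to isolated points, justifying the strict inequality) is a useful clarification of a point the paper's one-line ``if and only if'' passes over.
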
}
 \textcolor{black}{\begin{proof}
   Suppose that $i = 1$ and $\lambda_1, \mu_1$ are positive, then the equation \cref{eq5} for a spherical quad $Q_1$ can be rewritten in the following way:
     \begin{equation*}
        \Big(\frac{x_2}{\sqrt{\lambda_1}} + \frac{\sqrt{\lambda_1}} {x_2}\Big) \Big(\frac{x_1}{\sqrt{\mu_1}} + \frac{\sqrt{\mu_1}} {x_1}\Big) = \frac{\nu_1}{\sqrt{\lambda_1 \mu_1}}.
    \end{equation*}
    Since absolute values of both parentheses are greater or equal to 2, such an equation has real solutions if and only if $\mfrac{\nu_1^2}{\lambda_1 \mu_1} > 16$.
    For negative values of $\lambda_i$ or $\mu_i$, the proposition is trivial since the discriminant of a corresponding quadratic equation is always positive.
 \end{proof}}
\textcolor{black}{
Even when every single equation has an interval of a real solution, the intervals still may not intersect. The following main result provides a constructive answer to this problem, expressed in terms of inequalities for involutive factors.}

 \begin{theorem} \label{thm:main} \textcolor{black}{(Global existence) A skew Kokotsakis mesh that belongs to the elliptic OI type and satisfies conditions for local existence is flexible if and only if }
        \begin{itemize}

            \item $\lambda_i < 0$ and $\mu_i > 0$ for all $i$ and
\begin{equation}\label{global-existence-condition}
                t_2 = 0 \quad \text{or} \quad 16\sqrt{\mu_1\mu_2}-|\nu_1\nu_2| < \frac{4|\nu_2|\sqrt{\mu_1}+4|\nu_1|\sqrt{\mu_2}}{|t_2|},
            \end{equation}

            \item $\lambda_i > 0$ and $\mu_i < 0$ for all $i$ and

   \begin{equation}\label{global-existence-condition1}
            t_3 = 0 \quad \text{or} \quad 16\sqrt{\lambda_2\lambda_3}-|\nu_2\nu_3| < \frac{4|\nu_3|\sqrt{\lambda_2}+4|\nu_2|\sqrt{\lambda_3}}{|t_3|},
        \end{equation}

        \item all other combinations of signs for $\lambda_i, \mu_i$.
        \end{itemize}
            
        

    \end{theorem}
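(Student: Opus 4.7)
The plan is to translate the flexibility criterion into a real-geometry problem on the curve $C$ cut out by~\eqref{eq31} in the $(w_1,w_3)$-plane. For each $i\in\{1,3\}$ one describes the \emph{admissible set} $D_i\subset\mathbb{R}$ of $w_i$-values that are simultaneously attained by real $x_i$ (via $x_i+\mu_i/x_i=w_i$) and that yield a real $x_{i\pm 1}$ from the companion quad relation $x+\lambda_i/x=\nu_i/w_i$. Depending on the signs of $\lambda_i,\mu_i$, $D_i$ is all of $\mathbb{R}$, the half-line $\{|w_i|\geq 2\sqrt{\mu_i}\}$, the interval $\{|w_i|\leq|\nu_i|/(2\sqrt{\lambda_i})\}$, or the doubly bounded set $\{2\sqrt{\mu_i}\leq|w_i|\leq|\nu_i|/(2\sqrt{\lambda_i})\}$ (non-empty exactly by Proposition~\ref{thm:localexistence}). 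Global flexibility is then equivalent to the existence of a one-parameter real arc in $C\cap(D_1\times D_3)$ on which the implicit reality constraints coming from $Q_2,Q_4$ (namely $|\eta_2(w_3)|\geq 2\sqrt{\mu_2}$ when $\mu_2>0$, and the analogue for $\mu_4$) are also respected; here $\eta_1(w_1)$ and $\eta_2(w_3)$ denote the two M\"obius factors in~\eqref{eq31}.

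For the third bullet (``other sign combinations'') I would split into two sub-arguments. If the sign pattern forces some $D_i=\mathbb{R}$, then the $w_1\mapsto w_3$ relation extracted from~\eqref{eq31} is a real M\"obius map of $\mathbb{R}$ to $\mathbb{R}$, so any parametrisation of $\mathbb{R}$ minus its pole is a real arc inside $D_1\times D_3$ and flexibility is immediate. For the remaining sign patterns where both $D_i$ are bounded or semi-bounded but do not match Cases~1 or~2, I appeal to the determinantal identities~\eqref{eq18} together with local existence on all four quads. The pseudo-planar limit $t_i=0$ already shows the mechanism: \eqref{eq18} collapses to $\nu_1\nu_3=\nu_2\nu_4$, the curve becomes the line $w_3=(\nu_2/\nu_1)w_1$, and the overlap condition $(\nu_2/\nu_1)D_1\cap D_3\neq\varnothing$ rearranges, using the involutive identities $\lambda_1=\lambda_2,\mu_2=\mu_3,\mu_1=\mu_4,\lambda_3=\lambda_4$ of~\eqref{elp-table}, to Proposition~\ref{thm:localexistence} applied to $Q_2$ and $Q_4$. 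The general case is the same bookkeeping with the full family of $p_{ij}=0$ relations in place of the single $\nu_1\nu_3=\nu_2\nu_4$.

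The core of the theorem is Case~1 ($\lambda_i<0$, $\mu_i>0$), where $D_i=\{|w_i|\geq 2\sqrt{\mu_i}\}$ and reality of $x_2,x_4$ is automatic from $\lambda_1,\lambda_3<0$. The only remaining constraint is that $\mu_2>0$ forces $|\eta_2(w_3)|\geq 2\sqrt{\mu_2}$, which through $\eta_1(w_1)\eta_2(w_3)=\nu_2$ translates to $|\eta_1(w_1)|\leq B$ with $B:=|\nu_2|/(2\sqrt{\mu_2})$ and $\eta_1(w_1)=(4t_2 w_1+2\nu_1)/(2w_1-\nu_1 t_2)$ (note $k_2=1$ since $F_2$ is finite). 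Unfolding $|\eta_1|^2\leq B^2$ gives $Q(w_1)\leq 0$ for the quadratic $Q(w_1)=4(4t_2^2-B^2)w_1^2+4\nu_1 t_2(4+B^2)w_1+\nu_1^2(4-B^2 t_2^2)$, whose discriminant collapses to $64\nu_1^2 B^2(1+t_2^2)^2>0$. When $|2t_2|<B$ (which includes $t_2=0$) the leading coefficient is negative and the admissible $w_1$-set $\mathbb{R}\setminus(r_-,r_+)$ reaches $\pm\infty$, so it automatically overlaps $D_1$; this covers the ``$t_2=0$'' alternative of~\eqref{global-existence-condition}. Otherwise $[r_-,r_+]$ is bounded and overlap with $D_1$ reduces to $r_-\leq-2\sqrt{\mu_1}$ or $r_+\geq 2\sqrt{\mu_1}$. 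After the symmetries $(\nu_1,w_1)\mapsto(-\nu_1,-w_1)$ and $(t_2,w_1)\mapsto(-t_2,-w_1)$ the roots factor as $r_+=\nu_1(t_2 B-2)/[2(2t_2+B)]$ and $r_-=-\nu_1(2+t_2 B)/[2(2t_2-B)]$, and substituting $B$ back into $r_-\leq-2\sqrt{\mu_1}$ and clearing denominators yields~\eqref{global-existence-condition} exactly.

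Case~2 is the dual analysis at the coupling $(Q_2,Q_3)$ with parameter $t_3$: the swap $\lambda\leftrightarrow\mu$ in the above produces~\eqref{global-existence-condition1} by the same argument. The main technical hurdle I anticipate is the sign and absolute-value bookkeeping in the M\"obius root analysis of Case~1, in particular verifying that the single rearranged inequality ``$r_-\leq-2\sqrt{\mu_1}$'' under the symmetries captures both overlap alternatives and reduces cleanly to the symmetric absolute-value form stated in the theorem; the Case~3 subcases with both $D_i$ bounded require carrying the pseudo-planar bookkeeping through the full $p_{ij}=0$ system, which I expect to be the most laborious part of the argument.
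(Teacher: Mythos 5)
Your treatment of the first two bullets is essentially sound and, after unwinding, equivalent to the paper's argument: the paper works with the M\"obius map $f(w_1)=\frac{2w_1-\nu_1t_2}{2t_2w_1+\nu_1}$ on $\mathbb{R}\mathrm{P}^1$ and characterizes when the arc $f(I)$ misses the arc $J$ by evaluating $f$ at the endpoints $\pm2\sqrt{\mu_1}$ and checking $\infty\notin f(I)$, whereas you expand $|\eta_1|^2\le B^2$ (with $\eta_1=2/f$) into the quadratic $Q(w_1)\le 0$ and locate its roots. Your discriminant $64\nu_1^2B^2(1+t_2^2)^2$ and the factorization of $r_\pm$ are correct, the unbounded subcase $|2t_2|<B$ does cover the ``$t_2=0$'' alternative (and one can check \eqref{global-existence-condition} holds automatically there, as the iff requires), and the reduction of the overlap test to the single inequality $r_-\le-2\sqrt{\mu_1}$ after normalizing signs does work because $|r_-|\ge|r_+|$ in the bounded subcase. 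Case~2 by the cyclic relabeling $\tilde Q_i=Q_{i+1}$ is exactly what the paper does.

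The genuine gap is in the third bullet. You assert that the remaining sign patterns reduce to ``the same bookkeeping'' as the pseudo-planar limit, but you never establish the structural fact that makes this true: by table \eqref{elp-table}, whenever both involution factors in a coupled pair $\{\lambda_{i-1},\lambda_i\}$ or $\{\mu_{j-1},\mu_j\}$ are \emph{positive}, the coupling parameter is forced into $F\in\{0,\pm\infty\}$, hence $t=0$ and the M\"obius factor in \eqref{eq28} degenerates to multiplication by a constant $k$. This is precisely why a global inequality arises \emph{only} when two non-adjacent pairs are positive (your Cases~1--2, where the constrained variables $w_1$ and $w_3'$ are linked through a genuinely nontrivial $t_2$), and why every other sign combination needs nothing beyond local existence: two adjacent positive pairs put both constraints inside a single quad equation $w_1w_2=\nu_1$, where local existence $\nu_1^2/(\lambda_1\mu_1)>16$ is exactly the overlap condition; and three or more positive pairs force three of the $t_i$ to vanish by the table and the fourth to vanish by \eqref{eq18}, after which the interval intersections close using the identities $|k_ik_j|\sqrt{\lambda_{i-1}\mu_{j}}=\sqrt{\lambda_i\mu_{j-1}}$ from \eqref{eq10}. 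Without this observation, your claim that no extra condition appears is unsupported --- a priori a nonzero $t_i$ coupling two constrained variables would generate an inequality of exactly the type \eqref{global-existence-condition}, and ``the full family of $p_{ij}=0$ relations'' does not by itself rule this out. Relatedly, your admissible sets $D_1,D_3$ omit the constraints coming from $\mu_4,\lambda_4$ on $w_1'$ and $w_4$ (i.e.\ from $Q_4$); these are harmless only after one invokes the same degeneration $t_1=t_3=0$ and the $k$-identities, which again needs to be said.
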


\textcolor{black}{
The proof is based on the explicit description of intervals of solutions for every single equation and finding conditions when these intervals intersect. Technical details can be found in Appendix~\ref{apx:a}.
}

\subsection{Geometric properties}\label{sec6.2}    
    
Generalized Kokotsakis meshes of elliptic orthodiagonal involutive type have some remarkable properties which are analogous to the ones in the planar case.
\begin{enumerate}
    \item The conditions $\cos(\alpha_i) \cos(\gamma_i) = \cos(\beta_i) \cos(\delta_i)$ means that the plane $[B_i, A_i, A_{i + 1}]$ is orthogonal to the plane $[C_i, A_i, A_{i - 1}]$ for $i = 1 \ldots 4$ (see \cref{fig1}), and this property is preserved during the flexion of the \textcolor{black}{mesh}.
    \item The conditions on involution factors in each subclass means that an angle between planes $[B_i, A_i, A_{i + 1}]$ and $[A_i, A_{i + 1}, C_{i + 1}]$ is equal to $\zeta_{i + 1}$ for $i = 1 \ldots 4$, and this property is preserved during a flexion of the \textcolor{black}{mesh}.
\end{enumerate}

The involutive coupling of quadrilaterals implies compatibility of the involutions $j_1$ and $j_2$ (or the condition \cref{eq10} on the involution factors) for the coupling $(Q_1, Q_2)$. In geometrical terms it can be interpreted as conditions on the angles $\xi_1, \xi_2, \xi_1', \xi_2'$, which are expressed in the following result.

\begin{figure}[ht!]
    \centering
    \begin{tikzpicture}
        \node[anchor=south west,inner sep=0] at (-3.5, -4.7) {\includegraphics[width=0.5\textwidth]{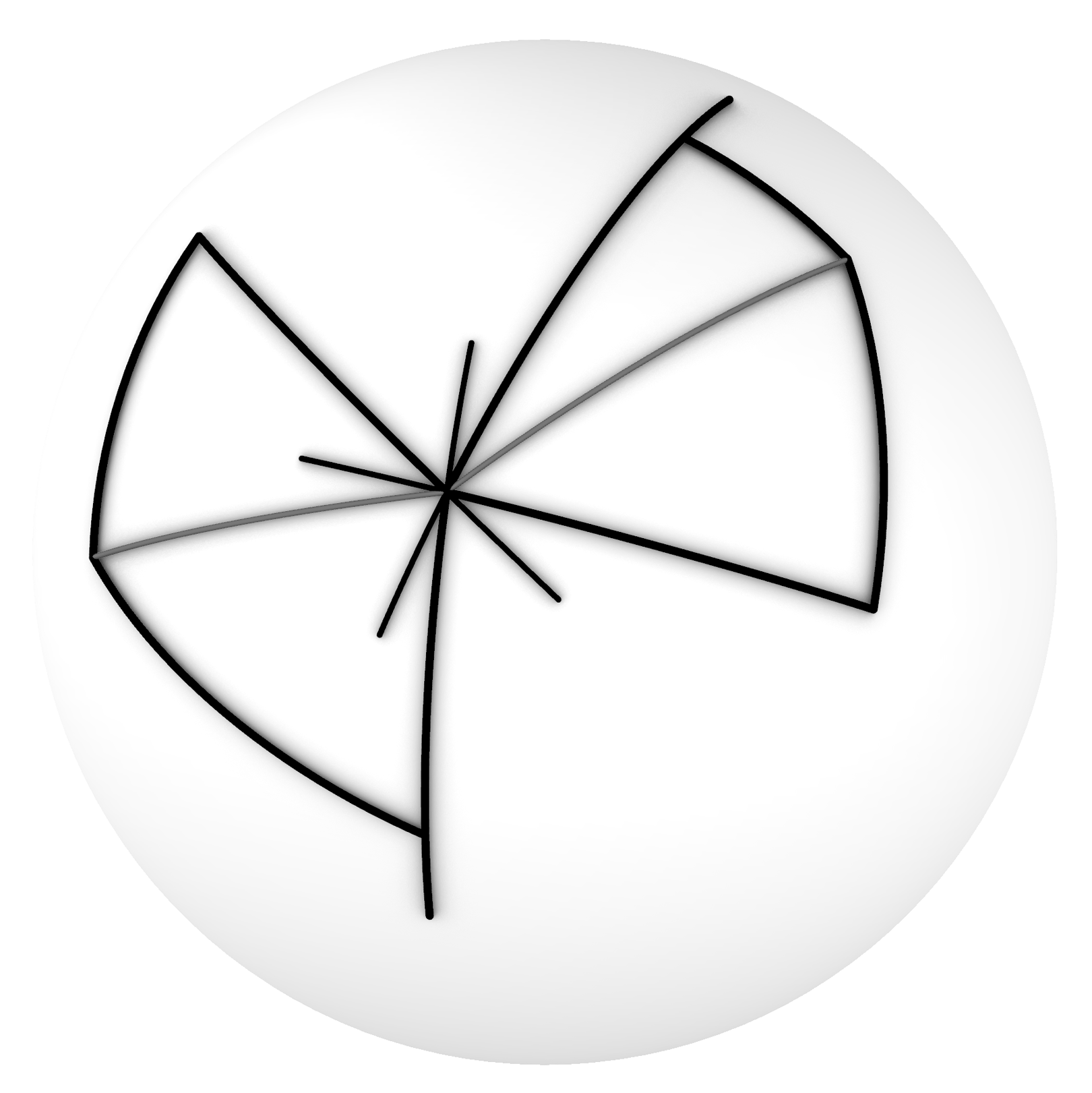}};

        \draw[ultra thick] (240:0.5) arc (240:345:0.5);
        \draw (290:0.8) node {$\psi'_2$};
        
        \draw[ultra thick] (315:0.8) arc (315:345:0.8);
        \draw (330:1.1) node {$\zeta_2$};
        
        \draw[ultra thick] (-15:0.9) arc (-15:35:0.9);
        \draw[ultra thick] (-15:1) arc (-15:35:1);
        \draw (10:1.3) node {$\xi'_1$};

        \draw[ultra thick] (35:1.2) arc (35:60:1.2);
        \draw (47:1.55) node {$\xi'_2$};

        \draw[ultra thick] (60:1) arc (60:82:1);
        \draw (71:1.25) node {$\tau_2$};
        
        \draw[ultra thick] (82:0.8) arc (82:135:0.8);
        \draw (106:1.05) node {$\phi_2$};

        \draw[ultra thick] (135:1.3) arc (135:188:1.3);
        \draw[ultra thick] (135:1.4) arc (135:188:1.4);
        \draw (160:1.6) node {$\xi_1$};
        
        \draw[ultra thick] (188:1) arc (188:263:1);
        \draw (225:1.2) node {$\xi_2$};
        
        \draw[ultra thick] (258:2.6) arc (170:260:0.5);
        \draw (260:3.2) node {$\phi_1$};
        
        \draw[ultra thick] (49:3.34) arc (0:25:1);
        \draw (48:3.7) node {$\psi_3$};

        \draw (-0.9, 1.3) node {$\alpha_1$};
        \draw (-2.8, 0.8) node {$\beta_1$};
        \draw (-1.7, -2.1) node {$\gamma_1$};
        \draw (0.1, -1.7) node {$\delta_1$};

        \draw (1.9, -0.8) node {$\alpha_2$};
        \draw (3.7, 0.5) node {$\beta_2$};
        \draw (2.9, 2.4) node {$\gamma_2$};
        \draw (0.6, 1.8) node {$\delta_2$};

    \end{tikzpicture}
    \caption{Illustration of Lemma~\ref{lem-final}.}
    \label{fig7}
\end{figure}

\begin{lemma} \label{lem-final}
Let the spherical elliptic orthodiagonal quadrilaterals $Q_1$ and $Q_2$ form an involutive coupling as in \cref{fig7}. Then
\textcolor{black}{$$
\begin{matrix}
\xi_1 = \xi_1', &
\xi_2 = \xi_2' + \tau_2 + \zeta_2
\end{matrix}
$$}
during flexions.
\end{lemma}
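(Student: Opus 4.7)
The approach is to give a geometric interpretation of the algebraic condition from \cref{lma4.2}. Since $Q_1$ and $Q_2$ are orthodiagonal elliptic quadrilaterals, each one admits the involution $j_i$ realized as a reflection across a specific diagonal in the spherical image at the common vertex of the coupling. I would identify $\xi_1, \xi_2$ as the two spherical angles that the $j_1$-diagonal of $Q_1$ makes with the two sides of $Q_1$ incident to the common vertex (and analogously $\xi_1', \xi_2'$ for $Q_2$). With these definitions, the involutive-coupling condition $j_1 = j_2$, as expressed in \cref{eq11}, becomes the geometric statement that the two fold diagonals descend to a common line on the shared edge $A_1 A_2$.

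From this identification, both equalities of the lemma follow by reading off angles on the sphere on either side of the shared edge. On the ``$\xi_1$-side'', the orthogonality property~(1) of \cref{sec6.2} (equivalent to $\cos\alpha_i\cos\gamma_i=\cos\beta_i\cos\delta_i$) forces the relevant planes to be perpendicular, and the two diagonals lie in a common plane there; this yields $\xi_1 = \xi_1'$ directly. On the ``$\xi_2$-side'', the plane containing the $Q_1$-diagonal and the plane containing the $Q_2$-diagonal are related by the rigid rotation about $A_1 A_2$ that carries one face of the central tetrahedron to its neighbor, and this rotation has magnitude exactly $\tau_2+\zeta_2$ --- the same offset already seen in the standing identity $\psi_i = \phi_i + \tau_i + \zeta_i$. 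Projecting to the spherical image gives the second relation $\xi_2 = \xi_2' + \tau_2 + \zeta_2$.

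Concretely, I would execute the argument in two steps. First, I would verify directly from \cref{def2.6} that the angular position of the diagonal on the spherical image of $Q_1$, determined by $\lambda_1$, matches the angular position of the diagonal on the spherical image of $Q_2$, determined by $\lambda_2$, modulo the M\"obius change of variables $y_2(x_2)$ from \cref{eq8}; this is precisely the content of \cref{lma4.2} and it covers all three cases $F_2 = 0$, $F_2 \in \mathbb{R}\setminus\{0\}$, $F_2 = \pm\infty$ in a unified way. Second, having pinned down the diagonals, I would extract $\xi_1, \xi_2$ (respectively $\xi_1', \xi_2'$) via spherical trigonometry and compare the two sides of the shared edge using the rigid rotation described above.

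The main technical obstacle is the sign and orientation bookkeeping: the involution factor $\lambda_i$ is attached to a specific vertex and the angles $\xi_1, \xi_2$ must be measured from the correct sides in the orientation compatible with the signed dihedral angles $\tau_2, \zeta_2$. Once the conventions of \cref{def2.6}, \cref{fig2}, and the definitions of $\tau_2, \zeta_2$ are aligned, the two relations follow at once from the matching of diagonals, with no genuine case split between the pseudo-planar situation ($F_2=0$, where the second identity is already visible in the planar case) and the generic skew situation.
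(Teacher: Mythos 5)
Your proposal has a genuine gap at its central step. You assert that the involutivity condition $j_1(x_2)=j_2(x_2)$ translates into ``the two fold diagonals descend to a common line on the shared edge $A_1A_2$,'' and you then read off both identities from that picture. This is not the correct geometric content of involutivity, and it cannot be: a single incidence condition on the diagonals is one equation, whereas the lemma asserts two independent relations among the four angles $\xi_1,\xi_2,\xi_1',\xi_2'$. In fact, tracking the angular positions of the two diagonals in the pencil of planes through the line $A_1A_2$, the conclusion $\xi_1=\xi_1'$ forces the diagonal planes of $Q_1$ and $Q_2$ to differ by the fixed angle $\zeta_2$ (mod $\pi$), so they coincide only when the side quad is planar; your subsequent claim that ``the two diagonals lie in a common plane there; this yields $\xi_1=\xi_1'$ directly'' therefore does not go through, and the appeal to the orthodiagonality condition $\cos\alpha_i\cos\gamma_i=\cos\beta_i\cos\delta_i$ is a red herring here (it concerns the two diagonals of a \emph{single} quad and only guarantees that the involutions exist). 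Your proposal also never explains the asymmetry between the two conclusions --- why $\xi_1=\xi_1'$ carries no offset while $\xi_2=\xi_2'+\tau_2+\zeta_2$ does --- which is precisely the point that needs an argument.

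The paper's proof supplies exactly the missing second equation. From \cref{fig7} one has $\phi_2=\pi-(\xi_1+\xi_2)$ and $\psi_2=\pi-(\xi_1'+\xi_2')$, so the standing relation $\psi_2=\phi_2+\tau_2+\zeta_2$ gives
$(\xi_1+\xi_2)-(\xi_1'+\xi_2')=\tau_2+\zeta_2$.
Involutivity is then used not as a statement about coincident diagonals but as the statement that the \emph{folded} configuration is again a compatible coupling: $j_1$ and $j_2$ replace $\phi_2,\psi_2$ by $\phi_2'=\pi-(\xi_2-\xi_1)$ and $\psi_2'=\pi-(\xi_2'-\xi_1')$, and $j_1(x_2)=j_2(x_2)$ is equivalent to $\psi_2'=\phi_2'+\tau_2+\zeta_2$, i.e.\ $(\xi_2-\xi_1)-(\xi_2'-\xi_1')=\tau_2+\zeta_2$. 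Subtracting and adding these two linear relations yields $\xi_1=\xi_1'$ and $\xi_2=\xi_2'+\tau_2+\zeta_2$ at once. To repair your argument you would need to replace the coincident-diagonal claim by this ``folded state is still coupled'' formulation (or by a correct computation of the two diagonals' positions from \emph{both} incident sides), which is where the second equation actually comes from.
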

\begin{proof}
From the equality $\psi_2 = \phi_2 + \tau_2 + \zeta_2$ for the linkage $(Q_1, Q_2)$ we obtain $\pi - (\xi_1' + \xi_2') = \pi - (\xi_1 + \xi_2) + \tau_2 + \zeta_2$. The condition \cref{eq10} means compatibility of the involutions $j_1$ and $j_2$, i.e $j_1(x_2) = j_2(x_2)$. Geometrically, it means that after folding along each diagonal in $Q_1$ and $Q_2$ in \cref{fig7} we obtain the angles $\phi'_2, \psi'_2$ from $\phi_2, \psi_2$, respectively, such that $\psi'_2 = \phi'_2 + \tau_2 + \zeta_2$. The last condition is equivalent to
\textcolor{black}{\begin{equation*}
\pi - (\xi_2' - \xi_1') = \pi - (\xi_2 - \xi_1) + \tau_2 + \zeta_2.
\end{equation*}}
\end{proof}

\section{Practical construction \textcolor{black}{ and fabrication}} \label{sec:practice}
In comparison to the planar orthodiagonal involutive case, in the skew one, the system of involutive factors should also satisfy conditions \cref{eq18}. In this section, we show how to obtain all solutions to this system. Our approach is to use a powerful tool from computer algebra: a cylindrical algebraic decomposition.

\subsection{Polynomial system}\label{sec7.1}

In this section we will show how to construct all solutions to the polynomial system \cref{eq18}. Let us notice that the elements of the first column are both zero or non-zero
\begin{equation}
    \begin{pmatrix}
        2(4 t_2 + \nu_2 t_3) \\
        2(\nu_4 t_1 + 4 t_4)
    \end{pmatrix},
\end{equation}
because rows are proportional in matrix \cref{eq17}.

    Firstly, we consider the generic case when
    \begin{equation}\label{eq26}
        t_2 \neq -\frac{\nu_2}{4} t_3, \quad t_4 \neq -\frac{\nu_4}{4} t_1,
    \end{equation} 
    and under such restrictions the system \cref{eq18} is equivalent to the system of $p_{12}, p_{13}, p_{14}$. Indeed, because each $p_{ij}$ is equal to zero if and only if the $i$-th and $j$-th columns are linearly dependent, linear dependence of the first column with second and third columns leads to the linear dependence between second and third column. Therefore, we describe the set of solutions for the first three polynomials. Since $p_{12}, p_{13}$ are linear in $t_4$, they have a common
    root if 
    \vspace{-0.25cm}
        \begin{multline}\label{eq19}
            \nu_4(\nu_1 \nu_2 \nu_3 \nu_4 - 256) t_1^2 t_2 t_3 + 4 \nu_4(16 \nu_2 - \nu_1 \nu_3 \nu_4) t_1^2 + 16 \nu_3 (16 - \nu_4^2) t_1 t_2 + \\ +4 \nu_2 \nu_3 (16 - \nu_4^2) t_1 t_3 + 16 (\nu_1 \nu_2 \nu_3 - 16 \nu_4) t_2 t_3 + 64 (\nu_2 \nu_4 - \nu_1 \nu_3) = 0.
        \end{multline}
        Since the equation is linear in $t_2$, we can explicitly express $t_2$ with the other variables
        \begin{equation}\label{eq23}
            t_2 = 4\frac{\nu_4(\nu_1 \nu_3 \nu_4 - 16 \nu_2)t_1^2 + \nu_2 \nu_3(\nu_4^2 - 16)t_1 t_3 + 16(\nu_1 \nu_3 - \nu_2 \nu_4)}{\nu_4(\nu_1 \nu_2 \nu_3 \nu_4 - 256)t_1^2 t_3 + 16 \nu_3 (16 - \nu_4^2) t_1 + 16(\nu_1 \nu_2 \nu_3 - 16 \nu_4)t_3},
        \end{equation}
        and from $p_{12}$ we can determine $t_4$,
            \begin{equation}\label{denom30}
                t_4 = 4\frac{(-16 \nu_1 \nu_4+\nu_2^2 \nu_4 \nu_1) t_1 t_3+(-16 \nu_2 \nu_4+\nu_1 \nu_2^2 \nu_3) t_3^2-16 \nu_2 \nu_4+16 \nu_1 \nu_3}{(\nu_2^2 \nu_4 \nu_1 \nu_3-256 \nu_2) t_1 t_3^2+(16 \nu_1 \nu_3 \nu_4-256 \nu_2) t_1+(-16 \nu_1 \nu_2^2+256 \nu_1) t_3}.
            \end{equation}
        Using these substitutions for the last polynomial $p_{14}$ we have the following form
        \begin{equation}\label{eq20}
         (a_{22} t_1^2 t_3^2 + a_{20} t_1^2 + a_{02} t_3^2 + a_{00})(b_{22} t_1^2 t_3^2 + b_{20} t_1^2 + b_{02}t_3^2 + b_{00})=0,
        \end{equation}
        where
        \begin{align*}
            a_{22} = (\nu_1 \nu_2 \nu_3 \nu_4-256) (\nu_1 \nu_4-\nu_2 \nu_3), && b_{22} = \nu_2 \nu_4 (\nu_1 \nu_2 \nu_3 \nu_4-256),\\
            a_{20} = (\nu_1 \nu_3 \nu_4-16 \nu_2) (\nu_1 \nu_2 \nu_4-16 \nu_3), && b_{20} = 16 \nu_4 (\nu_1 \nu_3 \nu_4-16 \nu_2),\\
            a_{02} = (16 \nu_1 - \nu_2 \nu_3 \nu_4) (\nu_1 \nu_2 \nu_3 - 16 \nu_4), && b_{02} = 16 \nu_2 (\nu_1 \nu_2 \nu_3 - 16 \nu_4),\\
            a_{00} = 16 (\nu_1 \nu_3-\nu_2 \nu_4) (\nu_1 \nu_2-\nu_3 \nu_4), && b_{00} =  256 (\nu_1 \nu_3 - \nu_2 \nu_4).
        \end{align*}
        We have two families of solutions. The left polynomial has a one-dimensional family of solutions for variables $t_1, t_3$,
        from which we can determine $t_2,t_4$ via (\ref{eq23}) and
        (\ref{denom30}), as long as denominators in \cref{eq23}
        do not vanish.

\begin{remark}
We see that in the general case, the solution set can be described by five variables $(t_1, \nu_1,..,\nu_4)$. From condition \cref{eq10} we deduce that if $t_i \neq 0$ for all $i$ the
Kokotsakis mesh is geometrically determined by the four planar angles $(\delta_1,..,\delta_4)$ and two dihedral angles~$(\tau_1, \zeta_1)$, which gives us 6 degrees of freedom. It is remarkable that in contrast to the planar case of OI type, all four angles in the central quad can be chosen arbitrarily.
\end{remark}
        
        For a solution $(t_1, t_3)$ of the right polynomial in (\ref{eq20}), the further variables $t_2, t_4$ are obtained via
        \begin{equation}\label{eq27}
            t_2 = -\frac{\nu_2}{4} t_3, \quad t_4 = -\frac{\nu_4}{4} t_1.
        \end{equation} 
        Indeed, if we solve the second polynomial for $t_1^2$ and substitute this parametrization to \cref{eq23}, we obtain the above result. However, it contradicts our assumption \cref{eq26}.

        Now we suppose that \cref{eq27} is satisfied. Then the second column of $N$ is defined by non-zero elements $-4 \nu_2 (t_3^2 + 1) \ne 0,$ and $-4 \nu_4 (t_1^2 + 1) \ne 0$, since $\nu_i$ cannot be zero. So \cref{eq18} is equivalent to the system of $p_{23} = 0, p_{24} = 0$, which has the following form
        \begin{equation}\label{LinearCaseEquations}
            \begin{gathered}
            p_{23} = \nu_2 \nu_4(\nu_1 \nu_2 \nu_3 \nu_4 - 256) t_1^2 t_3^2 + 16 \nu_4 (\nu_1 \nu_3 \nu_4 - 16 \nu_2) t_1^2+ 16 \nu_2 (\nu_1 \nu_2 \nu_3 - 16 \nu_4) t_3^2+256(\nu_1 \nu_3 - \nu_2 \nu_4),\\
            p_{24} = \nu_1 \nu_4^2(\nu_2^2 - 16) t_1^2 t_3 + 16\nu_2(\nu_4^2 - 16) t_1 t_3^2 + 16 \nu_2(\nu_4^2 - 16) t_1 + 16 \nu_1 (\nu_2^2 - 16) t_3.
            \end{gathered}    
        \end{equation}

\subsection{Cylindrical Algebraic Decomposition}

The most general method for algebraic analysis of a system of polynomial equations and inequalities over the real field is Cylindrical Algebraic Decomposition. For a given set of polynomials, it decomposes the solution space into a finite number of connected semialgebraic sets on which every polynomial has a constant sign or vanishes identically. The original idea was introduced by George Collins \cite{Collins1975} as an efficient computational method for quantifier elimination. A modern description with applicable algorithms can be found in \cite{England2020}.

\textbf{Example}. Let us illustrate this methodology to the system obtained in \cref{LinearCaseEquations} which corresponds to the linear dependence case \cref{eq27}. We use the following order of variables $[\nu_1, \nu_2, \nu_3, \nu_4, t_1, t_3]$ with additional inequality $[\nu_1 \nu_2 \nu_3 \nu_4 \ne 0]$. Then Cylindrical Algebraic Decomposition yields 136 cells (which are semialgebraic sets). For brevity, we describe only one of them,
\begin{equation}\label{eq32}
\nu_1 = \frac{256\nu_2 \nu_4(t_1^2 + 1)(t_3^2 + 1)}{\nu_3(\nu_4^2 t_1^2 + 16)(\nu_2^2 t_3^2 + 16)}, \quad \nu_3 = \frac{-16\nu_4(t_1^2 + 1)}{(\nu_4^2 - 16)t_1 t_3},    
\end{equation}
$$
t_1 > 0, \quad t_3 < 0, \quad \nu_4 > 4, \quad  \nu_2 > 4.
$$

\textcolor{black}{Together with \cref{eq27} it forms a 4-parameter family of solutions for flexible \textcolor{black}{skew Kokotsakis meshes}.}

    \subsection{Construction}\label{subsec5.3}
    
    \begin{figure}[ht]
        
        \centering
        \begin{overpic}[width=1.0\textwidth]{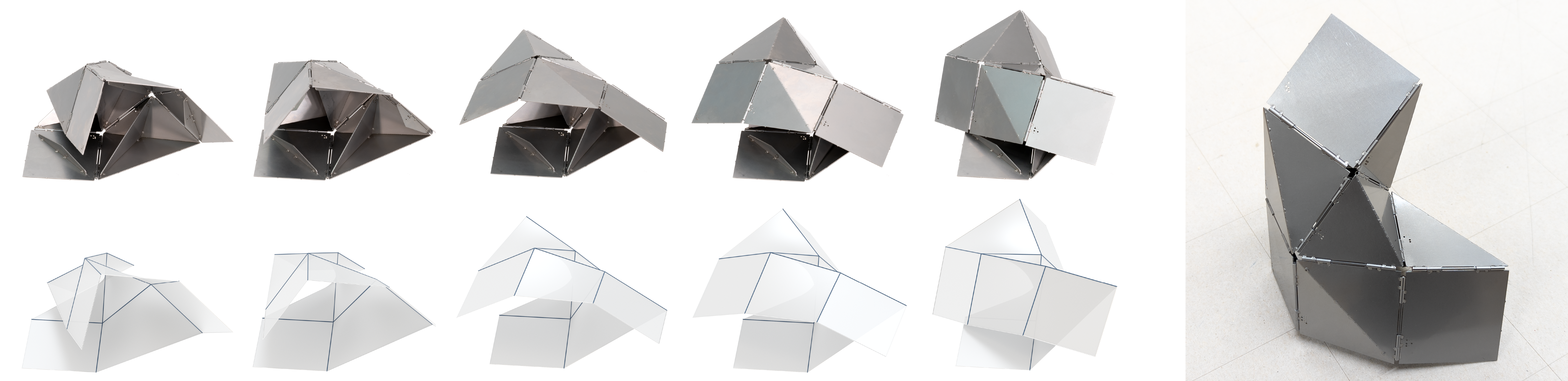}
        \put(89, 12.8){\contour{white}{$A_3$}}
        \put(88, 12){\Huge\textcolor{black}{.}}
        \put(84.8, 14.8){\contour{white}{$A_2$}}
        \put(82, 7.8){\Huge\textcolor{black}{.}}
        \put(80.75, 9.2){\contour{white}{$A_1$}}
        \put(85.7, 13.15){\Huge\textcolor{black}{.}}
        \put(90, 8.0){\contour{white}{$A_4$}}
        \put(89, 7.0){\Huge\textcolor{black}{.}}
        
        
        \end{overpic}

        \caption{Illustration of several configurations of our fabricated flexible Kokotsakis mesh. Photos and corresponding renderings are shown.} \label{fig:caigui1}
    \end{figure}   
    The example illustrated in Figure~\ref{fig:caigui1} is constructed\footnote{\textcolor{black}{The motion of the rendered examples has been obtained by numerically solving the Bricard equations.}} from the solution set \cref{eq32} \textcolor{black}{found in the previous section}, with the assumption that $t_i \neq 0$ and $\delta_i \neq \frac{\pi}{2}$. Firstly we observe that in the obtained cell, all $\nu_i$ are positive. We start 
    by choosing them as they correspond to planar angles in the central quad. Let us choose them as some positive integer numbers
     \begin{equation*}
        \nu_1 = 8, \quad \nu_2 = 16, \quad \nu_3 = 8, \quad \nu_4 = 16.
    \end{equation*}
    Formulas in \cref{eq32} immediately imply
    $$
    \begin{matrix}
        t_1 = \frac{\sqrt{14}}{14}, & t_2 = \frac{2\sqrt{14}}{7}, & t_3 = -\frac{\sqrt{14}}{14}, & t_4 = -\frac{2\sqrt{14}}{7}.
    \end{matrix}
    $$
    As $t_i \neq 0$, by \cref{elp-table} $\lambda_i = \mu_i = -1$ which is equivalent to
    \begin{equation*}
              \alpha_1 = \alpha_2 = \alpha_3 = \alpha_4  = \frac{\pi}{2}, \quad
            \gamma_1 = \gamma_2 = \gamma_3 = \gamma_4 = \frac{\pi}{2}.
    \end{equation*}
    Then, we can recover $\delta_i$ from $\nu_i$ as follows
    $$
    \begin{matrix}
        \delta_1 = \frac{\pi}{3}, & \delta_2 = \arccos\frac{1}{4}, & \delta_3 = \frac{\pi}{3}, & \delta_4 = \arccos\frac{1}{4}
    \end{matrix},
    $$
    and assuming the central skew face as two triangles with common edge $A_2 A_4$ between them, we can also recover $\tau_i$
    $$
    \begin{matrix}
        \tau_1 = \arccos\frac{1}{\sqrt{5}}, & \tau_2 = \arccos\frac{1}{\sqrt{5}}, & \tau_3 = \arccos\frac{1}{\sqrt{5}}, & \tau_4 = \arccos\frac{1}{\sqrt{5}}. 
    \end{matrix}
    $$
    Since $t_i = \tan(\tau_i + \zeta_i)$ we can find $\zeta_i$:
    $$
    \begin{matrix}
    \zeta_1 = \arctan\frac{\sqrt{14}}{14} - \arccos\frac{1}{\sqrt{5}},  & \zeta_2 = \arctan\frac{2\sqrt{14}}{7} - \arccos\frac{1}{\sqrt{5}},
    \end{matrix}
    $$
    $$
    \begin{matrix}
    \zeta_3 = \arctan(-\frac{\sqrt{14}}{14}) - \arccos\frac{1}{\sqrt{5}}, & \zeta_4 = \arctan(-\frac{2\sqrt{14}}{7}) - \arccos\frac{1}{\sqrt{5}}.
    \end{matrix}
    $$
    Finally, from the orthodiagonality condition, the last planar angles $\beta_i$ are
    \begin{equation*}
        \beta_1 = \frac{\pi}{2}, \quad \beta_2 = \frac{\pi}{2}, \quad \beta_3 = \frac{\pi}{2}, \quad \beta_4 = \frac{\pi}{2}.
    \end{equation*}

\begin{remark}
The central tetrahedron $A_1A_2A_3A_4$ of \cref{fig1} in this particular case is formed by two equilateral triangles perpendicular to each other.
\end{remark}

\subsection{Details on \textcolor{black}{the fabrication of the physical model}}
    \begin{figure}[ht]
        \centering
        {\includegraphics[width=1.0\textwidth]{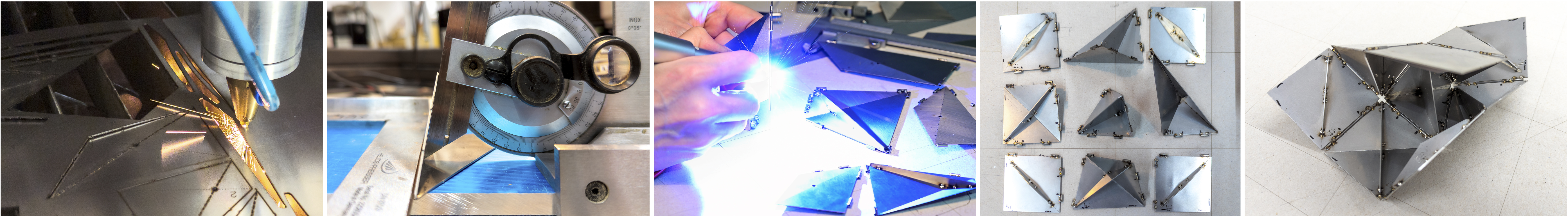}}
        \caption{\textcolor{black}{Illustration of the fabrication process of the flexible skew Kokotsakis mesh using stainless steel.} }\label{fig:florian1}
    \end{figure}
We demonstrate that the proposed mechanisms can be realized in practice without difficulties by building a small-scale prototype (ca. 180x180x180mm³). We built a precise model (see \cref{fig:florian1}) with very skew quads from the previous subsection.
Unlike Maleczek et al. \cite{maleczek2022rapid} and others, we decided against rapid prototyping and the use of plastics to avoid the large tolerances and inherent flexibility. We chose to use stainless steel to fabricate our model.

\textcolor{black}{The mechanism is built from a 0.8mm thick laser-cut hard-rolled stainless steel sheet (1.4404), CNC cut cold drawn stainless steel seamless capillary pipe (1.4301) with an outer diameter of 2.5mm and a wall thickness of 0.5mm, and straitened carbon spring steel wire (1.1200) with a diameter of 1.5mm. The laser-cut quads were manually bent until the desired angle was reached. V-grooves were ground into some quadrilaterals to facilitate large bending angles. Bracings and the tube cutoffs, forming the joint knuckles, were fusion pulse TIG welded to the bent quadrilaterals. The fit knuckle/pin in the joints is H10/h8 (knuckle inner diameter 1.5mm -0/+50µm, pin diameter 1.5mm +0/-14µm). In the longitudinal direction, the joint knuckles were pushed together with zero gap during welding. The resulting mechanism is stiff yet easy and smooth to move. It shows only one degree of motion and no backlash or wiggle. Its motion is solely a result of the geometry and not caused by any \textcolor{black}{material} flexibility, deformation, or loose fit.}
We have produced a video capturing the motion, which can be viewed on YouTube, \cite{kokotsakis23}.



\section{Conclusion}

\textcolor{black}{In this paper we made a first step in generalizing Izmestiev's method towards flexible skew Kokotsakis meshes. By doing so, we constructed the class of flexible skew Kokotsakis meshes of orthodiagonal involutive type. This remarkable class plays an important role in Izmestiev's classification. We were able to derive conditions for the skew case of OI type by studying the subdiagram similar to the planar case, though not explicitly mentioning it.}

Our belief is that all flexible skew Kokotsakis meshes may be classified similarly to Izmestiev's classification, potentially adding new subclasses to some families. However, as algebraic computations in the skew case become more complicated, we leave a complete classification for future work.

\section{Acknowledgements}

The authors are grateful to Caigui Jiang and Cheng Wang for their support with the graphical visualization of \textcolor{black}{flexible meshes}. \textcolor{black}{The valuable remarks of the anonymous reviewers are gratefully acknowledged.}. This work has been supported by KAUST baseline funding.

\bibliographystyle{unsrt}
\bibliography{main}

\appendix
\section{\textcolor{black}{Proof of the existence theorem}}
\label{apx:a}

\textcolor{black}{Set
\begin{equation*}
w_i=x_i+\frac{\mu_{i}}{x_i}, i=1,3; \quad w_j=x_j+\frac{\lambda_{j-1}}{x_j}, j=2,4; \quad t_n = \frac{2 F_n}{1 - F_n^2}, 1\leq n \leq 4;
\end{equation*}
$$
w_i'=y_i+\frac{\mu_{i-1}}{y_i}, i=1,3;\,\,\,\, w_j'=y_j+\frac{\lambda_{j}}{y_j}, j=2,4.
$$}
The flexibility of $(Q_1, Q_2, Q_3, Q_4)$ is equivalent to the system 
\begin{equation}\label{w&w'}
  w_1w_2=\nu_1, w_2'w_3'=\nu_2, w_3w_4=\nu_3, w_4'w_1'=\nu_4,
\end{equation}
where all $w_i,w_i'$ are the unknowns and $w_i'=k_i \frac{2 w_i + 4 t_i}{2 - t_i w_i}$ according to equation~\cref{eq28}. 
Belonging to the OI class implies that the system has infinitely many complex solutions. By solving the system, it is not hard to see that we can take any $w_i$ or $w_i'$ as a parameter and the others can be expressed rationally in this parameter. As a consequence, the system always has real solutions in $w_i, w_i'$. \textcolor{black}{We can recover real values for $x_i$ and $y_i$ if
\begin{equation}\label{cond_for_mu}
    |w_i| > 2\sqrt{\mu_i}, \quad |w'_i| > 2\sqrt{\mu_{i - 1}}, i = 1, 3; \quad |w_j| > 2\sqrt{\lambda_{j-1}}, \quad |w'_j| > 2\sqrt{\lambda_{j}}, i = 2, 4.
\end{equation}}
The global restriction might be required to make sure that system (\ref{w&w'}) admits real solutions in all $x_i,y_i$. In fact, since the relation between $(w_i,w_i')$ is actually induced by equation (\ref{eq8}), it is sufficient to only consider the real solutions for all $x_i$.

We first partition the involution factors into cyclically arranged sets $L_i$ as follows,
$$
L_1:=\{\mu_4,\mu_1\}, L_2:= \{\lambda_1,\lambda_2\}, L_3:=\{\mu_2,\mu_3\}, L_4:=\{\lambda_3,\lambda_4\}.
$$
According to table~\cref{elp-table}, the elements in each partition $L_i$ share the same sign $s_i$ of factors. 

Case 1: All $s_i$ are -1. This case is trivial, since we can recover real values of $x_i$ from any real value of $w_i$. In
this case, we always have a real solution.

Case 2: Only one  $s_i$ is $+1$. After relabeling we may assume $s_1=1$. Since there are no restrictions for $w_i$ ($i\neq 1$), we can just take $w_1$ as a parameter to solve the system and keep it away from small numbers $i$ order to get real solutions for~$x_1$ and hence for all $x_i$.

Case 3: Two adjacent $s_i$ are $+1$. After relabeling we may assume $s_1=s_2=1$. The existence of real solutions for all $x_i$ is simply reduced to the local restriction on $Q_1$, i.e. $\frac{\nu_1^2}{\lambda_1 \mu_1} > 16$.

Case 4: Two non-adjacent $s_i$ are $+1$. After relabeling we may assume $s_1=s_3=1$. There are no restrictions on $w_2$ and $w_4$ given that $s_2=s_4=-1$. \textcolor{black}{This is exactly the case in which $\lambda_i\mu_i<0$ for all $i$.}

\textcolor{black}{Suppose that $\mu_i > 0$.} Then according to equation~\cref{eq15} the problem is reduced to whether 
$$
k_2\frac{4t_2w_1+2\nu_1}{2w_1-\nu_1t_2}w_3'=\nu_2
$$
\textcolor{black}{admits real solutions in which both $|w_1|$ and $|w_3'|$ satisfy the conditions \cref{cond_for_mu}. More specifically, the following system of inequalities must have solutions (as intervals)
\begin{equation*}
    |w_1|> 2\sqrt{\mu_1}, \quad |w_3'|=\left|\frac{\nu_2(2w_1-\nu_1t_2)}{k_2(4t_2w_1+2\nu_1)}\right|> 2\sqrt{\mu_2}
\quad \text{or equivalently} \quad
|w_1|> 2\sqrt{\mu_1}, \quad \left|\frac{2w_1-\nu_1t_2}{4t_2w_1+2\nu_1}\right|> 2\sqrt{\mu_2}\left|\frac{k_2}{\nu_2}\right|.
\end{equation*}}
Clearly, the system has solutions when $t_2=0$. So we may assume $t_2 \ne 0$, i.e. $F_2\notin \{0,\pm\infty\}$ and hence $k_2=1$ by Lemma~\ref{lma3.4}.
Then we have
\begin{equation}\label{ineqs}
|w_1|> 2\sqrt{\mu_1}, \quad \left|\frac{2w_1-\nu_1t_2}{2t_2w_1+\nu_1}\right|> 4 \frac{\sqrt{\mu_2}}{|\nu_2|}.
\end{equation}
Let us  assume that (\ref{ineqs}) has no
solution. If we regard $w_1$ as a variable in $\mathbb{R}\text{P}^1\cong S^1$, then any interval such as $(a,b)$ or $(-\infty,a)\cup(b,+\infty)$ is just an arc of $S^1$ (or $\mathbb{R}\text{P}^1$). Set
$$
I:=\{w_1\in \mathbb{R}\text{P}^1:|w|> 2\sqrt{\mu_1}\}, \quad J:=\{w_1\in \mathbb{R}\text{P}^1:|w_1|> 4\frac{\sqrt{\mu_2}}{|\nu_2|}\}.
$$
\begin{figure}
\hfill
\begin{overpic}[width=0.5\textwidth]{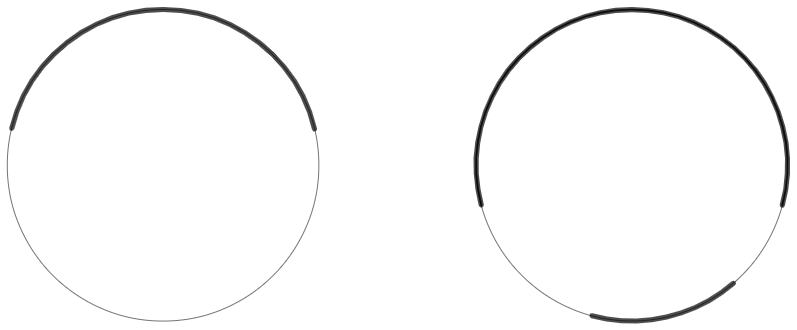}
    \put(24,37){\footnotesize\contour{white}{$I$}}
    \put(75,37){\footnotesize\contour{white}{$J$}}    
    \put(76,10){\footnotesize\contour{white}{$f(I)$}}
\end{overpic}
  \hfill{}
    \caption{Illustration of case 4 in the proof of Theorem~\ref{thm:main}.}
    \label{RP1}
\end{figure}
The M\"obius transformation
$$
f:\mathbb{R}\text{P}^1\rightarrow \mathbb{R}\text{P}^1, \quad f(w_1)=\frac{2w_1-\nu_1t_2}{2t_2w_1+\nu_1},
$$
maps an interval to an interval. According to our assumption, $f(I), J$ must be disjoint, see Figure ~\ref{RP1}. The end points of interval $f(I)$ are just $f(\pm 2\sqrt{\mu_1})$ given that $f$ is one to one. So by properly choosing from $w_1=\pm 2\sqrt{\mu_1}$ we can obtain 
$$
|f(w_1)| \leq \max(|f(\pm 2\sqrt{\mu_1})|) = \frac{4\sqrt{\mu_1}+|\nu_1t_2|}{|4|t_2|\sqrt{\mu_1}-|\nu_1||}\leq 4\sqrt{\mu_2}/|\nu_2|.
$$
On the other hand, $\infty \notin f(I)$ since $\infty \in J$. So we have $\frac{-\nu_1}{2t_2}=f^{-1}(\infty) \notin I$, i.e.
$$
4 |t_2|\sqrt{\mu_1}-|\nu_1| \geq 0.
$$
Since $4\sqrt{\mu_1}+|\nu_1t_2|, 4\sqrt{\mu_2}/|\nu_2|$ are always positive, the above two inequalities can be combined into a single one 
$$
4\sqrt{\mu_1}+|\nu_1t_2|\leq 4\sqrt{\mu_2}(4|t_2|\sqrt{\mu_1}-|\nu_1|)/|\nu_2|.
$$
Note that this inequality actually characterizes $f(I)\cap J=\varnothing$. This is because $f(\pm 2\sqrt{\mu_1})$ separate $\mathbb{R}\text{P}^1$ into two intervals, only one of which contains $\infty$. So $f(I)\cap J=\varnothing$ as long as $f(\pm 2\sqrt{\mu_1})\notin J$ and $\infty \notin f(I)$. In other words, system (\ref{ineqs}) has continuous solutions if and only if
$$
4\sqrt{\mu_1}+|\nu_1t_2|> 4\sqrt{\mu_2}(4|t_2|\sqrt{\mu_1}-|\nu_1|)/|\nu_2|,
$$
or equivalently,
$$
16\sqrt{\mu_1\mu_2}-|\nu_1\nu_2| < \frac{4|\nu_2|\sqrt{\mu_1}+4|\nu_1|\sqrt{\mu_2}}{|t_2|}.
$$

\textcolor{black}{In the case $\lambda_i > 0$ we consider the new linkage $(\tilde{Q_1}, \tilde{Q_2}, \tilde{Q_3}, \tilde{Q_4})$ obtained from $(Q_1, Q_2, Q_3, Q_4)$ by cyclic rotation such that $\tilde{Q}_{i} = Q_{i + 1}$. Note that under such a rotation we have the relabeling $\tilde{\alpha}_i = \gamma_{i + 1}$, $\tilde{\gamma}_{i} = \alpha_{i+1}$ and $\tilde{t}_{i} = t_{i + 1}$ which leads to the inequalities $\tilde{\mu}_i = \lambda_{i + 1} > 0$ and $\tilde{\lambda_i} = \mu_{i + 1} < 0$. Therefore, the condition for the existence of the real solutions for the new complex is \cref{global-existence-condition}. After returning to the initial labeling we have the desired inequality \cref{global-existence-condition1}.} It is the only case which requires a global restriction.

        Case 5: More than two $s_i$ equal +1. After relabeling we may assume $s_1=s_2=s_3=1$ and then we have $t_i = 0$ for all indices because of \cref{eq18}. It implies $F_i\in\{0, \infty\}$ for all $i$. In any of these cases, we have $w_i'=k_iw_i$
        where $k_i$ is constant. The local restriction $\frac{\nu_i^2}{\lambda_i \mu_i} > 16$ must be applied for all $i$.  We take $w_1$ as a parameter. Then in order to recover real values for $x_1$, the domain of $|w_1|$ is the interval $(2\sqrt{\mu_1}, +\infty)$.  The first two equations in \cref{w&w'} have the form 
        \begin{equation*}
            w_1 w_2 = \nu_1, \quad w_2' w_3' = k_2 k_3 w_2 w_3 = \nu_2,
        \end{equation*} and thus the domains of $|w_2|, |w_3|$ in order to recover real values for $x_2, x_3$ are 
        \begin{equation*}
            |w_2| \in \Big(2\sqrt{\lambda_1}, \frac{|\nu_1|}{2\sqrt{\mu_1}}\Big), \quad |w_3| \in \Big(\frac{2\sqrt{\mu_1}|\nu_2|}{|k_2 k_3\nu_1|}, \frac{|\nu_2|}{2|k_2 k_3|\sqrt{\lambda_1}}\Big) \cap (2\sqrt{\mu_3}, +\infty).
        \end{equation*}
        The last intersection is not empty since $|\nu_2| > 4|k_2 k_3|\sqrt{\lambda_1\mu_3}$ is equivalent to $|\nu_2| > 4\sqrt{\lambda_2 \mu_2}$ because of~\cref{eq10} as $|k_2 k_3|\sqrt{\lambda_1 \mu_3} = \sqrt{\lambda_2 \mu_2}$ for every $k_2, k_3$.
        
        Similarly, for the last two equations in (\ref{w&w'}), we start with $|w_3|$ as parameter and obtain the following domains
        \begin{equation*}
            |w_3| \in (2\sqrt{\mu_3}, +\infty), \quad |w_4| \in \Big(2\sqrt{\lambda_3}, \frac{\nu_3}{2\sqrt{\mu_3}}\Big), \quad |w_1| \in \Big(\frac{2\sqrt{\mu_3}|\nu_4|}{|k_1 k_4\nu_3|}, \frac{|\nu_4|}{2|k_1 k_4|\sqrt{\lambda_3}}\Big) \cap (2\sqrt{\mu_1}, +\infty),
        \end{equation*}~where the last intersection is also not empty.


\section{Deltoids and antideltoids}
\label{sec:appendix}

\subsection{OI type with two (anti)deltoids}
\begin{enumerate}
    \item Planar angles $(\alpha_i , \beta_i, \gamma_i, \delta_i)$, where $i = 1, 4$, satisfy the conditions:
    $$
    \begin{matrix}
    \cos(\alpha_i) \cos(\gamma_i) = \cos(\beta_i) \cos(\delta_i) & \alpha_i \pm \beta_i \pm \gamma_i \pm \delta_i \neq 0 \text{ mod } 2\pi
    \end{matrix}
    $$
    and planar angles $(\alpha_i, \beta_i, \gamma_i, \delta_i)$, where $i =2, 3$, both satisfy one of the conditions:
    \begin{equation*}
            \alpha_i = \beta_i, \quad \gamma_i = \delta_i \quad (\text{or} \quad \alpha_i = \pi - \beta_i, \quad \gamma_i = \pi - \delta_i);   
    \end{equation*}
    \item The couplings of adjacent quadrilaterals are compatible, see~\cref{def3.6};
    \item The set of parameters $(\nu_1, \xi_2, \xi_3, \nu_4, t_1, t_2, F_3, t_4)$ satisfies the following system(in the case of antideltoids the system coincides with the one below after changing $F_3$ to $-F_3$)
    \begin{equation*}\label{eq24}
        \begin{gathered}
            p_{12} = -2 \xi_3 \nu_4 t_1 t_2 t_4 + 2 \xi_2 \nu_4 t_1 + 8 \xi_3 t_2 + 8 \xi_2 t_4 + F_3(4 \nu_4 t_1 t_2 + \xi_2 \xi_3 \nu_4 t_1 t_4 + 16 t_2 t_4 - 4 \xi_2 \xi_3),\\
            p_{13} = \xi_3(-\nu_1 \xi_2 \nu_4 t_1 t_2 t_4 + 16  \xi_2 t_1 + 4 \nu_1 \xi_2 t_2 + 4 \xi_2 \nu_4 t_4 + F_3(32 t_1 t_2 + 2 \nu_1 \nu_4 t_1 t_4 + 8 \nu_4 t_2 t_4 - 8 \nu_1)),\\
            p_{14} =  2(16 \xi_2 - \nu_1 \xi_3 \nu_4) t_1 t_4 + 8(\nu_1 \xi_3 - \xi_2 \nu_4) + F_3((64 - \nu_1 \xi_2 \xi_3 \nu_4) t_1 t_2 t_4 + 4( \nu_1 \xi_2 \xi_3 - 4 \nu_4) t_2),\\
            p_{23} = (\nu_1 \xi_2 \nu_4 - 16 \xi_3) t_1 t_2 + 4 (\nu_1 \xi_2 - \xi_3 \nu_4) t_2 t_4 + F_3(2 (4 \xi_2 \xi_3 - \nu_1 \nu_4) t_1 + 2 (\xi_2 \xi_3 \nu_4 - 4 \nu_1) t_4),\\
            p_{24} = -64 t_1 t_2 t_4  + 4 \nu_1 \nu_4 t_1 + 16 \nu_4 t_2 + 16 \nu_1 t_4 + F_3(2 \nu_1 \xi_2 \nu_4 t_1 t_2 + 32 \xi_2 t_1 t_4 + 8 \nu_1 \xi_2 t_2 t_4 - 8 \xi_2 \nu_4),\\
            p_{34} = \nu_1 (-8 \xi_2 t_1 t_2 t_4 + 8\xi_3 t_1 + 2 \xi_2 \nu_4 t_2 + 2 \xi_3 \nu_4 t_4 + F_3(4 \xi_2 \xi_3 t_1 t_2 + 16 t_1 t_4 + \xi_2 \xi_3 \nu_4 t_2 t_4 - 4 \nu_4)).
        \end{gathered}    
    \end{equation*}
\end{enumerate}
\subsection{OI type with deltoid and antideltoid}\label{secA2}
This subclass has no counterpart in the planar case.  
\begin{enumerate}
     \item Planar angles $(\alpha_i , \beta_i, \gamma_i, \delta_i)$, $i = 1, 4$, satisfy the conditions:
    $$
    \begin{matrix}
    \cos(\alpha_i) \cos(\gamma_i) = \cos(\beta_i) \cos(\delta_i) & \alpha_i \pm \beta_i \pm \gamma_i \pm \delta_i \neq 0 \text{ mod } 2\pi
    \end{matrix}
    $$
    and planar angles $(\alpha_i, \beta_i, \gamma_i, \delta_i)$, where $i = 2, 3$, satisfy the conditions:
    \begin{equation*}
        \begin{matrix}
            \alpha_2 = \beta_2\text{, } & \gamma_2 = \delta_2\text{, } & \alpha_3 = \pi - \beta_3\text{, } & \gamma_3 = \pi - \delta_3
        \end{matrix}
    \end{equation*}
    \item The couplings of adjacent quadrilaterals are compatible, see~\cref{def3.6};
    \item\label{eq25} The set of parameters $(\nu_1, \xi_2, \xi_3, \nu_4, t_1, t_2, F_3, t_4)$ satisfies the system
    \begin{equation*}
        \begin{gathered}
            p_{12} = 2 \xi_3 \nu_4 t_1 t_2 F_3 t_4 - 4 \nu_4 t_1 t_2 + 2 \xi_2 \nu_4 t_1 F_3 + \xi_2 \xi_3 \nu_4 t_1 t_4 - 8 \xi_3 t_2 F_3 - 16 t_2 t_4 + 8 \xi_2 F_3 t_4 - 4 \xi_2 \xi_3,\\
            p_{13} = 32 t_1 t_2 F_3 t_4 + \nu_1 \xi_2 \nu_4 t_1 t_2 - 2 \nu_1 \nu_4 t_1 F_3 + 16 \xi_2 t_1 t_4 - 8 \nu_4 t_2 F_3 + 4 \nu_1 \xi_2 t_2 t_4 - 8 \nu_1 F_3 t_4 - 4 \xi_2 \nu_4,\\
            p_{14} = (\nu_1 \xi_2 \nu_4 + 16 \xi_3) t_1 t_2 F_3 + 4(\nu_1 \xi_2 + \xi_3 \nu_4) t_2 F_3 t_4 + 2(\nu_1 \nu_4 + 4 \xi_2 \xi_3) t_1 + 2(4\nu_1 + \xi_2 \xi_3 \nu_4) t_4,\\
            p_{23} = (\nu_1 \xi_2 \xi_3 \nu_4 + 64) t_1 t_2 t_4 - 2(16 \xi_2 + \nu_1 \xi_3 \nu_4) t_1 F_3 t_4 - 4(4 \nu_4 + \nu_1 \xi_2 \xi_3)t_2 + 8 (\xi_2 \nu_4 + \nu_1 \xi_3) F_3,\\
            p_{24} = \xi_3 (\nu_1 \xi_2 \nu_4 t_1 t_2 F_3 t_4 + 32 t_1 t_2 - 16 \xi_2 t_1 F_3 + 2 \nu_1 \nu_4 t_1 t_4 - 4 \nu_1 \xi_2 t_2 F_3 +  8 \nu_4 t_2 t_4 - 4 \xi_2 \nu_4 F_3 t_4 - 8 \nu_1),\\
            p_{34} = \nu_1 (8 \xi_2 t_1 t_2 F_3 t_4 - 4 \xi_2 \xi_3 t_1 t_2 + 8 \xi_3 t_1 F_3 + 16 t_1 t_4 - 2 \xi_2 \nu_4 t_2 F_3 - \xi_2 \xi_3 \nu_4 t_2 t_4 + 2 \xi_3 \nu_4 F_3 t_4 - 4 \nu_4).
        \end{gathered}
    \end{equation*}    
\end{enumerate}
\textbf{Example.}
Here we present the dihedral and planar angles for a flexible Kokotsakis mesh of OI type with deltoid and antideltoid. The example is constructed under the assumption that $t_1 t_2 t_4 \neq 0$, $F_3 \neq 0$ with the following angles
\begin{figure}[ht!]
        \centering
        \includegraphics[width=0.85\textwidth]{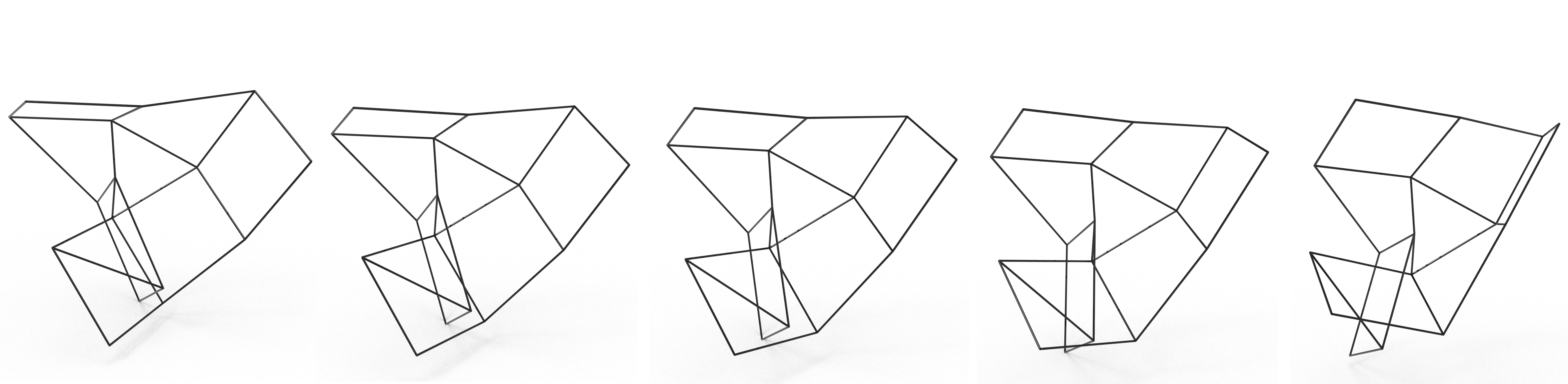}
        \caption{Some positions of a deltoid-antideltoid \textcolor{black}{mesh}. Due to the arising self-intersections we use a wireframe visualization.}
\end{figure}
\begin{equation*}
    \begin{matrix}
        \delta_1 = \frac{\pi}{3}, & \delta_2 = \arccos\frac{\sqrt{3}}{4}, & \delta_3 = \frac{2\pi}{3}, & \delta_4 = \arccos\frac{\sqrt{3}}{4},\\
    \end{matrix}
\end{equation*}
\begin{equation*}
    \begin{matrix}
        \tau_1 = \arccos\frac{3}{\sqrt{13}},& \tau_2 = \arccos\frac{3}{\sqrt{13}},& \tau_3 = \arccos\frac{1}{13},& \tau_4 = \arccos\frac{1}{13},
    \end{matrix}
\end{equation*}
\begin{equation*}
    \begin{matrix}
        t_1 = 10,& t_2 = \frac{-3597+2600\sqrt{3}}{5(39 + 12218\sqrt{3})}, & F_3 = \frac{1}{10}, & t_4 = \frac{-3597+2600\sqrt{3}}{-5(39 + 12218\sqrt{3})},
    \end{matrix}
\end{equation*}
\begin{equation*}
    \begin{matrix}
        \zeta_1 = \arctan t_1 - \tau_1,& \zeta_2 = \arctan t_2 - \tau_2,& \zeta_3 = 2\arctan F_3 - \tau_3,& \zeta_4 = \arctan t_4 - \tau_4,\\
    \end{matrix}
\end{equation*}
\begin{equation*}
    \alpha_1 = \alpha_2 = \alpha_3 = \alpha_4 = \frac{\pi}{2}, \quad \gamma_1 = \gamma_2 = \gamma_3 = \gamma_4 = \frac{\pi}{2}, \quad \beta_1 = \beta_2 = \beta_3 = \beta_4 = \frac{\pi}{2}.
\end{equation*}

\end{document}